\newif\ifsinum
\newif\ifesaim
\newif\ifhal
\newcolumntype{?}{!{\vrule width 1pt}}
\newtheorem{thm}{Theorem}[section]
\newtheorem{lem}[thm]{Lemma}
\newtheorem{defi}[thm]{Definition}
\newtheorem{asm}[thm]{Assumption}
\newtheorem{rem}[thm]{Remark}
\newtheorem{exam}[thm]{Example}
\numberwithin{equation}{section}
\newtheorem{remark}{Remark}
\newtheorem{remark}{Remark}
\newif\ifmargin
\newif\ifdetail
\newcommand{\vel}{\bm b}		
\newcommand{\ff}{f}			
\newcommand{\dimx}{d}
\newcommand{\cpf}[1]{C_{\mathrm{PF}, #1}}	
\newcommand{\cp}[1]{C_{\mathrm{P}, #1}}		
\newcommand{\cF}[1]{C_{\mathrm{P}, #1}}		
\newcommand{\cpcrv}{C_{\mathrm{P}, \vel, \dom}}
\newcommand{\cregl}{C_{\mathrm{cont,  PF}}}
\newcommand{\uu}{u}
\newcommand{\vv}{v}
\newcommand{\ww}{w}
\newcommand{\zz}{z}
\newcommand{\uh}{\uu_\h}
\newcommand{\vh}{\vv_\h}
\newcommand{\sh}{s_\h}
\newcommand{\sha}{s_\h^\nod}
\newcommand{\Err}{\eta}				
\newcommand{\ErrOsc}{\Err_{\mathrm{Osc}}}	
\newcommand{\ErrOscf}[1]{\Err_{\mathrm{Osc},  {#1} }}	
\newcommand{\ErrResk}[1]{\Err_{\mathrm{R}, {#1} }}	
\newcommand{\ErrReskall}{\Err_{\mathrm{R}}}	
\newcommand{\ErrRes}{\Err_{\mathrm{R} }}	
\newcommand{\ErrNCel}[1]{\Err_{\mathrm{NC}, {#1} }}
\newcommand{\ErrNC}{\Err_{\mathrm{NC}}}
\newcommand{\sumk}{\sum_{\el \in \triag}}
\newcommand{\triag}{\mathcal{T}_\h}
\newcommand{\trinod}{\mathcal{T}_\nod}
\newcommand{\sklt}{\mathcal{E}_\h}
\newcommand{\edg}{e}
\newcommand{\h}{h}
\newcommand{\hk}{\h_\el}
\newcommand{\dom}{\Omega}
\newcommand{\DD}{D}
\newcommand{\bdom}{\partial \dom}
\newcommand{\bdomp}{\partial_+ \dom}
\newcommand{\bdomm}{\partial_- \dom}
\newcommand{\bdomz}{\partial_0 \dom}
\newcommand{\nn}{\bm n}
\newcommand{\el}{K}
\newcommand{\nod}{{\bm{ a}}}
\newcommand{\nodh}{\mathcal{V}_\h}
\newcommand{\nodhi}{\mathcal{V}_\h^{\mathrm{int}}}
\newcommand{\nodhin}{\mathcal{V}_\h^{\bdomm}}
\newcommand{\nodhout}{\mathcal{V}_\h^{\bdomp}}
\newcommand{\nodel}{\mathcal{V}_\el}
\newcommand{\patch}{{\omega_\nod}}
\newcommand{\psia}{\psi_\nod}
\newcommand{\card}{\mathcal{N}}
\newcommand{\R}{\mathbb{R}}
\newcommand{\Cnt}{\mathcal{C}}
\newcommand{\Lp}[1]{L_{#1}}
\newcommand{\Ltwo}{\Lp{2}}
\newcommand{\Hs}[1]{H^{#1}}
\newcommand{\Hdiv}[1]{\bm{H}(\mathrm{div}, #1)}
\newcommand{\Hgrph}[1]{H(\linop, #1)}
\newcommand{\Hgrphad}[1]{H(\linop^*, #1)}
\newcommand{\Hgrphz}[1]{H_0(\linop, #1)}
\newcommand{\Hgrphadz}[1]{H_0(\linop^*, #1)}
\newcommand{\Hbplus}[1]{H^1_{+}(#1)}
\newcommand{\Hbminus}[1]{H^1_{-}(#1)}
\newcommand{\Hbplusdual}[1]{H^{1}_{+}(#1){'}}
\newcommand{\polp}[1]{\mathcal{P}^{#1}}
\newcommand{\poldeg}{k}
\newcommand{\poldegp}{k'}
\newcommand{\X}{X}			
\newcommand{\Y}{Y}
\newcommand{\Ya}{{{H^1_\#}(\patch)}}	
\newcommand{\Yap}{{\vel;\, \Ya'}}	
\newcommand{\tYa}{{H^1_*}}	
\newcommand{\Xh}{\X_\h}	
\newcommand{\Yh}{\Y_\h}	
\newcommand{\Xha}{\X_\h^\nod}	
\newcommand{\Yha}{\Y_\h^\nod}	
\newcommand{\Res}{\mathcal{R}}
\newcommand{\B}{\mathcal{B}}
\newcommand{\inprod}[2]{{#1} {\cdot} {#2}}		
\newcommand{\linop}{\mathcal{L}}			
\newcommand{\grad}{\nabla}
\newcommand{\divg}{\nabla {\cdot}}
\newcommand{\integprod}[3]{( #1, #2 )_{#3}}
\newcommand{\integprodBigg}[3]{\Bigg( #1, #2 \Bigg)_{#3}}
\newcommand{\pair}[2]{\langle #1, #2 \rangle}
\newcommand{\Ltwonorm}[2]{\Vert #1 \Vert_{#2}}
\newcommand{\LtwonormBigg}[2]{\Bigg\Vert #1 \Bigg\Vert_{#2}}
\newcommand{\proj}{\Pi}
\newcommand{\jump}[1]{ \llbracket #1 \rrbracket }	
\newcommand{\avg}[1]{\{ \! \! \{ #1 \}\! \! \} }
\newcommand{\lf}{\left}
\newcommand{\rt}{\right}
\newcommand{\defeq}{\vcentcolon =}
\newcommand{\eqdef}{=\vcentcolon}
\newcommand{\DG}{{dG}}
\newcommand{\CPG}{{PG1}}
\newcommand{\DPG}{{PG2}}
\definecolor{brown(web)}{rgb}{0.65, 0.16, 0.16}
\newcommand\ie{i.e.}
\newcommand\cf{cf.}
\newcommand\ie{i.e.}
\newcommand\cf{cf.}
\newcommand\eal{{\em et al. }}
\newcommand\eq{:=}
\newcommand\scp{{\cdot}} 
\begin{document}

\ifhal
  \title{Guaranteed and robust {$L_2$}-norm a posteriori error estimates for {1D} linear advection problems\thanks{This work was funded by the European Research Council (ERC) under the European Union’s Horizon 2020 research and innovation program (grant agreement No 647134 GATIPOR).}}

\renewcommand{\thefootnote}{\fnsymbol{footnote}}

\footnotetext[2]{Inria, 2 rue Simone Iff, 75589 Paris, France }
\footnotetext[3]{Universit\'e Paris-Est, CERMICS (ENPC), 77455 Marne-la-Vall\'ee,
France \\
\href{mailto:alexandre.ern@enpc.fr}{\texttt{alexandre.ern@enpc.fr}},
\href{mailto:martin.vohralik@inria.fr}{\texttt{martin.vohralik@inria.fr}}, 
\href{mailto:seyed-mohammad.zakerzadeh@inria.fr}{\texttt{seyed-mohammad.zakerzadeh@inria.fr}}
}

%

\author{ Alexandre Ern\footnotemark[3] \footnotemark[2] \and Martin Vohral\'ik\footnotemark[2] \footnotemark[3] \and  Mohammad Zakerzadeh\footnotemark[2] \footnotemark[3]}

\maketitle 
\begin{abstract}
We propose a reconstruction-based a posteriori error estimate for linear advection problems in one space dimension. In our framework, a stable variational ultra-weak formulation is adopted, and the equivalence of the $\Ltwo$-norm of the error with the dual graph norm of the residual is established. This dual norm is showed to be localizable over vertex-based patch subdomains of the computational domain under the condition of the orthogonality of the residual to the piecewise affine hat functions. We show that this condition is valid for some well-known numerical methods including continuous/discontinuous Petrov--Galerkin and discontinuous Galerkin methods. Consequently, a well-posed local problem on each patch is identified, which leads to a global conforming reconstruction of the discrete solution. We prove that this reconstruction provides a guaranteed upper bound on the $\Ltwo$ error. Moreover, up to a constant, it also gives local lower bounds on the $\Ltwo$ error, where the generic constant is proven to be independent of mesh-refinement, polynomial degree of the approximation, and the advective velocity. This leads to robustness of our estimates with respect to the advection as well as the polynomial degree. All the above properties are verified in a series of numerical experiments, additionally leading to asymptotic exactness. Motivated by these results, we finally propose a heuristic extension of our methodology to any space dimension, achieved by solving local least-squares problems on vertex-based patches. Though not anymore guaranteed, the resulting error indicator is numerically robust with respect to both advection velocity and polynomial degree, for a collection of two-dimensional test cases including discontinuous solutions. 
\end{abstract}%

\bigskip

\noindent{\bf Key words:} linear advection problem; discontinuous Galerkin method; Petrov--Galerkin method; a posteriori error estimate; local efficiency;  advection robustness;  polynomial-degree robustness
\else
    \ifesaim
      \input{ex_shared_m2an}
    \else
      \input{ex_shared_sinum}
    \fi
\fi


%


\section{Introduction}
This work deals with a linear advection equation of the form: find $\uu: \dom \subset \R^\dimx \rightarrow \R$ such that
\begin{subequations}
  \label{Eq::adv_multiD}
  \begin{align}
    \inprod{\vel}{\grad \uu} = \ff, & \qquad \text{ in } \dom,   \\
    \uu = 0,                        & \qquad \text{ on } \bdomm.
  \end{align}
\end{subequations}
The velocity field $\vel \in \Cnt^1(\overline \dom; \R^\dimx)$, $\vel \neq 0$,  is considered to be divergence-free and we take into account a general source term $\ff \in \Ltwo(\dom)$.
The inflow, outflow, and characteristic parts of the boundary are denoted by $\bdomm,\bdomp$, and $\bdomz$, respectively, with the definitions
$$ \partial_\pm \dom \defeq \{x \in \bdom: \pm \vel(x) \scp \nn(x) > 0  \}, \qquad \bdomz \defeq \{x \in \bdom: \vel(x) \scp \nn(x) = 0  \}.$$
In the main body of the paper, we focus on the one-dimensional case $\dimx = 1$, where $\dom \subset \R$ is a bounded simply connected interval; then $\vel$ is a constant scalar. We keep the notations in multi-dimensional form in order to be applicable when we discuss extensions of our results to the multi-dimensional case.

The a posteriori error analysis for  problem~\eqref{Eq::adv_multiD} admits a range of functional frameworks and consequently different norms in which the error can be measured. Our goal is to derive an $\Ltwo$-norm error estimate of the form
\begin{equation}
  \label{Eq::general_up}
  \Vert \uu - \uh \Vert_{\Ltwo(\dom)} \leq \Err,
\end{equation}
where $\uu$ is the weak solution of~\eqref{Eq::adv_multiD} in $\Ltwo(\dom)$, $\uh$ is its numerical approximation, and $\Err$ is an a posteriori error estimator {\em fully computable} from $\uh$ by some {\em local procedure}. We seek to have a bound that is {\em guaranteed}, \ie, featuring no unknown constant, in contrast to {\em reliability} where a bound up to a generic constant is sufficient.
We develop a unified framework treating several classical numerical methods at once. Importantly, we also prove a converse estimate to~\eqref{Eq::general_up} in the form
\begin{equation}
  \label{Eq::general_low}
  \Err \leq C \Vert \uu - \uh \Vert_{\Ltwo(\dom)} + \text{data oscillation}.
\end{equation}
This is called \emph{global efficiency} and yields equivalence between the incomputable error $\Vert \uu - \uh \Vert_{\Ltwo(\dom)}$ and the computable estimator $\Err$ up to the data oscillation term that vanishes for piecewise polynomial datum $\ff$ and that is of higher order than the error for piecewise smooth datum $\ff$. Crucially, in our developments, the generic constant $C$ in~\eqref{Eq::general_low} only depends on the mesh shape regularity, requesting for $d=1$ each two neighboring elements to be of comparable size. In particular, $C$ is independent of the problem parameters $\vel$ and $\ff$ as well as of the polynomial degree of the approximation $\poldeg$, yielding both data- and polynomial-degree-\emph{robustness}. We actually also show local efficiency, \ie, a localized version of~\eqref{Eq::general_low}, which is highly desirable on the practical side in view of adaptive mesh refinement.

To achieve the above-mentioned goals, we start with the ultra-weak variational formulation at the infinite-dimensional level, where the solution lies in the $\Ltwo(\dom)$ trial space and the test space is formed by $\Hs{1}(\dom)$ functions taking zero value at the outflow boundary. In this setting, we prove the equality of the $\Ltwo$-norm of the error with the dual graph norm (relying on $\Vert \inprod{\vel}{\grad (\cdot)}\Vert_{\Ltwo(\dom)}$) of the residual.
In the one-dimensional case, we are able to prove that the global dual norm can be localized over vertex-based patches of elements under an orthogonality condition against the hat basis functions. Consequently, suitable discrete local problems posed over these patches are identified which lead to local reconstructions $\sha$ combined into a global reconstruction $\sh$ such that $\Vert \uh - \sh\Vert_{\Ltwo(\dom)}$ forms the main ingredient of the estimator $\Err$ satisfying~\eqref{Eq::general_up} and~\eqref{Eq::general_low}.



Let us recall some important contributions to a posteriori error estimation for problem~\eqref{Eq::adv_multiD}. Bey and Oden in~\cite{bey1996hp} proposed an a posteriori error estimate for a discontinuous Galerkin ({\DG}) formulation of the multi-dimensional advection--reaction problem. In this framework, two infinite-dimensional problems have to be solved on each mesh element; one to obtain the lower bound on the error and one for the upper bound, in two different and inequivalent weighted energy norms. This gives estimates similar to~\eqref{Eq::general_up} and \eqref{Eq::general_low}, but for two different estimators and  in two different norms of the error. Additionally, one cannot solve analytically the infinite-dimensional elementwise problems, and, in practice, one needs to approximate them by some higher-order finite element approximation. Hence, neither simultaneous reliability and efficiency, nor robustness, are granted.


S\"uli in~\cite{suli1999posteriori} applied the $\Hs{1}$-stability result of Tartakoff~\cite{tartakoff1972regularity} to the adjoint problem of~\eqref{Eq::adv_multiD} (with the presence of the reaction term and in the multi-dimensional case), and obtained a global reliable upper bound on the $\Hs{-1}$-norm of the error in terms of the $\Ltwo$-norm of the residual for a weak formulation of~\eqref{Eq::adv_multiD} with distinct trial and test spaces. He further turned this bound into a reliable $\Hs{-1}$-norm a posteriori error indicator for the streamline-diffusion finite element and the cell-vertex finite volume methods.
However, neither the efficiency nor the robustness of this error indicator are discussed.
Furthermore, in~\cite{suli1999posteriori} by S\"uli  and in~\cite{houston1999posteriori} by S\"uli and Houston, an analysis of the multi-dimensional advection--reaction problem in the graph space equipped with the full norm $\Ltwonorm{\cdot}{{\Ltwo(\dom)}} + \Ltwonorm{\inprod{\vel}{\grad (\cdot)}}{{\Ltwo(\dom)}}$,  is provided. This functional setting provides the equivalence between the $\Ltwo$-norm of the error and the dual graph norm of the residual up to some generic constants, which is a weaker error--residual equivalence result compared to what we establish in the current work, see Theorem~\ref{Thm::Err_Res} below, upon replacing the full graph norm by $\Ltwonorm{\inprod{\vel}{\grad (\cdot)}}{{\Ltwo(\dom)}}$ only.
Overall, no complete reliability, efficiency, and robustness results of the form~\eqref{Eq::general_up}--\eqref{Eq::general_low} are obtained.




Becker \eal in~\cite{becker2013reconstruction}, derived reconstruction-based error estimators for the advection problem~\eqref{Eq::adv_multiD} in two space dimensions. An $\Hdiv{\dom}$-conforming reconstruction is proposed for the flux vector $\vel \uu$  (instead of $\uu$ in the present work) which is designed to produce a guaranteed upper bound on the error measured in some dual norm of the advection operator. A unified framework is built, covering the {\DG} and conforming finite element methods with/without stabilization terms. This dual norm is hard to evaluate even for a known exact solution, and, in practice, the authors replace it by the $\Ltwo$-norm, so that the guaranteed upper bound property is eventually lost. Proofs of efficiency or robustness are not given, but optimal convergence orders of the estimator are observed in numerical experiments.
It is worth mentioning that, restricted to one space dimension, the dual norm of~\cite{becker2013reconstruction} reduces to the weak graph norm we employ. Our contribution in this respect consists in the proofs of~\eqref{Eq::general_up} and~\eqref{Eq::general_low}, not given in~\cite{becker2013reconstruction} (where, recall, two space dimensions are treated.)


In a recent result by Georgoulis \eal in \cite{georgoulis2019posteriori},  the authors used the reconstruction proposed by Makridakis and Nochetto in~\cite{makridakis2006posteriori} for a {\DG} approximation and provided a reliable upper bound on the error in the energy norm for one-dimensional advection--diffusion--reaction problems, as well as a reliable $\Ltwo$-norm estimate for the  problem~\eqref{Eq::adv_multiD} in one space dimension. Though a proof of~\eqref{Eq::general_low} is not given, efficiency and robustness are numerically observed.
One might also note the earlier work of these authors~\cite{georgoulis2014error}, dedicated to the two-dimensional advection--reaction problem with a similar reconstruction. In that work, a reliable bound on the energy norm of the error is presented, though again without a theoretical elaboration on the efficiency and robustness.


Finally, let us mention the recent result of Dahmen and Stevenson in~\cite{dahmen2019adaptive} where the authors provide a posteriori error estimates for the discontinuous Petrov--Galerkin method tailored to the transport equations in two space dimensions.
The equivalence of the errors of the bulk and skeleton quantities with the dual norm of the residual is established. This dual norm is later  approximated by some equivalent yet computable indicator. The absorbed constants translate into a constant $C$ in~\eqref{Eq::general_low} which depends on the advective field $\vel$ and the polynomial degree of approximation, therefore precluding the robustness of the error lower bound.

We also mention that in the case of advection--diffusion(--reaction) problems, other approaches were previously considered to obtain robustness with respect to the advective field. Among them, Verf{\"u}rth~\cite{verfurth2005robust} proposed to augment the energy norm by a dual norm coming from the skew-symmetric part of the differential operator, and Sangalli~\cite{sangalli2004analysis,sangalli2005uniform} used interpolated spaces and a fractional-order norm for the advective term. Extensions of these approaches can be found in~\cite{sangalli2008robust, schotzau2009robust, ern2010guaranteed}. However, the above results are not applicable when the diffusion parameter vanishes, \ie, as the advection--diffusion problem reduces to~\eqref{Eq::adv_multiD} because the diffusive part of the operator is needed to evaluate the dual norm.

We treat problem~\eqref{Eq::adv_multiD} in one space dimension in Sections~\ref{Sec::inf_sup}--\ref{sec:experiments}.
Section~\ref{Sec::inf_sup} deals with the functional settings, whereby adopting the ultra-weak variational formulation. We prove, in particular, the equality of the $\Ltwo$-norm of the error and the dual norm of the residual. Section~\ref{Sec::Example} introduces some numerical schemes for approximating~\eqref{Eq::adv_multiD}. Section~\ref{Sec::localization} discusses the localization of the dual norm of the residual over vertex-based patches, showing in particular that this is possible for the schemes discussed in Section~\ref{Sec::Example}. In Section~\ref{Sec::Local_recons}, we present our local patchwise reconstruction. Sections~\ref{Sec::upper_bound} and~\ref{Sec::efficiency} then present the proofs for the upper and lower bounds as well as robustness in the form of~\eqref{Eq::general_up}--\eqref{Eq::general_low}. Section~\ref{sec:experiments} then contains results of several numerical experiments to illustrate the developed theory. Finally, in Section~\ref{Sec::extend}, we {consider the advection problem~\eqref{Eq::adv_multiD} in multiple space dimensions and derive a heuristic extension of our methodology to this case. Although we cannot prove here the guaranteed upper bound, (local) efficiency, and robustness, numerical experiments indicate appreciable properties of the derived estimates also in this case.

\section{Abstract framework}
\label{Sec::inf_sup}
We start with the presentation of the abstract framework.
\subsection{Spaces}
In the one-dimensional case, the constraint of $\vel$ being a non-zero divergence-free field is translated to $\vel$ being a constant nonzero scalar. Consequently, we are lead to work with the spaces
\begin{equation} \label{eq_Hpm}
  \Hbminus{\dom}    = \lf\{ \ww \in \Hs{1}(\dom),  \ww    = 0, \text{ on } {\bdomm} \rt\},  \qquad
  \Hbplus{\dom}  = \lf\{ \ww \in \Hs{1}(\dom), \ww  = 0, \text{ on } {\bdomp}  \rt\}.
\end{equation}
The trace operator in these spaces is well-defined and the following \emph{integration-by-parts} formula holds:
\begin{equation}
  \label{Eq::IBP}
  \integprod{\vv}{\inprod{\vel}{\grad \ww}}{\dom} + \integprod{\inprod{\vel}{\grad \vv}}{\ww}{\dom} = \integprod{\inprod{\vel}{\nn} \vv}{\ww}{\bdom}, \qquad \forall \vv, \ww \in \Hs{1}(\dom),
\end{equation}
where the notation $\integprod{\vv}{\ww}{\DD} \defeq \int_\DD \vv \ww$ is used for an open subdomain $\DD \subseteq \dom$ or its boundary $\partial \DD$ and for integrable functions $\vv$ and $\ww$.
Henceforth, $\Ltwonorm{\vv}{\DD}$ denotes the norm $\Vert \vv \Vert_{\Ltwo(\DD)} = \sqrt{\integprod{\vv}{\vv}{\DD}}$. We will drop the subscript when $\DD = \dom$.

\subsection{Poincar\'e inequalities}
The Poincar\'e inequality states that
\begin{subequations}\label{Eq::PF}
\begin{equation}
  \label{Eq::Poincare}
  \Ltwonorm{ \vv - \bar \vv}{\DD} \leq \h_\DD  \cp{\DD} \Ltwonorm{ \grad \vv }{\DD}, \qquad \forall \vv \in \Hs{1}(\DD),
\end{equation}
with $\cp{\DD} > 0$ a generic constant, in particular equal to $1/\pi$ for convex $\DD \subset \Omega$. Here $\bar \vv$ is the mean value of $\vv$ over $\DD$ defined as $\bar \vv = \integprod{\vv}{1}{\DD}/\vert \DD \vert$ and  $\h_\DD$ is the diameter of $\DD$. Similarly, another Poincar\'e inequality (sometimes called Friedrichs inequality) states that
\begin{equation}
  \label{Eq::Friedrichs}
  \Ltwonorm{ \vv }{\DD} \leq \h_\DD  \cF{\DD, \partial_0 \DD} \Ltwonorm{ \grad \vv }{\DD}, \qquad \forall \vv \in \lf \{ \Hs{1}(\DD), \vv \vert_{\partial_0 \DD} = 0, \vert \partial_0 \DD \vert \neq 0  \rt \},
\end{equation}\end{subequations}
where $\partial_0 \DD \subset \partial \DD$; typically $\cF{\DD, \partial_0 \DD} = 1$.
Henceforth, we will use $\cpf{\DD}$ as a general notation for both $\cp{D}$ and $\cF{D, \partial_0 D}$. It follows from the above that for a one-dimensional interval $D$, $\cpf{D}$ can be taken as $1$.

\subsection{Variational formulation and residual}
The variational framework hinges upon an appropriate choice of the trial and test spaces and their corresponding norms.  In particular, it turns out natural to work on spaces well-suited to the non-symmetric structure of the problem. Here we consider Hilbert spaces (non-symmetric formulations in Banach spaces can be found in~\cite{cantin2017well,muga2018discrete}).

The (usual) weak formulation of~\eqref{Eq::adv_multiD} reads: find $\uu \in \Hbminus{\dom}$ such that
  \begin{equation}
    \label{Eq::weak_one}
    \integprod{\inprod{\vel}{\grad \uu}}{\vv}{} = \integprod{\ff}{\vv}{}, \qquad \forall \vv \in \Ltwo(\dom).
  \end{equation}
It is classically well-posed as one might confer with~\cite{friedrichs1958symmetric}, \cite{lax1960local}, and \cite[Proposition 6]{suli1999posteriori}, \cf also \cite{ern2006discontinuous} and \cite[Rem. 2.2]{dahmen2012adaptive}.  Here, we rather adopt the so-called \emph{ultra-weak} formulation of problem~\eqref{Eq::adv_multiD} where the bilinear form is obtained by casting the derivatives on the test function, using integration-by-parts. It reads: find $\uu \in \Ltwo(\dom)$ such that
\begin{equation}
  \label{Eq::weak_two}
  - \integprod{\uu}{\inprod{\vel}{\grad \vv}}{}  =  \integprod{\ff}{ \vv}{}, \qquad \forall \vv \in \Hbplus{\dom}.
\end{equation}
The well-posedness of~\eqref{Eq::weak_two} can be shown by inf--sup arguments (\cf~\cite[Theorem 2.6]{ern2006discontinuous} and~\cite[Theorem 2.4]{dahmen2012adaptive}).


Denote by $\Hbplusdual{\dom}$ the dual space to $\Hbplus{\dom}$. For an arbitrary $\uh \in \Ltwo(\dom)$, the formulation~\eqref{Eq::weak_two} leads to the definition of the residual $\Res(\uh)$, a bounded linear functional on $\Hbplusdual{\dom}$, by
\begin{equation}
  \label{Eq::Res_two}
  \pair{\Res(\uh)}{\vv} \defeq (\ff, \vv) + (\uh, \inprod{\vel}{\grad \vv}), \qquad \forall \vv \in \Hbplus{\dom}.
\end{equation}
We define its velocity-scaled dual norm by
\begin{equation}
  \label{Eq::Res_two_norm}
  \Vert \Res(\uh) \Vert_{\vel; \, \Hbplusdual{\dom}} \defeq \sup_{{\vv \in \Hbplus{\dom}\setminus \{0\}}} \frac{\pair{\Res(\uh)}{\vv}}{{\Ltwonorm{\inprod{\vel}{\grad \vv}}{}}}.
\end{equation}


\subsection{Error-residual equivalence}
In this section, we present an important connection between the $\Ltwo(\dom)$-norm of the error and the residual {norm~\eqref{Eq::Res_two_norm}}:
\begin{thm}[error-residual equivalence]
  \label{Thm::Err_Res} Let $\uu$ be the ultra-weak solution  of~\eqref{Eq::weak_two}. Then
  \[
    \Ltwonorm{\uu - \uh}{} = \Vert \Res(\uh) \Vert_{\vel; \, \Hbplusdual{\dom}} \qquad \forall \uh \in \Ltwo(\dom).
  \]
\end{thm}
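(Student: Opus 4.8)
The plan is to turn the velocity-scaled dual norm~\eqref{Eq::Res_two_norm} into the standard $\Ltwo(\dom)$-duality characterization of the error, exploiting the fact that in one dimension the advection operator $\vv \mapsto \inprod{\vel}{\grad \vv}$ maps $\Hbplus{\dom}$ \emph{onto} $\Ltwo(\dom)$. First I would rewrite the residual in terms of the error. Since $\uu$ solves the ultra-weak formulation~\eqref{Eq::weak_two}, we have $\integprod{\ff}{\vv}{} = - \integprod{\uu}{\inprod{\vel}{\grad \vv}}{}$ for every $\vv \in \Hbplus{\dom}$; substituting this into the definition~\eqref{Eq::Res_two} of $\Res(\uh)$ gives
\[
  \pair{\Res(\uh)}{\vv} = \integprod{\uh - \uu}{\inprod{\vel}{\grad \vv}}{}, \qquad \forall \vv \in \Hbplus{\dom}.
\]
Thus the numerator appearing in~\eqref{Eq::Res_two_norm} is nothing but the $\Ltwo(\dom)$ inner product of the (signed) error with $\inprod{\vel}{\grad \vv}$, which is precisely the quantity measured in the denominator.

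For the inequality $\Vert \Res(\uh) \Vert_{\vel; \, \Hbplusdual{\dom}} \le \Ltwonorm{\uu - \uh}{}$ I would simply apply the Cauchy--Schwarz inequality in $\Ltwo(\dom)$ to the reformulated numerator, obtaining $\vert \pair{\Res(\uh)}{\vv} \vert \le \Ltwonorm{\uu - \uh}{} \, \Ltwonorm{\inprod{\vel}{\grad \vv}}{}$, then divide by $\Ltwonorm{\inprod{\vel}{\grad \vv}}{}$ and pass to the supremum over $\vv \in \Hbplus{\dom} \setminus \{0\}$.

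The reverse inequality is the crux, and I expect the construction of a maximizing test function to be the main point. In one dimension $\vel$ is a nonzero constant scalar, so for any $g \in \Ltwo(\dom)$ the antiderivative of $g/\vel$ normalized by the homogeneous condition at the outflow boundary $\bdomp$ (integrating from the outflow endpoint) defines a function in $\Hs{1}(\dom)$ that vanishes on $\bdomp$, hence lies in $\Hbplus{\dom}$, and satisfies $\inprod{\vel}{\grad \vv} = g$. Choosing $g \defeq \uh - \uu$ produces an admissible $\vv^\star \in \Hbplus{\dom}$ with $\inprod{\vel}{\grad \vv^\star} = \uh - \uu$; inserting it into the reformulated identity gives $\pair{\Res(\uh)}{\vv^\star} = \Ltwonorm{\uu - \uh}{}^2$ while $\Ltwonorm{\inprod{\vel}{\grad \vv^\star}}{} = \Ltwonorm{\uu - \uh}{}$, so the corresponding ratio in~\eqref{Eq::Res_two_norm} equals exactly $\Ltwonorm{\uu - \uh}{}$. (The degenerate case $\uh = \uu$ is trivial, both sides vanishing.) Combining the two bounds yields the asserted equality.

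The only delicate point is verifying that the constructed antiderivative genuinely belongs to $\Hbplus{\dom}$, i.e.\ is $\Hs{1}$ with the correct zero trace at the outflow; this is exactly the surjectivity of $\vv \mapsto \inprod{\vel}{\grad \vv}$ that is special to $\dimx = 1$ and that makes the equivalence constant equal to $1$. In higher dimensions this operator is no longer onto $\Ltwo(\dom)$ and only a two-sided estimate with nontrivial constants survives, consistent with the remark following the theorem about~\cite{suli1999posteriori,houston1999posteriori}.
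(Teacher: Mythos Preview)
Your proof is correct and follows essentially the same idea as the paper: both rest on the fact that $\vv \mapsto \inprod{\vel}{\grad \vv}$ is a bijection from $\Hbplus{\dom}$ onto $\Ltwo(\dom)$, so that the supremum in~\eqref{Eq::Res_two_norm} reduces to the $\Ltwo$-duality characterization of $\Ltwonorm{\uu-\uh}{}$. The paper obtains this bijection by invoking well-posedness of the weak formulation~\eqref{Eq::weak_one} for the field $-\vel$ and then performs a change of variable in the supremum in one stroke, whereas you construct the inverse explicitly as an antiderivative and split into two inequalities via Cauchy--Schwarz and a maximizing test function; the content is identical.
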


\begin{proof}
  The well-posedness of the weak formulation~\eqref{Eq::weak_one}, for the velocity field  $-\vel$, implies that for all $\vv \in \Ltwo(\dom)$, there exists a unique $\zz \in \Hbplus{\dom}$ such that
  \begin{equation*}
    -\integprod{\inprod{\vel}{\grad \zz}}{\ww}{}=\integprod{\vv}{\ww}{} \qquad \forall \ww \in \Ltwo(\dom).
  \end{equation*}
  This clearly gives $\Ltwonorm{\inprod{\vel}{\grad \zz}}{} = \Ltwonorm{\vv}{}$. Hence, for any $\ww \in \Ltwo(\dom)$, we have
  \begin{align*}
    \Ltwonorm{{\ww}}{} = \sup_{\vv \in \Ltwo(\dom) \setminus \{ 0 \} } \dfrac{\integprod{ {\ww}}{ \vv}{}}{\Ltwonorm{\vv}{}} = \sup_{\zz \in \Hbplus{\dom} \setminus \{0\}} \dfrac{ -\integprod{ {\ww}}{ \inprod{\vel}{\grad \zz}}{}}{\Ltwonorm{\inprod{\vel}{\grad \zz}}{}},
  \end{align*}
  and the claim follows by the choice $\ww = \uu - \uh$ and using the definitions~\eqref{Eq::weak_two} and~\eqref{Eq::Res_two}.
\end{proof}
Compared to the similar  equivalence provided in~\cite[Theorem 3.3]{houston1999posteriori}, Theorem~\ref{Thm::Err_Res} shows a form of equality  which declares the optimality of the chosen spaces and  norms.
This is advantageous for the sharpness of the a posteriori error estimation.

\section{Examples of numerical methods}
\label{Sec::Example}
%

Let $\triag = \{ \el \}$ be a mesh of $\dom$, \ie, a division of the one-dimensional domain $\dom$ into non-overlapping intervals covering $\dom$, shape regular in the sense that two neighboring intervals are of comparable size, up to a constant $\kappa_{\triag}$. Let us denote $\hk \defeq  \mathrm{diam}(\el)$ and $\h \defeq \max_{\el \in \triag} \hk$.
We also denote by $\sklt \defeq \cup_{\el \in \triag} \partial \el$  the skeleton of the triangulation $\triag$, coinciding with the set of mesh vertices $\nodh$ in the present one-dimensional case. Moreover, we need to consider the decompositions $\sklt = \sklt^{\mathrm{int}} \cup \sklt^{\mathrm{bnd}}$ into internal and boundary faces and $\nodh = \nodhi \cup \nodhin \cup \nodhout$ into internal, inflow, and outflow vertices, so that in the one-dimensional case $\sklt = \nodh$ and $\sklt^{\mathrm{bnd}} =  \nodhin \cup \nodhout$.
Let $\polp{\poldeg}(\triag)$ denote piecewise polynomial functions of at most degree $\poldeg$ on the mesh $\triag$. The following three numerical methods are classical examples of discretizations of~\eqref{Eq::adv_multiD}. Please note that in Examples~\ref{Exam::PG} and \ref{Exam::DG}, we exclude the lowest polynomial degrees. We need to do so to comply with the orthogonality condition in Assumption~\ref{Ass::hat_orthog}, see Lemma~\ref{Lem::hat orth} below.

The first finite element scheme is a finite-dimensional version of the weak formulation~\eqref{Eq::weak_one}:
\begin{exam}[continuous trial Petrov--Galerkin (\CPG) finite element]
  \label{Exam::PG}
  Find $\uh \in \Xh \defeq \Hbminus{\dom} \cap \polp{\poldeg}(\triag)$, $\poldeg \geq 2$, such that
  \begin{equation}
    \label{Eq::PG}
    \integprod{\inprod{\vel}{\grad \uh}}{\vh}{} = \integprod{\ff}{\vh}{} \qquad \forall \vh \in \Yh \defeq \polp{\poldeg-1}(\triag).
  \end{equation}
\end{exam}

The second finite element scheme stems from the ultra-weak formulation~\eqref{Eq::weak_two}:

\begin{exam}[discontinuous trial Petrov--Galerkin  (\DPG) finite element]

  \label{Exam::DPG}
 Find $\uh \in \Xh \defeq \polp{\poldeg}(\triag)$, $\poldeg \geq 0$, such that
  \begin{equation}
    \label{Eq::DPG}
    -\integprod{\uh}{\inprod{\vel}{\grad \vh}}{} = \integprod{\ff}{\vh}{} \qquad \forall \vh \in \Yh \defeq \Hbplus{\dom} \cap \polp{\poldeg+1}(\triag).
  \end{equation}
\end{exam}

Finally, the {\DG} method for problem~\eqref{Eq::adv_multiD} (letting $\grad$ also denote the broken (elementwise) gradient) reads:

\begin{exam}[{\DG} finite element]
  \label{Exam::DG}
 Find $\uh \in \Xh \defeq \polp{\poldeg}(\triag)$, $\poldeg \geq 1$, such that
  \begin{subequations} \label{Eq::DG}
    \begin{equation}
    \B_\h(\uh, \vh) = (\ff,  \vh) \qquad \forall \vh \in \Yh \defeq \polp{\poldeg}(\triag),
  \end{equation}
  where
  \begin{align}
    \B_\h(\uh, \vh) & \defeq - \sum_{\el \in \triag} \integprod{\uh}{\inprod{\vel}{\grad \vh}}{\el} \nonumber                                                                                                                                                                                            \\
                    & \quad - \sum_{\edg \in \sklt^{\mathrm{int}}} \inprod{\vel}{\nn} \avg{\uh} \jump{\vh} +  \sum_{\edg \in \sklt^{\mathrm{int}}} \frac{1}{2}  \vert \inprod{\vel}{\nn} \vert \jump{\uh} \jump{\vh} + \sum_{\edg \in \sklt^{\mathrm{bnd}}} \lf( \inprod{\vel}{\nn} \rt)^+ \uh \vh.
  \end{align}
  \end{subequations}
  Here the notation $\uh^-$ and $\uh^+$ stands for the trace value on a vertex from left and from right, respectively, the average is defined as $\avg{\uh} \defeq (\uh^- + \uh^+)/2$, and the jump is defined as $\jump{\uh} \defeq \uh^+ - \uh^-$.
  In this formulation, the upwind {\DG} flux is applied on the cell interfaces.
\end{exam}

\section{Error localization}
\label{Sec::localization}
%
In this section, we show that if the numerical solution $\uh$ satisfies a first-order orthogonality condition with respect to hat basis functions, one can obtain a two-sided bound on $\Vert \Res(\uh) \Vert_{\vel; \, \Hbplusdual{\dom}}$ by identifying some (infinite-dimensional) problems on patches of elements around vertices.

\subsection{Patches and partition of unity by the hat functions}

Let $\trinod$ denote the {\em patch} of all simplices which share the given vertex $\nod$, $\trinod \defeq \{ \el, \nod \in \nodel \}$.  Let $\patch$ be the corresponding open subdomain. Then $\cup_{\nod \in \nodh} \patch$ forms an overlapping partition of $\dom$, with $\card = 2$ maximal overlap in one space dimension. For all $\nod \in \nodh$, let $\psia \in \Hs{1}(\dom) \cap \polp{1}(\triag)$ be the piecewise affine hat function, taking value $1$ in vertex $\nod$ and $0$ in all other vertices. The hat functions verify $\text{supp}(\psia) = \overline{\patch}$ and form a {\em partition of unity} as
\begin{equation}
  \label{Eq::unity_partition}
  \sum_{\nod \in \nodh} \psia = 1.
\end{equation}
%


\subsection{Cut-off estimates}
Similarly to~\eqref{eq_Hpm}, let $\Hbplus{\patch}$ contain those functions from $\Hs{1}(\patch)$ with zero trace on the outflow boundary of $\patch$.  Define two patchwise spaces
\begin{align}
  \label{Eq::Va}
  \Ya \defeq \begin{cases}
    \Hs{1}_0(\patch), & \nod \notin  \nodh^{{\bdomm}}, \\
    \Hbplus{\patch},  & \nod \in  \nodh^{{\bdomm}},
  \end{cases}
\end{align}
and
\begin{align}
\label{Eq::tVa}
\tYa(\patch) \defeq \begin{cases}
      \{\Hs{1}(\patch): (\vv, 1)_\patch = 0\}, & \nod \notin  \nodh^{{\bdomp}}, \\
      \Hbplus{\patch},                         & \nod \in  \nodh^{{\bdomp}}.
\end{cases}
\end{align}
In the sequel, we will use several times the following fact:
\begin{equation}\label{eq_impl}
  \vv \in  \tYa(\patch) \Longrightarrow \psia \vv \in \Ya.
\end{equation}
Let us define
  \begin{equation*}
    \cregl \defeq \max_{\nod \in \nodh} \lf(1+ \cpf{\patch} \h_\patch \Vert \grad \psia \Vert_{\infty}  \rt).
  \end{equation*}
We notice that this constant only depends on the shape-regularity constant $\kappa_{\triag}$. As in the present one-dimensional setting, $\vel$ is a constant scalar, the following cut-off Poincar{\'e} estimate follows immediately from~\cite[Theorem~3.1]{Cars_Funk_full_rel_FEM_00} or
\cite[Section~3]{Brae_Pill_Sch_p_rob_09}, \cf\ also~\cite[Lemma~3.12]{ern2015polynomial}.

\begin{lem}[local cut-off estimate]
  \label{Lem::cutoff_PF} For any $\nod \in \nodh$, we have
  \[
    \Vert {\inprod{\vel}{\nabla}} (\psia \vv) \Vert_{{\patch}} \leq \cregl \Vert {\inprod{\vel}{\nabla}} \vv \Vert_{{\patch}} \qquad \forall \vv \in \tYa{(\patch)}.
  \]
\end{lem}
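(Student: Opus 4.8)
The plan is to combine the Leibniz (product) rule with the triangle inequality and then exploit the fact that, in one space dimension, $\vel$ is a constant scalar, so that $\Ltwonorm{\inprod{\vel}{\grad \ww}}{\patch} = |\vel|\,\Ltwonorm{\grad \ww}{\patch}$ for any $\ww \in \Hs{1}(\patch)$. Concretely, I would start from
\[
  \inprod{\vel}{\grad}(\psia \vv) = \psia\,\inprod{\vel}{\grad \vv} + (\inprod{\vel}{\grad \psia})\,\vv,
\]
and estimate the two summands separately.

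For the first summand I would use that $\psia$ is a hat function, so $0 \le \psia \le 1$ pointwise, which gives at once $\Ltwonorm{\psia\,\inprod{\vel}{\grad \vv}}{\patch} \le \Ltwonorm{\inprod{\vel}{\grad \vv}}{\patch}$. For the second summand I would bound the piecewise-constant factor by $|\inprod{\vel}{\grad \psia}| \le |\vel|\,\Vert \grad \psia \Vert_{\infty}$ and then call on a Poincar\'e/Friedrichs inequality for $\vv$ on the patch, obtaining
\[
  \Ltwonorm{(\inprod{\vel}{\grad \psia})\,\vv}{\patch} \le |\vel|\,\Vert \grad \psia \Vert_{\infty}\,\Ltwonorm{\vv}{\patch} \le |\vel|\,\Vert \grad \psia \Vert_{\infty}\,\cpf{\patch}\,\h_\patch\,\Ltwonorm{\grad \vv}{\patch}.
\]
Recognizing $|\vel|\,\Ltwonorm{\grad \vv}{\patch} = \Ltwonorm{\inprod{\vel}{\grad \vv}}{\patch}$, summing the two bounds and factoring then produces precisely the constant $1 + \cpf{\patch}\,\h_\patch\,\Vert \grad \psia \Vert_{\infty} \le \cregl$, which is the claim.

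The only step requiring care is the Poincar\'e/Friedrichs estimate, and this is exactly where the structure of $\tYa(\patch)$ is used: if $\nod \notin \nodhout$, then $\vv$ has zero mean over $\patch$ and \eqref{Eq::Poincare} applies with $\bar \vv = 0$; if $\nod \in \nodhout$, then $\vv$ vanishes on the outflow part of $\partial \patch$ and \eqref{Eq::Friedrichs} applies, the notation $\cpf{\patch}$ covering both constants by definition. I do not anticipate a genuine obstacle in $\dimx = 1$; the argument is elementary. The conceptual crux — and the reason the cited results of \cite{Cars_Funk_full_rel_FEM_00,Brae_Pill_Sch_p_rob_09} are invoked for the honest multidimensional cut-off estimate — is that the clean cancellation of the factor $|\vel|$ above hinges on $\vel$ being a constant scalar, so that $\inprod{\vel}{\grad}$ is merely a scalar multiple of $\grad$. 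For $\dimx > 1$ the directional derivative $\inprod{\vel}{\grad}$ no longer controls the full gradient, and the estimate is no longer a one-line consequence of a scalar Poincar\'e inequality.
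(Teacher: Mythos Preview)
Your argument is correct and is precisely the standard cut-off computation that the paper alludes to by citing \cite{Cars_Funk_full_rel_FEM_00,Brae_Pill_Sch_p_rob_09,ern2015polynomial}; the paper does not spell out the steps but simply notes that, since $\vel$ is a constant scalar in one dimension, the estimate follows immediately from those references. Your write-up supplies exactly those details, including the correct case distinction on $\tYa(\patch)$ for the Poincar\'e/Friedrichs step, and your remark on why the argument is specific to $\dimx=1$ is apt.
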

\subsection{Error localization}
The following assumption on the \emph{$\psia$-orthogonality} of the residual will be crucial to localize the error:
\begin{asm}[$\psia$-orthogonality]
  \label{Ass::hat_orthog}
  The residual $\Res(\uh) \in \Hbplusdual{\dom} $ defined in~\eqref{Eq::Res_two} satisfies
  \begin{equation} \label{Eq::hat_orthog}
    \pair{\Res(\uh)}{\psia} = \integprod{\ff}{\psia}{\patch} + \integprod{\uh}{\inprod{\vel}{\grad \psia}}{\patch} = 0  \qquad \forall \nod \in \nodhi \cup \nodh^{{\bdomm}}.
  \end{equation}
\end{asm}
Since the zero-extension of a function in $\Ya$ is in $\Hbplus{\dom}$, we can define the restriction of $\Res(\uh)$ from~\eqref{Eq::Res_two} to the space $\Ya$ as
\begin{equation*}
  \Vert \Res(\uh) \Vert_{\Yap} \defeq \sup_{\vv \in \Ya{\setminus \{0\}}} \dfrac{ \pair{\Res(\uh)}{\vv}}{\Vert \inprod{\vel}{ \grad \vv} \Vert_\patch}.
\end{equation*}
We then have:

\begin{thm}[localizability of residual dual norms  with $\psia$-orthogonality]
  \label{Thm::Localization}
  Provided $\Res(\uh)$ satisfies Assumption~\ref{Ass::hat_orthog}, we have
  \begin{subequations}\begin{equation}
    \label{Eq::Res_loc_upper_bound}
    \Vert \Res{(\uh)} \Vert^2_{{\vel; }\, \Hbplusdual{\dom}}  \leq 2 \cregl^2  \sum_{\nod \in \nodh} \Vert \Res{(\uh)} \Vert^2_{\Yap}.
  \end{equation}
  Independently of Assumption~\ref{Ass::hat_orthog}, the following always holds true:
  \begin{equation}
    \label{Eq::Res_loc_lower_bound}
    \sum_{\nod \in \nodh} \Vert \Res{(\uh)} \Vert^2_{\Yap} \leq 2 \Vert \Res{(\uh)} \Vert^2_{\vel; \, \Hbplusdual{\dom}}.
  \end{equation} \end{subequations}
\end{thm}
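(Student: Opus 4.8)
The plan is to prove the two inequalities by separate arguments. The lower bound~\eqref{Eq::Res_loc_lower_bound} requires no structural hypothesis and is essentially a consequence of the bounded overlap $\card = 2$ of the patches, whereas the upper bound~\eqref{Eq::Res_loc_upper_bound} is where the partition of unity~\eqref{Eq::unity_partition}, the $\psia$-orthogonality of Assumption~\ref{Ass::hat_orthog}, and the cut-off estimate of Lemma~\ref{Lem::cutoff_PF} all enter; this second direction is the genuine content of the statement.

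For the lower bound, I would first observe that $\vv \mapsto \Ltwonorm{\inprod{\vel}{\grad \vv}}{\patch}$ is a genuine Hilbert norm on $\Ya$ (a nonzero constant $\vel$ forces $\grad \vv = 0 \Rightarrow \vv$ constant, which vanishes under the boundary or zero-trace constraint built into $\Ya$). Hence for each $\nod \in \nodh$ the supremum defining $\Vert \Res(\uh)\Vert_{\Yap}$ is attained at a Riesz representer $\vv_\nod \in \Ya$, for which $\pair{\Res(\uh)}{\vv_\nod} = \Vert \Res(\uh)\Vert_{\Yap}^2 = \Ltwonorm{\inprod{\vel}{\grad \vv_\nod}}{\patch}^2$. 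Since the zero-extension of any element of $\Ya$ lies in $\Hbplus{\dom}$, the function $\vv \defeq \sum_{\nod \in \nodh}\vv_\nod$ is an admissible global test function, and additivity of the residual together with the definition~\eqref{Eq::Res_two_norm} gives $\sum_\nod \Vert \Res(\uh)\Vert_{\Yap}^2 = \pair{\Res(\uh)}{\vv} \le \Vert \Res(\uh)\Vert_{\vel;\,\Hbplusdual{\dom}}\,\Ltwonorm{\inprod{\vel}{\grad \vv}}{}$. Finally I would bound $\Ltwonorm{\inprod{\vel}{\grad \vv}}{}^2 \le \card\sum_\nod \Ltwonorm{\inprod{\vel}{\grad \vv_\nod}}{\patch}^2 = 2\sum_\nod \Vert \Res(\uh)\Vert_{\Yap}^2$ using the bounded overlap and Cauchy--Schwarz; writing $S$ for this sum, the chain reads $S \le \Vert \Res(\uh)\Vert_{\vel;\,\Hbplusdual{\dom}}\sqrt{2S}$, which rearranges at once to~\eqref{Eq::Res_loc_lower_bound}.

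For the upper bound, I would take an arbitrary $\vv \in \Hbplus{\dom}$ and expand it through the partition of unity, so that $\pair{\Res(\uh)}{\vv} = \sum_\nod \pair{\Res(\uh)}{\psia \vv}$. The aim is to replace, patch by patch, the factor $\vv$ by a function $\tilde\vv_\nod$ lying in $\tYa(\patch)$ with the same gradient as $\vv$, so that $\psia \tilde\vv_\nod \in \Ya$ by~\eqref{eq_impl} and Lemma~\ref{Lem::cutoff_PF} applies. For an interior or inflow vertex ($\nod \in \nodhi \cup \nodhin$) I set $\tilde\vv_\nod \defeq \vv - \bar\vv_\nod$, subtracting the patch mean $\bar\vv_\nod$; this has zero mean, hence lies in $\tYa(\patch)$, and Assumption~\ref{Ass::hat_orthog} yields $\bar\vv_\nod\,\pair{\Res(\uh)}{\psia} = 0$, so that $\pair{\Res(\uh)}{\psia \vv} = \pair{\Res(\uh)}{\psia \tilde\vv_\nod}$. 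For an outflow vertex ($\nod \in \nodhout$) no mean is removed: since $\vv$ vanishes on $\bdomp$, hence on the outflow boundary of $\patch$, one has $\vv|_\patch \in \Hbplus{\patch} = \tYa(\patch)$ directly, and I take $\tilde\vv_\nod \defeq \vv$. In all cases $\grad \tilde\vv_\nod = \grad \vv$ on $\patch$, so bounding each summand by $\Vert \Res(\uh)\Vert_{\Yap}\,\Ltwonorm{\inprod{\vel}{\grad(\psia\tilde\vv_\nod)}}{\patch}$, invoking the cut-off estimate, then applying Cauchy--Schwarz over $\nodh$ and the overlap bound, gives $\pair{\Res(\uh)}{\vv} \le \sqrt{2}\,\cregl\big(\sum_\nod \Vert \Res(\uh)\Vert_{\Yap}^2\big)^{1/2}\Ltwonorm{\inprod{\vel}{\grad \vv}}{}$. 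Dividing by $\Ltwonorm{\inprod{\vel}{\grad \vv}}{}$, taking the supremum over $\vv$, and squaring produces~\eqref{Eq::Res_loc_upper_bound}.

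I expect the main obstacle to be the correct bookkeeping of the vertex types in the upper bound: one must check in each of the interior, inflow, and outflow cases that $\psia\tilde\vv_\nod$ is indeed an admissible test function for the local dual norm (i.e.\ lies in $\Ya$), and that the subtracted quantity is annihilated by the residual exactly where orthogonality is available. This is precisely where Assumption~\ref{Ass::hat_orthog} is indispensable: orthogonality is asserted only at interior and inflow vertices, and it is matched by the fact that at outflow vertices the restriction of $\vv$ already satisfies the boundary condition defining $\tYa(\patch)$, so that neither orthogonality nor mean subtraction is needed there. The cut-off estimate then converts the patchwise advective seminorm of $\psia\tilde\vv_\nod$ into that of $\vv$ with the single shape-regularity constant $\cregl$, which is what makes the whole bound robust.
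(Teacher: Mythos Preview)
Your proof is correct and follows essentially the same route as the paper's own (very brief) argument: partition of unity plus $\psia$-orthogonality to subtract patch means at interior/inflow vertices, then the cut-off estimate of Lemma~\ref{Lem::cutoff_PF} and the overlap bound for~\eqref{Eq::Res_loc_upper_bound}; and patchwise Riesz representers summed into a global test function, combined with the overlap $\card=2$, for~\eqref{Eq::Res_loc_lower_bound}. The paper merely sketches these steps and defers the details to the cited literature, whereas you spell them out in full, including the correct case distinction at the outflow vertex; nothing is missing.
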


\begin{proof}
  The proof proceeds along the lines in~\cite{blechta2016localization, Cars_Funk_full_rel_FEM_00, Brae_Pill_Sch_p_rob_09, ern2015polynomial}. In particular, noting the partition of unity property~\eqref{Eq::unity_partition} and the $\psia$-orthogonality of Assumption~\ref{Ass::hat_orthog}, one can use
  $v = \sum_{\nod \in \nodh} \psia v$ as the test function to obtain, for each $\vv \in \Hbplus{\dom}$
  \begin{align*}
    \pair{\Res(\uh)}{\vv} & \overset{\eqref{Eq::unity_partition}, \eqref{Eq::hat_orthog}}{=}  \sum_{\nod \in \nodhi \cup \nodhin}  {\pair{\Res(\uh)}{\psia(\vv -  \bar \vv_{{\nod}})}} + \sum_{\nod \in \nodhout}  {\pair{\Res(\uh)}{\psia \vv}},
  \end{align*}
  where $\bar \vv_{\nod}$ is the mean value of $\vv$ on $\patch$.
  Let $\ww_\nod \defeq \vv -  \bar \vv_{{\nod}} \vert_\patch$ if $\nod \in \nodhi \cup \nodhin$ and  $\ww_\nod \defeq \vv \vert_\patch$ if $\nod \in \nodhout$. Then, $\ww_\nod \in \tYa(\patch)$, so that $\psia \ww_\nod \in \Ya$ by~\eqref{eq_impl}.
  Using the cut-off estimate of Lemma~\ref{Lem::cutoff_PF} for $\vv = \ww_\nod$, one can in particular obtain~\eqref{Eq::Res_loc_upper_bound} and~\eqref{Eq::Res_loc_lower_bound} like in~\cite[Theorem~3.7]{blechta2016localization}.
\end{proof}

\subsection{\texorpdfstring{$\psi$\textsubscript{$\bm{a}$}}{}-orthogonality of  the residual for the methods of Section~\ref{Sec::Example}}
The following lemma assesses the validity of  Assumption~\ref{Ass::hat_orthog} for the three methods presented in Section~\ref{Sec::Example}:
\begin{lem}[$\psia$-orthogonality of the residual]
  \label{Lem::hat orth}
  For \CPG\ of Example~\ref{Eq::PG} with $\poldeg \geq 2$, \DPG\ of Example~\ref{Exam::DPG} with $\poldeg \geq 0$, and \DG\ of Example~\ref{Exam::DG} with $\poldeg \geq 1$, Assumption~\ref{Ass::hat_orthog} holds true.
\end{lem}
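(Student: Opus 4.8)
The plan is to verify, for each of the three schemes, that the hat function $\psia$ is itself an admissible test function, so that inserting $\vh = \psia$ into the discrete equation reproduces exactly the orthogonality relation~\eqref{Eq::hat_orthog}. Throughout I restrict to $\nod \in \nodhi \cup \nodhin$, and I will repeatedly use two one-dimensional geometric facts: the support $\overline{\patch}$ of $\psia$ never reaches the outflow boundary for such vertices, so $\psia = 0$ on $\bdomp$; and $\bdomz = \emptyset$ since $\vel$ is a nonzero constant, so $\bdom = \bdomm \cup \bdomp$ consists of just the two endpoints.

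I would treat \DPG\ first, as it is immediate. Comparing the residual~\eqref{Eq::Res_two} with the scheme~\eqref{Eq::DPG}, the quantity $\pair{\Res(\uh)}{\vh} = (\ff, \vh) + (\uh, \inprod{\vel}{\grad \vh})$ coincides with the \DPG\ equation, hence vanishes for every $\vh \in \Yh = \Hbplus{\dom} \cap \polp{\poldeg+1}(\triag)$. It therefore suffices to check $\psia \in \Yh$. Since $\psia$ is continuous, piecewise affine, and vanishes on $\bdomp$ for the vertices under consideration, we have $\psia \in \Hbplus{\dom}$; and $\psia \in \polp{1}(\triag) \subseteq \polp{\poldeg+1}(\triag)$ for any $\poldeg \geq 0$. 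So~\eqref{Eq::hat_orthog} holds with no degree restriction.

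For \DG\ I would insert $\vh = \psia$ into~\eqref{Eq::DG} and exploit the continuity of $\psia$: because $\jump{\psia} = 0$ on every interior face, both the consistency term $\inprod{\vel}{\nn}\avg{\uh}\jump{\psia}$ and the upwind stabilization $\frac{1}{2}\vert\inprod{\vel}{\nn}\vert\jump{\uh}\jump{\psia}$ drop out. The remaining boundary term $\sum_{\edg \in \sklt^{\mathrm{bnd}}}(\inprod{\vel}{\nn})^+ \uh\,\psia$ is supported on $\bdomp$ (where $(\inprod{\vel}{\nn})^+ \neq 0$), on which $\psia = 0$, so it too vanishes. What survives is $\B_\h(\uh,\psia) = -(\uh, \inprod{\vel}{\grad \psia})$, the sum over $\triag$ collapsing to the patch. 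The scheme then gives $-(\uh,\inprod{\vel}{\grad\psia}) = (\ff,\psia)$, i.e. $(\ff,\psia) + (\uh,\inprod{\vel}{\grad\psia}) = 0$, which is precisely~\eqref{Eq::hat_orthog}. Admissibility $\psia \in \Yh = \polp{\poldeg}(\triag)$ requires $\poldeg \geq 1$, explaining the stated restriction.

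The \CPG\ case is the only one needing integration by parts, and is where I expect the bookkeeping of boundary terms to be the main (though minor) obstacle. Here $\uh \in \Hbminus{\dom} \subset \Hs{1}(\dom)$ is globally continuous and vanishes on $\bdomm$, so applying~\eqref{Eq::IBP} with $\vv = \uh$, $\ww = \psia$ rewrites the residual as $\pair{\Res(\uh)}{\psia} = (\ff,\psia) - (\inprod{\vel}{\grad\uh},\psia) + (\inprod{\vel}{\nn}\,\uh,\psia)_{\bdom}$. The boundary integral vanishes term by term: on $\bdomm$ because $\uh = 0$, and on $\bdomp$ because $\psia = 0$ for $\nod \in \nodhi \cup \nodhin$. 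Since $\psia \in \polp{1}(\triag) \subseteq \polp{\poldeg-1}(\triag) = \Yh$ exactly when $\poldeg \geq 2$, the scheme~\eqref{Eq::PG} yields $(\inprod{\vel}{\grad\uh},\psia) = (\ff,\psia)$, so the residual vanishes. This argument also makes transparent why the lowest degrees are excluded for \CPG\ and \DG: the degree conditions are precisely what place the piecewise affine $\psia$ inside the discrete test space.
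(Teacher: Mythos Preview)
Your proof is correct and follows essentially the same approach as the paper: in each case you verify that $\psia$ belongs to the discrete test space (yielding the stated degree restrictions) and that the extra face/boundary terms vanish by continuity of $\psia$ and the support/boundary properties you list. The only cosmetic difference is that for {\CPG} the paper integrates by parts elementwise over $\trinod$ and then uses the continuity of $\uh$ and $\psia$ to cancel the interior interface contributions, whereas you apply the global formula~\eqref{Eq::IBP} directly since $\uh \in \Hs{1}(\dom)$; both routes lead to the same conclusion.
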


\begin{proof}
  Let $\nod \in \nodhi \cup \nodhin$. We verify the condition for each method:
  \begin{itemize}
    \item From definition~\eqref{Eq::Res_two}, for the {\CPG} method~\eqref{Eq::PG}, we have
          \begin{align}
            \label{Eq::tmp_exam_PG}
            \pair{\Res(\uh)}{\psia} = & \sum_{\el \in \trinod } {\big\{} (\ff,  \psia)_\el + (\uh, \inprod{\vel}{\grad  \psia})_\el {\big\}}\nonumber \\  \overset{\mathrm{I.B.P.}}{=} & \sum_{\el \in \trinod } {\big\{} (\ff,  \psia)_\el - \integprod{\inprod{\vel}{\grad \uh}}{\psia}{\el} + \integprod{\inprod{\vel}{\nn} \uh }{\psia}{\partial \el} {\big\}}.
          \end{align}
          For all $\nod \in \nodhi$, the jump $\jump{\psia}$ vanishes at the vertex $\nod$ and $\psia = 0$ on the boundary edge of the patch. Hence, since $\uh$ is also continuous in $\nod$ in the {\CPG} method, the last term in~\eqref{Eq::tmp_exam_PG} disappears and one infers that
          \begin{align*}
            \pair{\Res(\uh)}{\psia} =  \integprod{\ff}{\psia}{\dom} - \integprod{\inprod{\vel}{\grad \uh}}{\psia}{\dom} \overset{\eqref{Eq::PG}}{=} 0,
          \end{align*}
          since we assume $\poldeg \geq 2$, so that $\psia \in \Yh$. The same result is valid for $\nod \in \nodhin$ since $\uh = 0$ on the inflow as imposed in the definition of $\Xh$.

    \item From definition~\eqref{Eq::Res_two} and employing the {\DPG} characterization~\eqref{Eq::DPG}, we obtain in a straightforward manner that
          \begin{align*}
            \pair{\Res(\uh)}{\psia} = 0
          \end{align*}
    for all $\poldeg \geq 0$.

    \item For the {\DG} method~\eqref{Eq::DG}, noting that $\lf(\inprod{\vel}{\nn} \rt)^+ = 0$ on the inflow and using the same arguments on the vanishing of the jump $\jump{\psia}$ and some $\poldeg \geq 1$ by assumption, we have for any vertex $\nod \in \nodhi \cup \nodhin$
          \begin{equation*}
            \sum_{\edg \in \sklt^{\mathrm{int}}} \lf\{  \frac{1}{2}  \vert \inprod{\vel}{\nn} \vert \jump{\uh} -  \inprod{\vel}{\nn} \avg{\uh} \rt\} \jump{\psia} + \sum_{\edg \in \sklt^{\mathrm{bnd}}} \lf( \inprod{\vel}{\nn} \rt)^+ \uh \psia = 0.
          \end{equation*}
          Hence, also employing definition~\eqref{Eq::Res_two}, we infer that
          \begin{align*}
            \pair{\Res(\uh)}{\psia} & =  \sum_{\el \in \trinod } {\big\{} (\ff,  \psia)_\el + (\uh, \inprod{\vel}{{\grad} \psia})_\el {\big\}} = 0
          \end{align*}
          for all $\poldeg \geq 1$ which implies $\psia \in \Yh$.
  \end{itemize}

\end{proof}

\section{Local problems on patches}
\label{Sec::Local_recons}
In this section, we present a local reconstruction technique which provides the key ingredient to evaluate our a posteriori error estimator.
\begin{defi}[patchwise problems]
  \label{Defi::local_second}
  Let $\uh \in \Ltwo(\dom)$ satisfy Assumption~\ref{Ass::hat_orthog}. For all vertices $\nod \in \nodh$, let $\sha \in \Xha$ be the solution of the following advection--reaction problem on the patch $\patch$
  %
  \begin{align}
    \label{Eq::Recons}
    \integprod{\inprod{\vel}{\grad(\psia \sha)}}{ \vh}{\patch} = \integprod{\ff \psia + \lf(\inprod{\vel}{\grad \psia} \rt) \uh}{\vh}{\patch} \qquad \forall \vh \in \Yha,
  \end{align}
  %
  %
  with the finite-dimensional spaces
  \begin{align*}
    \Xha \defeq
    \polp{\poldegp}(\trinod) \cap \Hs{1}(\patch),
    \qquad \Yha \defeq  \polp{\poldegp}(\trinod)
  \end{align*}
   and $\poldegp \geq 0$. Define the global reconstruction $\sh$ by
  \begin{equation}\label{eq_sh}
    \sh \defeq \sum_{\nod \in \nodh} \psia \sha.
  \end{equation}
\end{defi}

\begin{remark}[trial and test spaces]
  \label{Rem::local}
  A priori, the number of degrees of freedom in $\Xha$ and $\Yha$ for $\nod \in \nodhi$ does not match; while there exist $2(\poldegp + 1)$ linearly independent test functions in $\Yha$, the trial space $\Xha$ has only $2 \poldegp + 1$ degrees of freedom.
  For any $\nod  \in \nodhi$, though, the test function in~\eqref{Eq::Recons} given by $\vh = 1$ on both $\el \in \trinod$ is actually superfluous. Indeed, on the one hand, we have
  \begin{equation}
    \label{Eq::tmp_remark_additional}
    \integprod{\inprod{\vel}{\grad(\psia \sha)}}{ 1}{\patch} = \integprod{\inprod{\vel}{\nn}}{ \psia \sha}{\partial \patch} = 0,
  \end{equation}
  according to  the definition of $\psia$. On the other hand, Assumption~\ref{Ass::hat_orthog} guarantees that the right-hand side vanishes in such a case, hence
  \begin{equation*}
    \integprod{\ff \psia + \lf(\inprod{\vel}{\grad \psia}\rt) \uh }{1}{\patch} =  \pair{\Res(\uh)}{\psia} = 0.
  \end{equation*}
  %
\end{remark}

We next show that the solution of~\eqref{Eq::Recons} uniquely exists and the proposed reconstruction is well-posed:

\begin{lem}[well-posedness of Definition~\ref{Defi::local_second}]
  \label{Lem::WP}
  There exists a unique solution $\sha \in \Xha$ of problem~\eqref{Eq::Recons}. It is stable in the sense that
  \[
    \Ltwonorm{\sha}{\patch} \leq C \lf( \h_\patch\Ltwonorm{ \ff}{\patch} +  \Ltwonorm{ \uh}{\patch} \rt),
  \]
  for some constant $C > 0$ only depending on the shape-regularity constant $\kappa_{\triag}$, the polynomial degree $\poldegp$, and the advection parameter $\vel$.
\end{lem}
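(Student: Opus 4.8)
The plan is to regard~\eqref{Eq::Recons} as a square finite-dimensional linear system, deduce existence from uniqueness, and then read off the stability estimate from one judicious test function combined with a scaling argument.

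First I would confirm that the effective system is square. For $\nod\in\nodhin\cup\nodhout$ the patch is a single element and $\dim\Xha=\dim\Yha=\poldegp+1$. For $\nod\in\nodhi$ the trial space $\Xha$ has $2\poldegp+1$ degrees of freedom, one fewer than $\Yha$; but, as already observed in Remark~\ref{Rem::local}, the test function $\vh=1$ yields the identity $0=0$ (its left-hand side vanishes by~\eqref{Eq::tmp_remark_additional} and its right-hand side by Assumption~\ref{Ass::hat_orthog}), so it imposes no constraint and can be discarded, leaving a square system of size $2\poldegp+1$. In each case it then suffices to prove injectivity: with $\ff=0$ and $\uh=0$, show $\sha=0$.

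For injectivity, the crucial point is that $\psia\sha\in\polp{\poldegp+1}(\trinod)\cap\Hs{1}(\patch)$, so that $\inprod{\vel}{\grad(\psia\sha)}\in\polp{\poldegp}(\trinod)=\Yha$ is an admissible test function. Inserting $\vh=\inprod{\vel}{\grad(\psia\sha)}$ into the homogeneous version of~\eqref{Eq::Recons} gives $\Ltwonorm{\inprod{\vel}{\grad(\psia\sha)}}{\patch}^2=0$; since $\vel$ is a nonzero constant, $\psia\sha$ is constant on the connected interval $\patch$. As $\psia$ vanishes at an endpoint of $\overline{\patch}$ (the two outer vertices when $\nod\in\nodhi$, the interior neighbour when $\nod\in\nodhin\cup\nodhout$), this constant is $0$, whence $\psia\sha\equiv0$; and since $\psia>0$ on the interior of its support, $\sha=0$. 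Existence and uniqueness of the square system follow.

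For the bound, I would test the full problem~\eqref{Eq::Recons} with the same $\vh=\inprod{\vel}{\grad(\psia\sha)}$. Cauchy--Schwarz, $0\le\psia\le1$, and $\Linfnorm{\grad\psia}\le C(\kappa_{\triag})/\h_\patch$ (shape regularity) give
\[
  \Ltwonorm{\inprod{\vel}{\grad(\psia\sha)}}{\patch}\le\Ltwonorm{\ff}{\patch}+|\vel|\,\frac{C(\kappa_{\triag})}{\h_\patch}\Ltwonorm{\uh}{\patch}.
\]
Since $\psia\sha$ vanishes at a vertex of $\patch$, a one-dimensional Friedrichs inequality of the type~\eqref{Eq::Friedrichs} (with constant $1$) yields $\Ltwonorm{\psia\sha}{\patch}\le\h_\patch\,|\vel|^{-1}\Ltwonorm{\inprod{\vel}{\grad(\psia\sha)}}{\patch}$, so that the powers of $\h_\patch$ and $|\vel|$ cancel in the $\uh$-contribution and
\[
  \Ltwonorm{\psia\sha}{\patch}\le\h_\patch\,|\vel|^{-1}\Ltwonorm{\ff}{\patch}+C(\kappa_{\triag})\Ltwonorm{\uh}{\patch}.
\]
The final, and I expect hardest, step is to pass from $\Ltwonorm{\psia\sha}{\patch}$ to $\Ltwonorm{\sha}{\patch}$. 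This cannot be done pointwise because $\psia$ degenerates at the patch boundary; instead I would use that $\sha\mapsto\psia\sha$ is an injective linear map between the fixed finite-dimensional spaces $\Xha$ and $\polp{\poldegp+1}(\trinod)\cap\Hs{1}(\patch)$. On a reference patch all norms on these spaces are equivalent, and the two relevant $\Ltwo$ norms scale identically under the affine map onto $\patch$, so $\Ltwonorm{\sha}{\patch}\le C(\kappa_{\triag},\poldegp)\Ltwonorm{\psia\sha}{\patch}$ with a constant depending only on $\poldegp$ and the shape-regularity-controlled geometry of the patch. Absorbing the remaining factor $|\vel|^{-1}$ into the constant then gives the asserted estimate. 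The care needed here lies precisely in certifying that this equivalence constant is independent of $\h_\patch$ and depends on the data only through $\kappa_{\triag}$, $\poldegp$, and $\vel$; the injectivity and energy steps are otherwise routine.
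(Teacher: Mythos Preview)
Your proof is correct and follows essentially the same route as the paper. Both hinge on the observation that $\vh=\inprod{\vel}{\grad(\psia\sha)}\in\Yha$ is an admissible test function, yielding injectivity and the energy bound (the paper packages this as an inf--sup condition with unit constant, together with Remark~\ref{Rem::local}); the passage to $\Ltwonorm{\sha}{\patch}$ is then done via Friedrichs and exactly the finite-dimensional norm equivalence $\Ltwonorm{\sha}{\patch}\le C(\poldegp)\Ltwonorm{\psia\sha}{\patch}$ that you describe, for which the paper cites~\cite[Lemma~3.42]{verfurthbook}.
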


\begin{proof}
  Since $\psia {\sha} \in {\Hs{1}_0(\patch)}$ for all $\nod \in \nodhi$, $\psia {\sha} \in \Hbplus{\patch}$ for $\nod \in \nodhin$, and $\psia {\sha} \in \Hbminus{\patch}$ for $\nod \in \nodhout$, $\Vert \inprod{\vel}{\grad(\psia \cdot)} \Vert_{\patch}$ is a norm on $\Xha$. Noting that  $\vel$ is constant and $\inprod{\vel}{\grad(\psia \sha)} \in \polp{\poldegp}(\trinod) = \Yha$, one can write the inf--sup condition of the  bilinear form associated with the left-hand side of~\eqref{Eq::Recons} as
  \begin{align*}
    \sup_{\vh \in \Yha {\setminus \{0\}}} \dfrac{\integprod{\inprod{\vel}{\grad(\psia \sha)}}{\vh}{\patch}}{ \Vert \vh \Vert_{\patch} } = \Vert \inprod{\vel}{\grad(\psia \sha)} \Vert_{\patch},
  \end{align*}
  with unit inf--sup constant. Following Remark~\ref{Rem::local}, this injectivity  implies the bijectivity of the operator.

  To derive a bound on $\sha$, we observe that
  \begin{equation}
    \label{Eq::stab}
    \Vert \inprod{\vel}{\grad ( \psia \sha)} \Vert_{\patch} \leq \Ltwonorm{ \ff \psia + \lf(\inprod{\vel}{\grad \psia} \rt) \uh}{\patch}.
  \end{equation}
  If one wants to check the stability in the $\Ltwo$-norm, one can start with the following norm equivalence on $\Yha$:
  \begin{equation*}
    \Vert \psia \sha \Vert_{\patch} \leq \Vert \sha \Vert_{\patch} \leq C(\poldegp) \Vert \psia \sha \Vert_{\patch},
  \end{equation*}
  using similar arguments as in~\cite[Lemma 3.42]{verfurthbook}. Consequently, one has
  \begin{equation}
    \label{Eq::WP_auxtwo}
    \Vert \inprod{\vel}{\grad(\psia \sha)} \Vert_{\patch} \geq  \dfrac{{1}}{{C(\poldegp)} \cpf{\patch} \h_\patch} \vert \vel \vert \Vert \sha \Vert_{\patch},
  \end{equation}
  employing the Poincar\'e inequality~\eqref{Eq::Friedrichs}.
  Using~\eqref{Eq::WP_auxtwo} and~\eqref{Eq::stab}, we infer that
  \begin{align*}
    \Vert \sha \Vert_{\patch}  \leq \dfrac{{C(\poldegp)} \cpf{\patch} \h_\patch}  { \vert \vel \vert} \Ltwonorm{ \ff \psia + \lf(\inprod{\vel}{\grad \psia} \rt)\uh}{\patch} \ \leq {C(\poldegp)} \cpf{\patch} \lf[ \dfrac{\h_\patch}  { \vert \vel \vert} \Ltwonorm{ \ff }{\patch} + {C(\kappa_{\triag})} \Ltwonorm{ \uh}{\patch} \rt].
  \end{align*}

\end{proof}
The following lemma presents the main properties of the reconstruction $\sh$ from  Definition~\ref{Defi::local_second}:
\begin{lem}[properties of the reconstruction]
  \label{Lemm::orthog}
  Definition~\ref{Defi::local_second} yields $\sh$ such that
  \begin{equation}
    \label{Eq::bnd}
    \sh \in \polp{\poldegp+1}(\triag) \cap \Hbminus{\dom},
  \end{equation}
  \ie, it lies in a natural finite-dimensional functional space corresponding to the weak formulation~\eqref{Eq::weak_one}. Moreover, the following orthogonality is satisfied
  \begin{equation}
    \label{Eq::orthog}
    \integprod{\ff - \inprod{\vel}{\grad \sh}}{ \vh}{\el}  = 0 \qquad  \forall \vh \in \polp{\poldegp}(\el), \quad \forall \el \in \triag.
  \end{equation}
\end{lem}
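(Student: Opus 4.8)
The plan is to treat the two assertions separately, and it is convenient to establish the elementwise orthogonality \eqref{Eq::orthog} before completing \eqref{Eq::bnd}, since the localized form of the patchwise equation \eqref{Eq::Recons} also delivers the inflow boundary condition. The regularity part of \eqref{Eq::bnd} is purely algebraic: since $\psia \in \polp{1}(\triag)$ and each $\sha \in \Xha \subset \polp{\poldegp}(\trinod)$, the product $\psia \sha$ is piecewise polynomial of degree at most $\poldegp+1$; on any fixed $\el$ exactly two hat functions are nonzero, so $\sh|_\el$ is a sum of two such products and thus $\sh \in \polp{\poldegp+1}(\triag)$. For $\Hs{1}$-conformity I would use that each $\sha$ is continuous on $\patch$ (one space dimension), that $\psia$ is globally continuous, and that $\psia \sha$ vanishes on the interior part of $\partial\patch$ where $\psia = 0$; hence the zero extension of each $\psia \sha$ lies in $\Hs{1}(\dom)$ and so does the finite sum $\sh$ from \eqref{eq_sh}.

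Next I would prove \eqref{Eq::orthog}. The key observation is that the test space $\Yha = \polp{\poldegp}(\trinod)$ is the \emph{broken} polynomial space, so any $\vh \in \polp{\poldegp}(\el)$ extended by zero to the remaining element of the patch is an admissible test function in \eqref{Eq::Recons}. Writing this equation at each of the two vertices $\nod$ whose patch contains $\el$, restricting to $\el$, and summing the two identities, I would invoke the partition of unity \eqref{Eq::unity_partition}, which on $\el$ gives $\sum \psia = 1$ and $\sum \inprod{\vel}{\grad \psia} = 0$. This collapses the left-hand side to $\integprod{\inprod{\vel}{\grad \sh}}{\vh}{\el}$ and the right-hand side to $\integprod{\ff}{\vh}{\el}$, which is exactly \eqref{Eq::orthog}.

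Finally, for the condition $\sh = 0$ on $\bdomm$ in \eqref{Eq::bnd}, I would note that at an inflow vertex only one hat contributes, so the value of $\sh$ at the inflow point equals that of $\sha$. To pin it down I would test \eqref{Eq::Recons} at this vertex with the constant $\vh = 1$ (admissible since $\poldegp \geq 0$): integration by parts on the left, using that $\psia$ vanishes at the outflow end of the single-element patch, leaves only the boundary contribution $-\inprod{\vel}{\nn}\sha$ at the inflow point, while the right-hand side is precisely $\pair{\Res(\uh)}{\psia}$, which vanishes by Assumption~\ref{Ass::hat_orthog}. Since $\vel \neq 0$, this forces $\sha$, and hence $\sh$, to vanish at the inflow boundary.

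The hard part will be this last step: whereas the $\Hs{1}$-conformity and the elementwise orthogonality follow from the algebraic structure of the construction alone, the inflow condition is \emph{not} built into the definition of $\Xha$ in Definition~\ref{Defi::local_second} and genuinely requires the $\psia$-orthogonality of Assumption~\ref{Ass::hat_orthog}. Some care is also needed to confirm that the constant test function is admissible at a boundary vertex and that its role here is different from the interior case of Remark~\ref{Rem::local}: at an interior vertex the equation for $\vh = 1$ is superfluous, whereas at the inflow vertex it carries the essential information fixing $\sha$ at the boundary.
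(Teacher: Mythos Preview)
Your proposal is correct and follows essentially the same approach as the paper: the algebraic regularity and $\Hs{1}$-conformity of $\sh$, the elementwise orthogonality via the zero-extension of $\vh \in \polp{\poldegp}(\el)$ into $\Yha$ combined with the partition of unity, and the inflow condition obtained by testing~\eqref{Eq::Recons} with the constant and invoking Assumption~\ref{Ass::hat_orthog}. The only difference is the order of presentation (the paper does the inflow condition before the orthogonality), and your final paragraph nicely complements Remark~\ref{Rem::local} by contrasting the role of the constant test function at interior versus inflow vertices.
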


\begin{proof}
  For~\eqref{Eq::bnd} is clear that $\sh \in \polp{\poldegp+1}(\triag) \cap \Hs{1}(\dom)$, and we only need to show that $\sh$ satisfies the boundary condition requirement of the space $\Hbminus{\dom}$, \ie, $\sh \big \vert_{\bdomm} = 0$. We check this by showing that $\sha\big \vert_{ \partial \patch \cap \bdomm} = 0$ for $\nod \in \nodhin$. We see from~\eqref{Eq::Recons} and Assumption~\ref{Ass::hat_orthog} that
  \[
  \integprod{\inprod{\vel}{\grad(\psia \sha)}}{ 1}{\patch} = \integprod{\ff \psia + \lf(\inprod{\vel}{\grad \psia} \rt) \uh}{1}{\patch} =  \pair{\Res(\uh)}{\psia} = 0,
  \]
  so that the requested equality follows from integration-by-parts similarly to~\eqref{Eq::tmp_remark_additional},
  \[
    \integprod{\inprod{\vel}{\grad(\psia \sha)}}{ 1}{\patch} = \inprod{\vel}{\nn}{\sha} \vert_{\bdomm},
  \]
  and since $\inprod{\vel}{\nn} \neq 0$ on $\bdomm$ by definition.

  To prove~\eqref{Eq::orthog}, first note that $ \sum_{\nod \in \nodel} \psia \big\vert_\el = 1$ and $\sum_{\nod \in \nodel} \lf( \inprod{\vel}{\grad \psia} \rt) \uh \big \vert_\el = 0$. Thus, since $\Yha \vert_\el = \polp{\poldegp}(\el)$, extending the function $\vh {\in \polp{\poldegp}(\el)}$ by zero outside $\el$, and using respectively definitions~\eqref{eq_sh} of $\sh$ and~\eqref{Eq::Recons} of $\sha$, one has
  \begin{align*}
    \integprod{\ff - \inprod{\vel}{\grad \sh}}{ \vh}{\el} & = \integprodBigg{\sum_{\nod \in \nodel} \big\{ \psia \ff  + \lf( \inprod{\vel}{\grad \psia} \rt) \uh - \inprod{\vel}{\grad (\psia \sha}) \big\}}{ \vh}{\el}           \\
    & = \sum_{\nod \in \nodel} \integprod{  \psia \ff  + \lf( \inprod{\vel}{\grad \psia} \rt)  \uh - \inprod{\vel}{\grad (\psia \sha})}{ \vh}{\patch}  = 0.
  \end{align*}
\end{proof}
\begin{rem}[local conservation]
  As  a special case of~\eqref{Eq::orthog}, since $1 \vert_\el \in \polp{\poldegp}(\el)$ for all $\poldegp \geq 0$, one has the conservation property
  \begin{equation}
    \label{Eq::mean_zero}
    \integprod{\ff - \inprod{\vel}{\grad \sh}}{1}{\el} = 0 \qquad \forall \el \in \triag.
  \end{equation}
\end{rem}

\section{Guaranteed a posteriori estimate}
\label{Sec::upper_bound}
The guaranteed upper bound on the error can be presented as follows:
\begin{thm}[guaranteed a posteriori error estimate]
  \label{Thm::Local_second_reli}
  Let   $\uu \in \Ltwo(\dom)$ be the solution of~\eqref{Eq::weak_two} and let $\uh \in \Ltwo(\dom)$ be arbitrary subject to the $\psia$-orthogonality in Assumption~\ref{Ass::hat_orthog}. Furthermore, consider $\sh$ to be the reconstruction from Definition~\ref{Defi::local_second} with $\poldegp \geq 0$. Then
  \begin{equation*}
    \Ltwonorm{\uu - \uh}{} \leq \Err \defeq \lf\{\sumk \lf( \ErrNCel{\el} +  \ErrOscf{\el} \rt)^2 \rt \}^{1/2},
  \end{equation*}
  where
  \begin{equation*}
    \ErrNCel{\el} {\defeq} \Vert \uh - \sh \Vert_\el
  \end{equation*}
  and the data oscillation estimator is given as
  \begin{equation}
    \label{Eq::Err_Res}
    \ErrOscf{\el} {\defeq} \dfrac{\h_\el}{{\pi}\vert \vel \vert} \Vert (I - \proj_{{\polp{\poldegp}(\triag)}}) \ff \Vert_\el,
  \end{equation}
  with $\proj_{{\polp{\poldegp}(\triag)}}$ the $\Ltwo(\dom)$-orthogonal projection onto $\polp{\poldegp}(\triag)$.
\end{thm}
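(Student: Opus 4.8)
The plan is to estimate the velocity-scaled dual residual norm $\Vert \Res(\uh) \Vert_{\vel; \, \Hbplusdual{\dom}}$ and then invoke the error-residual equality of Theorem~\ref{Thm::Err_Res}, which states $\Ltwonorm{\uu - \uh}{} = \Vert \Res(\uh) \Vert_{\vel; \, \Hbplusdual{\dom}}$. It therefore suffices to prove that $\pair{\Res(\uh)}{\vv} \leq \Err \, \Ltwonorm{\inprod{\vel}{\grad \vv}}{}$ for every $\vv \in \Hbplus{\dom}$, since taking the supremum in the definition~\eqref{Eq::Res_two_norm} then yields the claim.

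First I would rewrite the residual in terms of the reconstruction $\sh$. Starting from $\pair{\Res(\uh)}{\vv} = \integprod{\ff}{\vv}{} + \integprod{\uh}{\inprod{\vel}{\grad \vv}}{}$, I add and subtract $\integprod{\sh}{\inprod{\vel}{\grad \vv}}{}$. Because $\sh \in \Hbminus{\dom}$ by~\eqref{Eq::bnd} and $\vv \in \Hbplus{\dom}$, applying the integration-by-parts formula~\eqref{Eq::IBP} to $\sh$ and $\vv$ produces a boundary term that vanishes: $\sh = 0$ on $\bdomm$, $\vv = 0$ on $\bdomp$, and $\inprod{\vel}{\nn} = 0$ on $\bdomz$. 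Hence $\integprod{\sh}{\inprod{\vel}{\grad \vv}}{} = -\integprod{\inprod{\vel}{\grad \sh}}{\vv}{}$, and the residual splits as
\[
\pair{\Res(\uh)}{\vv} = \integprod{\ff - \inprod{\vel}{\grad \sh}}{\vv}{} + \integprod{\uh - \sh}{\inprod{\vel}{\grad \vv}}{}.
\]

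The key step is then the elementwise handling of the first term using the orthogonality~\eqref{Eq::orthog}. Since $\sh \in \polp{\poldegp+1}(\triag)$ and $\vel$ is a constant scalar, $\inprod{\vel}{\grad \sh} \in \polp{\poldegp}(\el)$ on each $\el$; combined with~\eqref{Eq::orthog} this forces $\inprod{\vel}{\grad \sh} = \proj_{\polp{\poldegp}(\triag)} \ff$ elementwise, so that $\ff - \inprod{\vel}{\grad \sh} = (I - \proj_{\polp{\poldegp}(\triag)}) \ff$. As this oscillation term is $\Ltwo(\el)$-orthogonal to constants, I may subtract the elementwise mean $\bar \vv_\el$ of $\vv$, and then bound on each $\el$ by Cauchy-Schwarz followed by the Poincar\'e inequality~\eqref{Eq::Poincare} with $\cp{\el} = 1/\pi$, using also the one-dimensional identity $\Ltwonorm{\grad \vv}{\el} = \Ltwonorm{\inprod{\vel}{\grad \vv}}{\el}/\vert \vel \vert$:
\[
\integprod{(I - \proj_{\polp{\poldegp}(\triag)})\ff}{\vv - \bar \vv_\el}{\el} \leq \frac{\h_\el}{\pi \vert \vel \vert} \Ltwonorm{(I - \proj_{\polp{\poldegp}(\triag)})\ff}{\el} \Ltwonorm{\inprod{\vel}{\grad \vv}}{\el} = \ErrOscf{\el} \, \Ltwonorm{\inprod{\vel}{\grad \vv}}{\el}.
\]
The second term is bounded directly by $\ErrNCel{\el} \, \Ltwonorm{\inprod{\vel}{\grad \vv}}{\el}$ via Cauchy-Schwarz on each $\el$. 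Summing over $\el \in \triag$ and applying the discrete Cauchy-Schwarz inequality gives $\pair{\Res(\uh)}{\vv} \leq \Err \, \Ltwonorm{\inprod{\vel}{\grad \vv}}{}$, which closes the argument.

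The only genuinely delicate point is the vanishing of the boundary term in the integration-by-parts step, which is precisely where the inclusion $\sh \in \Hbminus{\dom}$ from Lemma~\ref{Lemm::orthog} is indispensable; everything else reduces to the two reconstruction properties~\eqref{Eq::bnd} and~\eqref{Eq::orthog} together with routine Poincar\'e and Cauchy-Schwarz bookkeeping. I expect no essential obstacle beyond verifying these sign and boundary conditions carefully, and I emphasize that the sharp constant $\cp{\el}=1/\pi$ is what keeps the bound \emph{guaranteed}, with no hidden constant.
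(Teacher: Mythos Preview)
Your proposal is correct and follows essentially the same route as the paper's own proof: invoke Theorem~\ref{Thm::Err_Res}, use $\sh \in \Hbminus{\dom}$ together with~\eqref{Eq::IBP} to rewrite the residual as $(\ff - \inprod{\vel}{\grad \sh}, \vv) + (\uh - \sh, \inprod{\vel}{\grad \vv})$, subtract the elementwise mean via the orthogonality~\eqref{Eq::orthog}/\eqref{Eq::mean_zero}, apply the Poincar\'e inequality with constant $1/\pi$, and finish with Cauchy--Schwarz. The only cosmetic difference is ordering: the paper first bounds $\Ltwonorm{\ff - \inprod{\vel}{\grad \sh}}{\el}$ and only afterwards identifies it with $\Ltwonorm{(I - \proj_{\polp{\poldegp}(\triag)})\ff}{\el}$, whereas you make that identification upfront.
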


\begin{proof}
  Since $\sh \in \Hbminus{\dom}$ by Lemma~\ref{Lemm::orthog}, for any $\vv \in \Hbplus{\dom}$ the integration-by-parts formula~\eqref{Eq::IBP} implies that
  \begin{equation}\label{eq_IPPS_sh}
    \integprod{\sh}{\inprod{\vel}{\grad \vv}}{}  + \integprod{\inprod{\vel}{\grad \sh}}{\vv}{} = \integprod{\sh}{\vh \inprod{\vel}{\nn}}{} = 0.
  \end{equation}
  By using the error-residual identity of Theorem~\ref{Thm::Err_Res}, definitions~\eqref{Eq::Res_two}--\eqref{Eq::Res_two_norm}, and the above equality, one can write

  \begin{align*}
    \Ltwonorm{\uu - \uh}{\dom} & =  \Vert \Res({\uh)} \Vert_{{\vel; }\, \Hbplusdual{\dom}}  = \sup_{\vv \in  \Hbplus{\dom} {\setminus \{0\}}}  \dfrac{(\ff - \inprod{ \vel}{ \grad \sh}, \vv)  + (\uh - \sh, \inprod{\vel}{\grad \vv})}{\Ltwonorm{\inprod{\vel}{\grad \vv}}{}}.
  \end{align*}
  Owing to~\eqref{Eq::mean_zero}, denoting by $\bar \vv_{{\el}}$ the mean value of $\vv$ over the element $\el$, we infer that
  \begin{align*}
    \Ltwonorm{\uu - \uh}{}  \overset{\eqref{Eq::mean_zero}}{{=}} & \sup_{\vv \in  \Hbplus{\dom} {\setminus \{0\}}}  \dfrac{ \displaystyle\sumk \Big[ \integprod{\uh - \sh}{ \inprod{\vel}{\grad \vv}}{\el} +  \integprod{\ff - \inprod{ \vel}{ \grad \sh}}{\vv - \bar \vv_{{\el}}}{\el} \Big]}{\Ltwonorm{\inprod{\vel}{\grad \vv}}{}} \nonumber                                                                                \\
    \overset{{\eqref{Eq::Poincare}}}{\leq} & \sup_{\vv \in  \Hbplus{\dom} {\setminus \{0\}}} \dfrac{ \displaystyle\sumk \Big[ \Ltwonorm{\uh - \sh}{\el} \Ltwonorm{\inprod{\vel}{\grad \vv}}{\el} + \frac{\hk}{{\pi}\vert \vel \vert}  \Ltwonorm{\ff -  \inprod{ \vel}{ \grad \sh}}{\el} \Ltwonorm{\inprod{\vel}{\nabla \vv}}{\el} \Big] }{\Ltwonorm{\inprod{\vel}{\grad \vv}}{}}  \nonumber \\
    \leq                                                          & \lf \{ \sumk \lf[ \Ltwonorm{\uh - \sh}{\el} + \frac{\h_\el}{{\pi}\vert \vel \vert} \Ltwonorm{\ff -  \inprod{ \vel}{ \grad \sh}}{\el} \rt]^2 \rt \}^{1/2}.
  \end{align*}
  Noting that $\inprod{\vel}{\grad \sh} \in \polp{\poldegp}(\triag)$, it follows from~\eqref{Eq::orthog} that $\inprod{\vel}{\grad \sh} = \proj_{{\polp{\poldegp}(\triag)}} \ff$ so that
  \begin{align*}
    \Ltwonorm{\ff -  \inprod{ \vel}{ \grad \sh}}{\el} & =  \Vert (I - \proj_{{\polp{\poldegp}(\triag)}}) \ff \Vert_\el,
  \end{align*}
  which completes the proof.
\end{proof}

\begin{rem}[data oscillation] We call the estimator~\eqref{Eq::Err_Res}  ``data oscillation'' for the following reason: if $\uh$ is piecewise polynomial of degree $\poldeg {\geq 0}$, the error $\Ltwonorm{\uu - \uh}{}$ may converge as $\mathcal{O}(\h^{\poldeg+1})$. By choosing $\poldegp\geq \poldeg$ one obtains, for sufficiently piecewise smooth data $\ff$, the higher convergence order $\mathcal{O}(\h^{{\poldegp}+2})$ for these terms.
\end{rem}

\section{Efficiency and robustness}
\label{Sec::efficiency}
In this section, we show that the error estimate introduced in Theorem~\ref{Thm::Local_second_reli} also gives, up to a constant and up to the data oscillation, a lower bound on the  error $\Ltwonorm{\uu - \uh}{}$. Furthermore, the involved constants are independent of the polynomial degree $\poldeg$ and the velocity $\vel$. Actually, a local efficiency result also holds true, and we start with it.

\subsection{Local efficiency and robustness with respect to advection and polynomial degree}
Our main theorem on local efficiency and robustness is:
\begin{thm}[local efficiency and robustness]
  \label{Thm::Local_second_eff}
  Let $\uu \in \Ltwo(\dom)$ be the weak solution of~\eqref{Eq::weak_two} and let $\uh \in \polp{\poldeg}(\triag)$, $\poldeg \geq 0$, be its approximation. Consider $\sh$ as obtained by Definition~\ref{Defi::local_second} with $\poldegp \geq \poldeg$ and $\ErrNCel{\el}$ as defined in Theorem~\ref{Thm::Local_second_reli}.
  Then, for all $\el \in \triag$, the following holds true
  \begin{align*}
    \ErrNCel{\el} & \leq \cregl  \sum_{\nod \in \nodel}  \Vert \uu - \uh \Vert_\patch + \sum_{\nod \in \nodel} \dfrac{\h_\patch}{{\pi} \vert \vel \vert} \Vert (I - \proj_{{\polp{\poldegp}(\trinod)}}) (\ff \psia) \Vert_\patch.
  \end{align*}
\end{thm}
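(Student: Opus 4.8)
The plan is to localize $\ErrNCel\el$ to the vertex patches and, on each patch, to compare the local reconstruction $\sha$ with the \emph{exact} solution $\uu$ rather than with $\uh$ directly. First I would use the partition of unity $\sum_{\nod\in\nodel}\psia=1$ on $\el$ and the definition~\eqref{eq_sh} of $\sh$ to write $\uh-\sh=\sum_{\nod\in\nodel}\psia(\uh-\sha)$ pointwise on $\el$. Since $0\le\psia\le1$ and $\el\subseteq\patch$, the triangle inequality gives $\ErrNCel\el=\Ltwonorm{\uh-\sh}{\el}\le\sum_{\nod\in\nodel}\Ltwonorm{\psia(\uh-\sha)}{\patch}$, so it suffices to bound each term $\Ltwonorm{\psia(\uh-\sha)}{\patch}$ by $\cregl\Ltwonorm{\uu-\uh}{\patch}$ plus the stated oscillation.

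For a fixed $\nod$, I would split $\psia(\uh-\sha)=\psia(\uh-\uu)+\psia(\uu-\sha)$, controlling the first term crudely by $\Ltwonorm{\psia(\uh-\uu)}{\patch}\le\Ltwonorm{\uu-\uh}{\patch}$. The reason for passing through $\uu$ is that $\psia\uh$ need not lie in $\Hs{1}$ when $\uh$ is discontinuous (the \DG\ and \DPG\ cases), whereas in one space dimension the graph regularity $\inprod{\vel}{\grad\uu}=\ff\in\Ltwo(\dom)$ forces $\uu\in\Hs1(\dom)$, so $\psia(\uu-\sha)\in\Ya$ (with the inflow/outflow boundary conditions of~\eqref{Eq::Va}). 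The key identity then comes from the local problem: since $\psia$ is affine and $\vel$ is a constant scalar, $\inprod{\vel}{\grad(\psia\sha)}\in\polp{\poldegp}(\trinod)=\Yha$, and hence~\eqref{Eq::Recons} reads $\inprod{\vel}{\grad(\psia\sha)}=\proj_{\Yha}\big(\ff\psia+(\inprod{\vel}{\grad\psia})\uh\big)$. Because $(\inprod{\vel}{\grad\psia})\uh\in\polp{\poldeg}(\trinod)\subseteq\Yha$ --- here the hypothesis $\poldegp\ge\poldeg$ is used --- the projection leaves it invariant, so $\inprod{\vel}{\grad(\psia\sha)}=\proj_{\Yha}(\ff\psia)+(\inprod{\vel}{\grad\psia})\uh$. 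Combining this with the product rule and the PDE, $\inprod{\vel}{\grad(\psia\uu)}=\ff\psia+(\inprod{\vel}{\grad\psia})\uu$, and subtracting yields the crucial relation $\inprod{\vel}{\grad(\psia(\uu-\sha))}=(\inprod{\vel}{\grad\psia})(\uu-\uh)+(I-\proj_{\Yha})(\ff\psia)$.

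With this relation in hand, I would finish as follows. Because $\psia(\uu-\sha)\in\Ya$ vanishes on (part of) $\partial\patch$, the Poincar\'e/Friedrichs inequality~\eqref{Eq::PF}, whose one-dimensional constant can be taken as $\h_\patch/\pi$, together with $\Ltwonorm{\grad w}{\patch}=|\vel|^{-1}\Ltwonorm{\inprod{\vel}{\grad w}}{\patch}$, gives $\Ltwonorm{\psia(\uu-\sha)}{\patch}\le\frac{\h_\patch}{\pi|\vel|}\Ltwonorm{\inprod{\vel}{\grad(\psia(\uu-\sha))}}{\patch}$. Inserting the crucial relation and bounding $\Ltwonorm{(\inprod{\vel}{\grad\psia})(\uu-\uh)}{\patch}\le|\vel|\,\Linfnorm{\grad\psia}\,\Ltwonorm{\uu-\uh}{\patch}$ produces $\frac{\h_\patch\Linfnorm{\grad\psia}}{\pi}\Ltwonorm{\uu-\uh}{\patch}+\frac{\h_\patch}{\pi|\vel|}\Ltwonorm{(I-\proj_{\Yha})(\ff\psia)}{\patch}$. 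Adding the crude bound on $\psia(\uh-\uu)$ and recognizing $1+\cpf{\patch}\h_\patch\Linfnorm{\grad\psia}\le\cregl$ (with $\cpf{\patch}=1/\pi$) from the definition of $\cregl$ gives the per-patch estimate; summing over $\nod\in\nodel$ yields the theorem.

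I expect the main obstacle --- and the source of polynomial-degree robustness --- to be the second step: one must never pass through $\Ltwonorm{\sha}{\patch}$ directly, since the norm equivalence in Lemma~\ref{Lem::WP} costs a $\poldegp$-dependent factor $C(\poldegp)$; instead one works only with $\inprod{\vel}{\grad(\psia\sha)}$, whose defining projection~\eqref{Eq::Recons} simultaneously annihilates the $(\inprod{\vel}{\grad\psia})\uh$ term and collapses the data oscillation to the single computable quantity $(I-\proj_{\Yha})(\ff\psia)$. A secondary technical point is the boundary-condition bookkeeping ensuring $\psia(\uu-\sha)\in\Ya$ at inflow and outflow vertices so that the one- or two-sided Poincar\'e constant $\h_\patch/\pi$ applies; the interior-vertex case is generic and the boundary vertices are handled analogously.
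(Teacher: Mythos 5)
Your route is genuinely different from the paper's: the paper never inserts the exact solution, but instead represents $\Ltwonorm{\psia(\uh-\sha)}{\patch}$ by duality, through the function $\vv^{\nod}\in\tYa(\patch)$ solving $\inprod{\vel}{\grad \vv^{\nod}}=\psia(\uh-\sha)$, and then splits the resulting quotient into a residual part, bounded by $\cregl\Ltwonorm{\uu-\uh}{\patch}$ via the cut-off Lemma~\ref{Lem::cutoff_PF}, and an oscillation part, bounded by subtracting elementwise means and using the mean-value Poincar\'e inequality~\eqref{Eq::Poincare}. Your pointwise identity $\inprod{\vel}{\grad(\psia(\uu-\sha))}=(\inprod{\vel}{\grad\psia})(\uu-\uh)+(I-\proj_{\polp{\poldegp}(\trinod)})(\ff\psia)$ is correct (it is the same observation the paper exploits for its term $II$, and it is exactly where $\poldegp\geq\poldeg$ enters), the insertion of $\uu$ is legitimate in one dimension since $\uu\in\Hs{1}(\dom)$, and for interior vertices (where $\psia(\uu-\sha)\in\Hs{1}_0(\patch)$) and inflow vertices (where additionally $\uu$ and $\sha$ vanish at the inflow point, the latter by~\eqref{Eq::bnd}) your argument delivers the stated constants.

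The genuine gap is at outflow vertices. For $\nod\in\nodhout$, definition~\eqref{Eq::Va} gives $\Ya=\Hs{1}_0(\patch)$, and your claim $\psia(\uu-\sha)\in\Ya$ is false there: $\psia(\nod)=1$ and there is no reason for $\uu-\sha$ to vanish at the outflow point, so $\psia(\uu-\sha)$ vanishes only at the interior endpoint of the patch. The sharp Poincar\'e--Friedrichs constant for functions vanishing at a single endpoint of an interval of length $\h_\patch$ is $2\h_\patch/\pi$ (extremal function $\sin(\pi x/(2\h_\patch))$), not $\h_\patch/\pi$. Hence, on the element touching $\bdomp$, your chain of estimates produces the oscillation coefficient $2\h_\patch/(\pi\vert\vel\vert)$ rather than the stated $\h_\patch/(\pi\vert\vel\vert)$, and, under your reading $\cpf{\patch}=1/\pi$, an error coefficient $1+\tfrac{2}{\pi}\h_\patch\Linfnorm{\grad\psia}$ that need not stay below $\cregl$. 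This is precisely what the paper's dual construction avoids: the outflow boundary condition is imposed on the dual variable $\vv^{\nod}\in\tYa(\patch)$, which one is free to choose, rather than on the primal error, whose trace at the outflow is uncontrolled; the oscillation is then estimated elementwise after subtracting means, which is permitted because $(I-\proj_{\polp{\poldegp}(\trinod)})(\ff\psia)$ is orthogonal to piecewise constants. Your proof could be repaired in the same spirit: split $\psia(\uu-\sha)$ into the two transport antiderivatives of the terms on the right-hand side of your identity, both vanishing at the interior endpoint; the oscillation part then also vanishes at the outflow endpoint (its source has zero mean on the patch), recovering the constant $\h_\patch/\pi$ for it, while the remaining part costs $2/\pi\leq 1=\cpf{\patch}$ with the paper's one-dimensional convention for $\cpf{\patch}$. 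As written, however, the outflow case does not yield the theorem's constants.
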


\begin{proof}
  Fix an element $\el \in \triag$. Noting that $\sum_{\nod \in \nodel} \psia \big \vert_\el  = 1$ and using definition~\eqref{eq_sh}, one has
  \begin{equation} \label{eq_el_patch}
    \Vert \uh - \sh \Vert_\el  =  \Bigg\Vert \sum_{\nod \in \nodel} \psia {(} \uh -  \sha {)} \Bigg\Vert_\el \leq   \sum_{\nod \in \nodel} \Vert \psia(\uh - \sha) \Vert_\patch.
  \end{equation}
  Recalling~\eqref{Eq::tVa}, we easily see that for any  vertex $\nod \in \nodh$, there is a unique $\vv^{{\nod}} \in \tYa(\patch)$ such that
  \[
    \inprod{\vel}{\grad \vv^{{\nod}}} =  \psia(\uh - \sha)
  \]
  in $\patch$, and $\vv^{{\nod}}$ is nonzero unless $\psia\uh = \psia\sha$, in which case $\Vert \psia(\uh - \sha) \Vert_\patch =0$. Moreover, first, $\lf(\psia \sha\rt) (\nod) = \sh (\nod) = 0$ when $\nod \in  \nodh^{{\bdomm}}$, using~\eqref{Eq::bnd}, and, second, $\vv^{{\nod}}(\nod) =0$ when $\nod \in  \nodh^{{\bdomp}}$, using~\eqref{Eq::tVa}. Thus, similarly to~\eqref{eq_IPPS_sh}, for any $\nod \in \nodh$, we have
    \[
    \integprod{\psia \sha}{\inprod{\vel}{\grad \vv^{{\nod}}}}{\patch}  + \integprod{\inprod{\vel}{\grad (\psia \sha)}}{\vv^{{\nod}}}{\patch} = 0.
  \]
  From the two above identities, we infer that
  \begin{equation}\label{Eq::tmp_eff_bis}\begin{split}
    \Vert \psia(\uh - \sha) \Vert_\patch & = \dfrac{\integprod{\psia(\uh - \sha)}{ \inprod{\vel}{\grad \vv^{{\nod}}}}{\patch}}{\Vert \inprod{\vel}{\grad \vv^{{\nod}}} \Vert_\patch} \\
    & = \dfrac{\integprod{\psia\uh}{ \inprod{\vel}{\grad \vv^{{\nod}}}}{\patch} + \integprod{\ff \psia + \inprod{\vel}{\grad \psia} \uh}{ \vv^{{\nod}}}{\patch}}{\Vert \inprod{\vel}{\grad \vv^{{\nod}}} \Vert_\patch}  \\
    & \quad +  \dfrac{\integprod{\inprod{\vel}{\nabla(\psia \sha)}}{\vv^{{\nod}}}{\patch} - \integprod{\ff \psia + \inprod{\vel}{\grad \psia} \uh}{ \vv^{{\nod}}}{\patch} }{\Vert \inprod{\vel}{\grad \vv^{{\nod}}} \Vert_\patch} \\
    & \eqdef I + II.
  \end{split}\end{equation}

  For the term $I$, remark first that from~\eqref{eq_impl} and from the definition of $\Ya$ in~\eqref{Eq::Va}, we have
  \begin{equation*}
    \psia \vv^{{\nod}} \in \Ya \subseteq \Hbplus{\patch} \subseteq \Hbplus{\dom}.
  \end{equation*}
  Second, recalling the residual definition~\eqref{Eq::Res_two} and the ultra-weak formulation~\eqref{Eq::weak_two}, we have
  \[
    \integprod{\ff \psia + \inprod{\vel}{\grad \psia} \uh}{ \vv^{{\nod}}}{\patch} + \integprod{ \uh \psia}{\inprod{\vel}{\grad \vv^{{\nod}}}}{\patch} = \pair{\Res(\uh)}{\psia \vv^{{\nod}}} {= - \integprod{\uu - \uh}{\inprod{\vel}{\grad (\psia \vv^{{\nod}})}}{}}.
  \]
  Consequently, employing the Cauchy--Schwarz inequality and Lemma~\ref{Lem::cutoff_PF}, we infer that
  \begin{equation} \label{Eq::tmp_eff}
    I = \dfrac{{- \integprod{\uu - \uh}{\inprod{\vel}{\grad (\psia \vv^{{\nod}})}}{}} }{\Vert \inprod{\vel}{\grad (\psia \vv^{{\nod}})} \Vert_\patch} \dfrac{\Vert \inprod{\vel}{\grad (\psia \vv^{{\nod}})} \Vert_\patch}{\Vert \inprod{\vel}{\grad \vv^{{\nod}}} \Vert_\patch}  \leq \cregl {\Vert \uu - \uh \Vert_\patch}.
  \end{equation}

  To bound the term $II$, we use the fact that $\lf(\inprod{\vel}{\grad \psia}\rt) \uh \in \Yha$ when $\poldegp \geq \poldeg$ and that $\inprod{\vel}{\grad {(\psia} \sha {)}} \in \Yha$, so that~\eqref{Eq::Recons} actually holds pointwise, in the form
  \begin{equation*}
    \inprod{\vel}{\grad \lf(\psia {\sha} \rt) } = \proj_{{\polp{\poldegp}(\trinod)}}(\ff \psia) + \lf(\inprod{\vel}{\grad \psia}\rt) \uh.
  \end{equation*}
  Hence, denoting $\bar \vv^{{\nod}}_{{\el}}$ the mean value of $\vv^{{\nod}}$ over the element $\el \in \trinod$ and using~\eqref{Eq::Poincare}, we obtain
  \begin{align*}
    II & = \dfrac{(\proj_{{\polp{\poldegp}(\trinod)}}(\ff \psia) - \ff \psia , \vv^{{\nod}})_{{\patch}} }{\Vert \inprod{\vel}{\grad \vv^{{\nod}}} \Vert_\patch} {= \dfrac{\sum_{\el \in \trinod}(\proj_{{\polp{\poldegp}(\trinod)}}(\ff \psia) - \ff \psia , \vv^{{\nod}} {- \bar \vv^{{\nod}}_\el})_{\el} }{\Vert \inprod{\vel}{\grad \vv^{{\nod}}} \Vert_\patch}}\\
    & \leq \dfrac{\max_{\el \in \trinod}\h_\el}{{\pi}\vert \vel \vert}  \Vert \proj_{{\polp{\poldegp}(\trinod)}}(\ff \psia) - \ff \psia \Vert_\patch.
  \end{align*}
  The assertion follows by combining the bounds on $I$ and $II$ with~\eqref{eq_el_patch}.
\end{proof}

\subsection{Global efficiency and maximal overestimation}
In this section, we show a result on maximal global overestimation, leaving out the data oscillation term for simplicity.

\begin{lem}[global efficiency and maximal overestimation]
  \label{Lem::Maximal}
  Let the assumptions of Theorem~\ref{Thm::Local_second_eff} be valid and assume in addition that $\psia \ff \in \Yha$ for all $\nod \in \nodh$. Then
  \[
    \Ltwonorm{\uh - \sh}{} \leq 2 \cregl \Ltwonorm{ \uu - \uh }{}.
  \]
\end{lem}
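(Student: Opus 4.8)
The plan is to bound the global nonconformity estimator $\Ltwonorm{\uh-\sh}{}$ by summing the local efficiency bound of Theorem~\ref{Thm::Local_second_eff} over all elements $\el\in\triag$, and then to invoke the additional hypothesis $\psia\ff\in\Yha$ to annihilate the data-oscillation contribution. Under this hypothesis we have $\proj_{\polp{\poldegp}(\trinod)}(\ff\psia)=\ff\psia$ on each patch, so the second (oscillation) term in Theorem~\ref{Thm::Local_second_eff} vanishes identically, leaving
\[
  \ErrNCel{\el} \leq \cregl \sum_{\nod\in\nodel} \Ltwonorm{\uu-\uh}{\patch} \qquad \forall \el\in\triag.
\]
Thus the main remaining work is purely combinatorial: I must pass from the sum of squares $\Ltwonorm{\uh-\sh}{}^2=\sumk \ErrNCel{\el}^2$ to the stated clean bound $2\cregl \Ltwonorm{\uu-\uh}{}$, controlling how many times each patchwise contribution $\Ltwonorm{\uu-\uh}{\patch}$ is counted.

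First I would write $\Ltwonorm{\uh-\sh}{}^2=\sumk \ErrNCel{\el}^2$ and insert the simplified local bound. In one space dimension each element $\el$ has exactly two vertices, so $\sum_{\nod\in\nodel}(\cdots)$ is a sum of two terms; using $(a+b)^2\leq 2(a^2+b^2)$ gives
\[
  \ErrNCel{\el}^2 \leq 2\cregl^2 \sum_{\nod\in\nodel} \Ltwonorm{\uu-\uh}{\patch}^2 .
\]
Summing over $\el$ and exchanging the order of summation, each patch norm $\Ltwonorm{\uu-\uh}{\patch}^2$ appears once for every element in $\trinod$, i.e.\ at most twice since $\card=2$ is the maximal overlap. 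I would also use that $\sum_{\nod\in\nodh}\Ltwonorm{\uu-\uh}{\patch}^2\leq \card\,\Ltwonorm{\uu-\uh}{}^2=2\Ltwonorm{\uu-\uh}{}^2$, because every element lies in exactly two patches so each piece of the global $\Ltwo$-norm is counted at most $\card$ times. Chaining these gives a bound of the form $\Ltwonorm{\uh-\sh}{}^2\leq c\,\cregl^2\Ltwonorm{\uu-\uh}{}^2$ and taking square roots yields the claimed factor $2\cregl$.

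The delicate point, and the one I would be most careful about, is tracking the overlap constants so that the two factors of $\card=2$ (one from $(a+b)^2\leq 2(a^2+b^2)$ together with the two vertices per element, the other from the finite-overlap bound $\sum_\nod \Ltwonorm{\cdot}{\patch}^2\leq \card\Ltwonorm{\cdot}{}^2$) combine to give exactly the constant $4\cregl^2$ under the square root, so that $\sqrt{4\cregl^2}=2\cregl$ matches the stated bound. I expect the bookkeeping to close cleanly precisely because in $d=1$ every element belongs to two patches and every patch consists of two elements, making the double-counting symmetric. No genuinely new analytic ingredient is needed beyond Theorem~\ref{Thm::Local_second_eff}; the only subtlety is ensuring the vanishing of the oscillation term under $\psia\ff\in\Yha$ and verifying that the constant does not inflate beyond~$2\cregl$.
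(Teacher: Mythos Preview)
Your strategy is sound, but the bookkeeping does not close at the stated constant: your route yields $2\sqrt{2}\,\cregl$, not $2\cregl$. You correctly identify three separate steps that each contribute a factor~$2$: (i) $(a+b)^2\le 2(a^2+b^2)$ on each element; (ii) exchanging the order of summation, where each term $\Ltwonorm{\uu-\uh}{\patch}^2$ is counted $|\trinod|\le 2$ times; and (iii) the overlap bound $\sum_{\nod}\Ltwonorm{\uu-\uh}{\patch}^2\le 2\Ltwonorm{\uu-\uh}{}^2$. That gives $8\cregl^2$ under the square root, not $4\cregl^2$. In your final paragraph you conflate (i) and (ii) into a single ``factor of $\card$'', but they are distinct: the $2$ in $(a+b)^2\le 2(a^2+b^2)$ already accounts for the number of vertices of $\el$, and the exchange step~(ii) then produces an \emph{additional} factor $|\trinod|$ because the quantity $\Ltwonorm{\uu-\uh}{\patch}^2$ you are summing is a full patch norm, not localized to $\el$.

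The paper avoids this extra factor by not invoking Theorem~\ref{Thm::Local_second_eff} as a black box. Instead it reopens its proof and works with the elementwise pieces $\Ltwonorm{\psia(\uh-\sha)}{\el}$ \emph{before} bounding by the error. The key gain is that
\[
  \sum_{\el\in\triag}\sum_{\nod\in\nodel}\Ltwonorm{\psia(\uh-\sha)}{\el}^2
  \;=\;\sum_{\nod\in\nodh}\Ltwonorm{\psia(\uh-\sha)}{\patch}^2
\]
is an \emph{equality}, since the element pieces reassemble exactly into the patch integral. Only after this reassembly does one apply the patchwise bound $\Ltwonorm{\psia(\uh-\sha)}{\patch}\le \cregl\Ltwonorm{\uu-\uh}{\patch}$ from~\eqref{Eq::tmp_eff_bis}--\eqref{Eq::tmp_eff} (with $II=0$ thanks to $\psia\ff\in\Yha$), followed by the single overlap factor~$2$. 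This gives $2\cdot\cregl^2\cdot 2=4\cregl^2$ and hence the sharp constant $2\cregl$. To repair your argument, replace the use of the \emph{statement} of Theorem~\ref{Thm::Local_second_eff} by the intermediate estimate on $\Ltwonorm{\psia(\uh-\sha)}{\patch}$ from its proof.
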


\begin{proof} Proceeding as in the proof of Theorem~\ref{Thm::Local_second_eff}, one has
\begin{align*}
    \Ltwonorm{\uh - \sh}{}^2 & = \sumk  \LtwonormBigg{ \sum_{\nod \in \nodel} \psia \lf( \uh - \sha \rt)}{\el}^2  \leq  2 \sumk  \sum_{\nod \in \nodel}  \Ltwonorm{ \psia \lf( \uh - \sha \rt)}{\el}^2       \\
    & = 2 \sum_{\nod \in \nodh}  \Ltwonorm{ \psia \lf( \uh - \sha \rt)}{\patch}^2   \overset{\eqref{Eq::tmp_eff_bis},\eqref{Eq::tmp_eff}}{\leq}   2     \cregl^2  \sum_{\nod \in \nodh} \Vert \uu - \uh \Vert^2_\patch.
\end{align*}
Another estimate for the overlapping of the patches yields $\sum_{\nod \in \nodh} \Ltwonorm{{\uu - \uh}}{\patch}^2 \leq 2 \Ltwonorm{{\uu - \uh}}{}^2$ and leads to the assertion.
\end{proof}

\section{Numerical experiments}
\label{sec:experiments}
We provide in this section a numerical illustration of our results in one space dimension. In the first set of examples in Section~\ref{Sec::Exp_vel}, we consider a polynomial right-hand side function $\ff$ and study the robustness of our estimators with respect to the velocity field  $\vel$. Then, in Section~\ref{Sec::Exp_deg}, we consider a more general case to investigate the effect of the increase of the polynomial degree  on the quality of the estimators. Henceforth, we consider $\dom = (0, 1)$ with the mesh $\triag = \{ \el_i \}_{{1}}^n$ with $\el_i =  [x_{i-1}, x_i ]$.
In the experiments, the numerical solution $\uh \in \polp{\poldeg}(\triag)$ will be computed by two methods:
\begin{itemize}
  \item[-] the {\DPG} method~\eqref{Eq::DPG} with the finite-dimensional spaces as in Example~\ref{Exam::DPG}, $\poldeg \geq 0$,

  \item[-] the {\DG} method~\eqref{Eq::DG} with the finite-dimensional spaces as in Example~\ref{Exam::DG}, $\poldeg \geq 1$.
\end{itemize}
The \emph{effectivity index} is defined as
$
  I_{\mathrm{eff}} \defeq \dfrac{\Err}{\Ltwonorm{\uu - \uh}{}},
$
\ie, as the ratio of the estimated and the actual error from Theorem~\ref{Thm::Local_second_reli}.

\subsection{Robustness with respect to the velocity}
\label{Sec::Exp_vel}
Here we consider the advection problem~\eqref{Eq::adv_multiD} with the piecewise quadratic right-hand side defined as
\begin{equation*}
  f(x) =  x^2 + x + \sin(2 \pi x_{i-1}), \qquad \text{on } \el_i {, \, 1 \leq i \leq n,}
\end{equation*}
whose exact solution can be easily computed by integration of the right-hand side. The numerical solutions $\uh$ are obtained by both {\DPG} and {\DG} methods with $\poldeg =1,2$.

If one sets $\poldegp= 2$ in Definition~\ref{Defi::local_second}, the oscillation estimators $\ErrOscf{\el}$ from~\eqref{Eq::Err_Res} disappear. In this case, actually, since $\ff \in \polp{2}({\triag})$, one has $\sh \in \polp{3}({\triag}) \cap {\Hbminus{\dom}}$, see~\eqref{Eq::bnd}. Moreover, owing to~\eqref{Eq::orthog},
$
  \inprod{\vel}{\grad \sh} = \ff
$
pointwise. Hence, $\sh$ in this setting coincides with the exact solution $\uu$ and $I_\mathrm{eff} = 1$ (up to the machine precision), which is numerically confirmed in Tables~\ref{Tab::robust_DPG} and~\ref{Tab::robust_DG}.

To asses the behavior in the case where the reconstruction $\sh$ does not coincide with the exact solution, we also test the choice $\poldegp= 1$ in Definition~\ref{Defi::local_second} together with $\poldeg = 1$. We observe in Tables~\ref{Tab::robust_DPG} and \ref{Tab::robust_DG} that there is still no dependency of the efficiency of our estimates on the magnitude of the velocity $\vel$, in confirmation of the theory. Actually, one remarks that solely scaling $\vel$ in~\eqref{Eq::adv_multiD} by a factor implies the same scaling of all the exact solution $\uu$, the numerical approximations $\uh$, the error $\Vert \uu - \uh \Vert$, the reconstruction $\sh$, and of all the estimators in Theorem~\ref{Thm::Local_second_reli} by the inverse of this factor, so that the effectivity indices actually remain intact here on each given mesh.
Moreover, we numerically observe asymptotic exactness with mesh refinement, for both schemes tested.}

\begin{table}[H]
  \centering
  \caption{Effectivity indices $I_{\mathrm{eff}}$ for different values of the velocity  $\vel$ and $\uh$ obtained by  the {\DPG} method~\eqref{Eq::DPG}}
  \begin{tabular}{|c|c|*{4}{c} c |}
    \hline
    \multicolumn{2}{|c|}{\bm{$\poldeg = \poldegp = 2$} } & \multicolumn{5}{c|}{$\vel$} \\ \hline
    \# Elements & DOF($\uh$) & \makebox[3em]{$10^{-4}$} & \makebox[3em]{$10^{-2}$} & \makebox[3em]{$10^{0}$} & \makebox[3em]{$10^{2}$} & \makebox[3em]{$10^{4}$} \\  \hline

    4           & 12         & 1.0                      & 1.0                      & 1.0                     & 1.0                     & 1.0
    \\ 16 & 48 & 1.0 & 1.0 & 1.0 & 1.0 & 1.0
    \\ 64 & 192 & 1.0 & 1.0 & 1.0 & 1.0 & 1.0
    \\ 256 & 768 & 1.0 & 1.0 & 1.0 & 1.0 & 1.0

    \\
    \hline
    \hline
    \multicolumn{2}{|c|}{\bm{$\poldeg = \poldegp = 1$} } & \multicolumn{5}{c|}{$\vel$} \\ \hline
    \# Elements & DOF($\uh$) & \makebox[3em]{$10^{-4}$} & \makebox[3em]{$10^{-2}$} & \makebox[3em]{$10^{0}$} & \makebox[3em]{$10^{2}$} & \makebox[3em]{$10^{4}$} \\  \hline

    4           & 8          & 1.234                    & 1.234                    & 1.234                   & 1.234                   & 1.234
    \\ 16 & 32 & 1.058 & 1.058 & 1.058 & 1.058 & 1.058
    \\ 64 & 128 & 1.014 & 1.014 & 1.014 & 1.014 & 1.014
    \\ 256 & 512 & 1.004 & 1.004 & 1.004 & 1.004 & 1.004

    \\
    \hline
  \end{tabular}
  \label{Tab::robust_DPG}
\end{table}

\begin{table}[H]
  \centering
  \caption{Effectivity indices $I_{\mathrm{eff}}$ for different values of the velocity  $\vel$ and $\uh$ obtained by the {\DG} method~\eqref{Eq::DG}}
  \begin{tabular}{|c|c|*{4}{c} c |}
    \hline
    \multicolumn{2}{|c|}{\bm{$\poldeg = \poldegp = 2$} } & \multicolumn{5}{c|}{$\vel$} \\ \hline
    \# Elements & DOF($\uh$) & \makebox[3em]{$10^{-4}$} & \makebox[3em]{$10^{-2}$} & \makebox[3em]{$10^{0}$} & \makebox[3em]{$10^{2}$} & \makebox[3em]{$10^{4}$} \\  \hline

    4           & 12         & 1.0                      & 1.0                      & 1.0                     & 1.0                     & 1.0
    \\ 16 & 48 & 1.0 & 1.0 & 1.0 & 1.0 & 1.0
    \\ 64 & 192 & 1.0 & 1.0 & 1.0 & 1.0 & 1.0
    \\ 256 & 768 & 1.0 & 1.0 & 1.0 & 1.0 & 1.0

    \\
    \hline
    \hline
    \multicolumn{2}{|c|}{\bm{$\poldeg = \poldegp = 1$} } & \multicolumn{5}{c|}{$\vel$} \\ \hline
    \# Elements & DOF($\uh$) & \makebox[3em]{$10^{-4}$} & \makebox[3em]{$10^{-2}$} & \makebox[3em]{$10^{0}$} & \makebox[3em]{$10^{2}$} & \makebox[3em]{$10^{4}$} \\  \hline

    4           & 8          & 1.126                    & 1.126                    & 1.126                   & 1.126                   & 1.126
    \\ 16 & 32 & 1.032 & 1.032 & 1.032 & 1.032 & 1.032
    \\ 64 & 128 & 1.008 & 1.008 & 1.008 & 1.008 & 1.008
    \\ 256 & 512 & 1.002 & 1.002 & 1.002 & 1.002 & 1.002

    \\
    \hline
  \end{tabular}
  \label{Tab::robust_DG}
\end{table}

\subsection{Robustness with respect to the polynomial degree}
\label{Sec::Exp_deg}

We now consider the advection problem~\eqref{Eq::adv_multiD} with a non-polynomial right-hand side $\ff(x) = \tan^{-1}(x)$ and $\vel = 1$, for different polynomial degrees $0 \leq \poldeg \leq 4$. The results are presented in Table~\ref{Tab::conv_DPG}
for the {\DPG} method and in Table~\ref{Tab::conv_DG} for the {\DG} method. We always set $\poldegp= \poldeg$.
We use the notation $\ErrNC \defeq \lf( \sumk \ErrNCel{\el}^2 \rt)^{1/2}$ and $\ErrOsc \defeq \lf( \sumk \ErrOscf{\el}^2 \rt)^{1/2}$. The mesh is refined uniformly until the error estimator $\Err \leq 10^{-14}$; we encountered some irregularities in $I_{\mathrm{eff}}$ beyond this point due to machine precision.
We observe optimal convergence order  of the estimators and the independence of $I_{\mathrm{eff}}$ from the polynomial degree, in accordance with the theory.

\begin{table}[H]
  \centering
  \caption{Convergence of the error $\Vert \uu - \uh \Vert$, the error estimators $\Err$, $\ErrNC$, and $\ErrOsc$, and the effectivity indices $I_{\mathrm{eff}}$ for the {\DPG} method~\eqref{Eq::DPG} with different polynomial degrees $k$}
  \begin{tabular}{| c | c   | c |   c  |c |  c |  c |}
    \hline

    \multicolumn{7}{|c|}{$\bm{\poldeg = 0, \poldegp= 0}$ } \\ \hline
    \# Elements & \# DOF($\uh$) & $\ErrNC$   & $\ErrOsc$  & $\Vert \uu - \uh \Vert$ & $\Err$  & $I_\mathrm{eff}$ \\ \hline

      4 & 4 & 3.574e-2 & 1.446e-2 & 3.562e-2 & 4.809e-2 & 1.35\\
      16 & 16 & 8.936e-3 & 9.040e-4 & 8.934e-3 & 9.656e-3 & 1.08\\
      64 & 64 & 2.234e-3 & 5.650e{-5} & 2.234e-3 & 2.278e-4 & 1.02\\
      256 & 256 & 5.585e-4 & 3.531e{-6} & 5.585e-4 & 5.612e-4 & 1.01\\
      1024 & 1024 & 1.396e-4 & 2.207e{-7} & 1.396e-4 & 1.398e-4 & 1.00

    \\ \hline \hline

    \multicolumn{7}{|c|}{\bm{$\poldeg = 1, \poldegp= 1$} } \\ \hline
    \# Elements & \# DOF($\uh$) & $\ErrNC$   & $\ErrOsc$  & $\Vert \uu - \uh \Vert$ & $\Err$  & $I_\mathrm{eff}$ \\ \hline

    4           & 8             & 1.867e-3   & 3.074e-4   & 1.868e-3                & 2.147e-3   & 1.15
    \\ 16 & 32 & 1.167e-4 & 4.811e{-6} & 1.167e-4 & 1.210e-4 & 1.04
    \\ 64 & 128 & 7.294e{-6} & 7.518e{-8} & 7.294e{-6} & 7.361e{-6} & 1.01
    \\ 256 & 512 & 4.559e{-7} & 1.175e{-9} & 4.559e{-7} & 4.569e{-7} & 1.00
    \\ 1024 & 2048 & 2.849e{-8} & 1.836e{-11} & 2.849e{-8} & 2.851e{-8} & 1.00

    \\ \hline \hline

    \multicolumn{7}{|c|}{\bm{$\poldeg = 2, \poldegp= 2$} } \\ \hline
    \# Elements & \# DOF($\uh$) & $\ErrNC$   & $\ErrOsc$  & $\Vert \uu - \uh \Vert$ & $\Err$  & $I_\mathrm{eff}$ \\ \hline

    4           & 12            & 2.598e{-5} & 1.246e{-5} & 2.600e{-5}              & 3.467e{-5} & 1.33
    \\ 16 & 48 & 4.066e{-7} & 4.897e{-8} & 4.066e{-7} & 4.356e{-7} & 1.07
    \\ 64 & 192 & 6.354e{-9} & 1.914e{-10} & 6.354e{-9} & 6.462e{-9} & 1.02
    \\ 256 & 768 & 9.928e{-11} & 7.475e{-13} & 9.928e{-11} & 9.97e{-11} & 1.00
    \\ 1024 & 3072 & 1.551e{-12} & 2.920e{-15} & 1.552e{-12} & 1.553e{-12} & 1.00

    \\ \hline \hline

    \multicolumn{7}{|c|}{\bm{$\poldeg = 3, \poldegp= 3$} } \\ \hline
    \# Elements & \# DOF($\uh$) & $\ErrNC$   & $\ErrOsc$  & $\Vert \uu - \uh \Vert$ & $\Err$  & $I_\mathrm{eff}$ \\ \hline

    4           & 16            & 7.852e{-7} & 5.666e{-7} & 7.859e{-7}              & 1.271e{-6} & 1.62
    \\ 16 & 64 & 3.085e{-9} & 5.579e{-10} & 3.085e{-9} & 3.503e{-9} & 1.14
    \\ 64 & 256 & 1.205e{-11} & 5.451e{-13} & 1.205e{-11} & 1.245e{-11} & 1.03
    \\ 256 & 1024 & 4.718e{-14} & 5.324e{-16} & 4.730e{-14} & 4.756e{-14} & 1.01

    \\ \hline
    \multicolumn{7}{|c|}{\bm{$\poldeg = 4, \poldegp= 4$} } \\ \hline
    \# Elements & \# DOF($\uh$) & $\ErrNC$   & $\ErrOsc$  & $\Vert \uu - \uh \Vert$ & $\Err$  & $I_\mathrm{eff}$ \\ \hline

    4           & 20            & 2.847e{-8} & 2.666e{-8} & 2.851e{-8}              & 5.133e{-8} & 1.80
    \\ 8 & 40 & 8.957e{-10} & 4.204e{-10} & 8.959e{-10} & 1.217e{-9} & 1.36
    \\ 16 & 80 & 2.804e{-11} & 6.582e{-12} & 2.804e{-11} & 3.282e{-11} & 1.17
    \\ 32 & 160 & 8.765e{-13} & 1.029e{-13} & 8.765e{-13} & 9.482e{-13} & 1.08
    \\ 64 & 320 & 2.742e{-14} & 1.608e{-15} & 2.753e{-14} & 2.852e{-14} & 1.04
    \\ \hline
  \end{tabular}
  \label{Tab::conv_DPG}
\end{table}

\begin{table}[H]
  \centering
  \caption{Convergence of the error $\Vert \uu - \uh \Vert$, the error estimators $\Err$, $\ErrNC$, and $\ErrOsc$, and the effectivity indices $I_{\mathrm{eff}}$ for the {\DG} method~\eqref{Eq::DG} with different polynomial degrees $k$}
  \begin{tabular}{| c | c   | c |   c  |c |  c |  c |}
    \hline

    \multicolumn{7}{|c|}{\bm{$\poldeg = 1, \poldegp= 1$} } \\ \hline
    \# Elements & \# DOF($\uh$) & $\ErrNC$   & $\ErrOsc$  & $\Vert \uu - \uh \Vert$ & $\Err$  & $I_\mathrm{eff}$ \\ \hline

    4           & 8             & 3.048e{-3} & 3.074e-4   & 3.021e-3                & 3.327e-3   & 1.10
    \\ 16 & 32 & 1.906e-4 & 4.811e{-6} & 1.901e-4 & 1.949e-4 & 1.03
    \\ 64 & 128 & 1.191e{-5} & 7.518e{-8} & 1.190e{-5} & 1.198e{-5} & 1.01
    \\ 256 & 512 & 7.445e{-7} & 1.175e{-9} & 7.444e{-7} & 7.455e{-7} & 1.00
    \\ 1024 & 2048 & 4.653e{-8} & 1.836e{-11} & 4.653e{-8} & 4.655e{-8} & 1.00

    \\ \hline \hline

    \multicolumn{7}{|c|}{\bm{$\poldeg = 2, \poldegp= 2$} } \\ \hline
    \# Elements & \# DOF($\uh$) & $\ErrNC$   & $\ErrOsc$  & $\Vert \uu - \uh \Vert$ & $\Err$  & $I_\mathrm{eff}$ \\ \hline

    4           & 12          & 4.210e{-5} & 1.246e{-5} & 4.450e{-5}              & 4.843e{-5} & 1.19
    \\ 16 & 48 & 6.299e{-7} & 4.897e{-8} & 6.307e{-7} & 6.582e{-7} & 1.04
    \\ 64 & 192 & 9.844e{-9} & 1.914e{-10} & 9.847e{-9} & 9.951e{-9} & 1.01
    \\ 256 & 768 & 1.538e{-10} & 7.475e{-13} & 1.538e{-10} & 1.542e{-10} & 1.00
    \\ 1024 & 3072 & 2.403e{-12} & 2.921e{-15} & 2.403e{-12} & 2.405e{-12} & 1.00

    \\ \hline \hline

    \multicolumn{7}{|c|}{\bm{$\poldeg = 3, \poldegp= 3$} } \\ \hline
    \# Elements & \# DOF($\uh$) & $\ErrNC$   & $\ErrOsc$  & $\Vert \uu - \uh \Vert$ & $\Err$  & $I_\mathrm{eff}$ \\ \hline

    4           & 16            & 1.186e{-6} & 5.666e{-7} & 1.169e{-6}              & 1.658e{-6} & 1.42
    \\ 16 & 64 & 4.664e{-9} & 5.579e{-10} & 4.647e{-9} & 5.075e{-9} & 1.09
    \\ 64 & 256 & 1.822e{-11} & 5.451e{-13} & 1.821e{-11} & 1.861e{-11} & 1.02
    \\ 256 & 1024 & 7.172e{-14} & 5.324e{-16} & 7.181e{-14} & 7.210e{-14} & 1.00

    \\ \hline
    \multicolumn{7}{|c|}{\bm{$\poldeg = 4, \poldegp= 4$} } \\ \hline
    \# Elements & \# DOF($\uh$) & $\ErrNC$   & $\ErrOsc$  & $\Vert \uu - \uh \Vert$ & $\Err$  & $I_\mathrm{eff}$ \\ \hline

    4           & 20            & 4.240e{-8} & 2.666e{-8} & 4.252e{-8}              & 6.453e{-8} & 1.52
    \\ 8 & 40 & 1.335e{-9} & 4.204e{-10} & 1.336e{-9} & 1.645e{-9} & 1.23
    \\ 16 & 80 & 4.179e{-11} & 6.582e{-12} & 4.180e{-11} & 4.647e{-11} & 1.11
    \\ 32 & 160 & 1.307e{-12} & 1.029e{-13} & 1.307e{-12} & 1.377e{-12} & 1.05
    \\ 64 & 320 & 4.083e{-14} & 1.608e{-15} & 4.094e{-14} & 4.192e{-14} & 1.02
    \\ \hline
  \end{tabular}
  \label{Tab::conv_DG}
\end{table}

\section{Extension to multiple space dimensions}
\label{Sec::extend}
In this section, we investigate a possible extension of the ideas presented so far to the multi-dimensional case. We consider the advection equation~\eqref{Eq::adv_multiD} on a simply-connected Lipschitz polytope $\dom \subset \R^\dimx$ for $\dimx \geq 2$. The velocity field $\vel(x) \in \Cnt^1(\overline \dom; \R^\dimx)$ is considered to be divergence-free. We also assume that $\vel$ is  \emph{$\dom$-filling}, \ie, its trajectories starting from the inflow boundary $\partial_{-} \dom$ fill $\overline \dom$ almost everywhere in a finite time. A sufficient condition for the validity of this property is given by~\cite{azerad1996inegalite} (see Lemma~\ref{Lem::om_fil} below). One can find necessary and sufficient conditions in~\cite[Lemma 2.3]{devinatz1974asymptotic}, see also~\cite{ayuso2009discontinuous, dahmen2012adaptive, cantin2017well, cantin2017edge}.

%

\subsection{Spaces}

We start by introducing proper generalizations of~\eqref{eq_Hpm}.
Let us define the operator related to~\eqref{Eq::adv_multiD} and its formal adjoint as
\begin{equation*}
  \linop \colon {\vv} \mapsto \inprod{\vel}{\grad \vv  }, \qquad \qquad  \linop^* \colon {\vv} \mapsto -\divg \lf( \vel {\vv}  \rt) {= - \inprod{\vel}{\grad \vv}},
\end{equation*}
together with the following graph spaces
\begin{align*}
  \Hgrph{\dom} & \eq \lf\{ \vv \in \Ltwo(\dom), \, \linop \vv \in \Ltwo(\dom)  \rt\}, &   & \Hgrphad{\dom} \eq \lf\{ \vv \in \Ltwo(\dom), \, \linop^* \vv \in \Ltwo(\dom)  \rt\}.
\end{align*}
Then $\linop: \Hgrph{\dom} \to \Ltwo(\dom)$ and $\linop^*\colon \Hgrphad{\dom} \to \Ltwo(\dom)$, and $\Hgrph{\dom} = \Hgrphad{\dom}$.
Moreover, one can define the following subspaces of the graph spaces with incorporated boundary conditions:
\begin{align*}
  \Hgrphz{\dom}   & \eq  \lf\{ \vv \in \Hgrph{\dom},  \vv      = 0 \text{ on } {\bdomm} \rt\}, \\
  \Hgrphadz{\dom} & \eq  \lf\{ \vv \in \Hgrphad{\dom}, \vv  = 0 \text{ on } {\bdomp}  \rt\}.
\end{align*}
These definitions are consistent extensions from $\dimx=1$ in that the spaces $\Hgrph{\dom}$, $\Hgrphad{\dom}$, $\Hgrphz{\dom}$, $\Hgrphadz{\dom}$ become respectively $\Hs{1}(\dom)$, $\Hs{1}(\dom)$, $\Hbminus{\dom}$, and $\Hbplus{\dom}$. One might confer with \cite[p.~131]{suli1999posteriori} and \cite[Theorems~2.1 and 2.2]{houston1999posteriori} for the justification of the trace operator which is discussed as an operator from $\Hgrph{\dom}$ to $\Hs{-\frac{1}{2}}(\bdomm)$ (or from $\Hgrphad{\dom}$ to $\Hs{-\frac{1}{2}}(\bdomp)$, respectively). The extension to $\Ltwo(\vert \inprod{\vel}{\nn} \vert; \bdomm)$ is possible under slightly more restrictive conditions, see~\cite[page~133]{suli1999posteriori}, \cite[Lemma~3.1]{ern2006discontinuous} and more recently \cite[Proposition~2.3]{dahmen2012adaptive}. Moreover the following integration-by-parts formula holds true:
\begin{equation}
  \label{Eq::IBP_multiD}
  \integprod{\vv}{\inprod{\vel}{\grad \ww}}{} + \integprod{\inprod{\vel}{\grad \vv}}{\ww}{} = \integprod{\inprod{\vel}{\nn} \vv}{\ww}{} \qquad \forall \vv \in \Hgrph{\dom}, \quad     \forall \ww \in \Hs{1}(\dom).
\end{equation}
The result~\eqref{Eq::IBP_multiD} can be extended to $\ww \in \Hgrphad{\dom}$.

\subsection{Streamline Poincar\'e inequality}

The following sufficient condition for the field $\vel$ to be $\dom$-filling is given in~\cite{azerad1996inegalite}:

\begin{lem}[$\dom$-filling sufficient condition]
  \label{Lem::om_fil}
  Let $\vel \in \Cnt^1(\overline \dom; \R^\dimx)$ and assume that  there is a fixed unit vector $\bm k \in \R^\dimx$ and a real number $\alpha > 0$ such that
  \begin{equation}
    \label{Eq::filling}
    \forall x \in \overline \dom, \qquad \inprod{\vel}{\bm k} \geq \alpha.
  \end{equation}
  Then $\vel$ is $\dom$-filling.
\end{lem}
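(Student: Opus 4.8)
The plan is to analyze the integral curves (characteristics) of $\vel$ and to exploit~\eqref{Eq::filling} as a uniform monotonicity in the distinguished direction $\bm k$. Since $\vel \in \Cnt^1(\overline\dom; \R^\dimx)$ is Lipschitz on the compact set $\overline\dom$, I first extend it componentwise to a globally Lipschitz field on $\R^\dimx$ (e.g.\ by a McShane extension) and invoke the Cauchy--Lipschitz theorem to obtain, for every $x \in \overline\dom$, a unique maximal integral curve $t \mapsto X_x(t)$ solving $\dot X_x = \vel(X_x)$, $X_x(0) = x$. Recast in these terms, the $\dom$-filling property amounts to showing that for almost every $x \in \dom$ there is a finite backward time at which $X_x$ meets the inflow boundary $\bdomm$.

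The key quantitative ingredient is the scalar $\varphi(t) \defeq \inprod{X_x(t)}{\bm k}$. Along any curve lying in $\overline\dom$ one has $\varphi'(t) = \inprod{\vel(X_x(t))}{\bm k} \geq \alpha > 0$ by~\eqref{Eq::filling}, so $\varphi$ is strictly increasing at rate at least $\alpha$. Because $\dom$ is bounded, the continuous map $x \mapsto \inprod{x}{\bm k}$ attains a minimum $m$ and a maximum $M$ on $\overline\dom$; consequently any piece of a characteristic contained in $\overline\dom$ has time length at most $(M-m)/\alpha$. This is exactly the uniform \emph{finite transit time} demanded by the definition.

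Next I would trace a given $x \in \dom$ backward. Since $\varphi$ strictly decreases as $t$ decreases and cannot fall below $m$ while the curve stays in $\overline\dom$, there is a first backward exit time $t_-(x) \geq -(M-m)/\alpha$ with $y(x) \defeq X_x(t_-(x)) \in \bdom$ and $X_x(t) \in \dom$ for $t \in (t_-(x), 0]$; indeed, $X_x(t_-(x))$ cannot be interior, for otherwise the curve would remain in $\dom$ for slightly earlier times, contradicting minimality. At such a point the curve enters $\dom$ in forward time, whence $\inprod{\vel(y(x))}{\nn(y(x))} \leq 0$. Thus $x$ is reached in the finite time $-t_-(x)$ by the characteristic issued from the boundary point $y(x)$.

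The main obstacle, and the only genuinely delicate point, is to rule out, up to a Lebesgue-null set, the possibility that $y(x)$ lands on the characteristic boundary $\bdomz = \{\inprod{\vel}{\nn} = 0\}$ rather than strictly on $\bdomm$, and thereby to conclude that the trajectories emanating from $\bdomm$ cover $\overline\dom$ almost everywhere. Here I would use that $\vel$ is divergence-free, so that by Liouville's theorem the flow $t \mapsto X_\bullet(t)$ preserves Lebesgue measure; combined with the uniform transit-time bound, the map $(y,s) \mapsto X_y(s)$ from $\bdomm \times (0,(M-m)/\alpha)$ is onto $\dom$ save for the image of the lower-dimensional set $\bdomz$, which is negligible. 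The measure-theoretic bookkeeping of this last step is precisely what is carried out in~\cite{azerad1996inegalite}, to which I would defer for the full multi-dimensional argument; in the one-dimensional setting of the paper it is trivial, since the single characteristic direction crosses the interval $\dom$ monotonically and meets $\bdomm$ in finite time.
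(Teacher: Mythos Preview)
The paper does not supply its own proof of this lemma; it is stated without argument and attributed to~\cite{azerad1996inegalite}. There is therefore nothing to compare against beyond noting that your sketch reproduces the standard reasoning behind that reference: the scalar $t\mapsto \inprod{X_x(t)}{\bm k}$ increases at rate at least $\alpha$ along any characteristic portion contained in $\overline\dom$, which immediately caps the transit time by $(M-m)/\alpha \leq \mathrm{diam}(\dom)/\alpha$ and forces every backward trajectory to reach $\bdom$ in uniformly bounded finite time. You also correctly isolate the one genuinely delicate point, namely ruling out (up to a null set) backward trajectories that first meet $\bdom$ on the characteristic part $\bdomz$, and your decision to hand that measure-theoretic step to~\cite{azerad1996inegalite} is exactly what the paper does.

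One small caveat: your Liouville argument invokes the divergence-free condition on $\vel$, which is a standing hypothesis of Section~\ref{Sec::extend} but is not listed among the assumptions of the lemma itself. The $\dom$-filling conclusion does not actually require incompressibility; the null-set claim can be obtained from the $\Cnt^1$ regularity of the flow map alone. Also, the assertion that ``the image of the lower-dimensional set $\bdomz$ is negligible'' is not automatic, since $\bdomz$ may have positive $(\dimx-1)$-measure and its forward flow image is then a priori $\dimx$-dimensional; the correct statement is that the set of interior points whose \emph{first} backward exit lies on $\bdomz$ is null, which is a slightly different (and more delicate) claim. This is precisely the bookkeeping you defer to the reference, so the proposal is fine as a sketch.
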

For $\dom$-filling $\vel$, we can extend the inequality~\eqref{Eq::Friedrichs} along the flow of $\vel$, \cf~\cite{azerad1996inegalite}:
\begin{lem}[streamline Poincar\'e inequality]
  \label{Lem::Friedrichs_stream}
  Let the field $\vel \in \Cnt^1(\overline \dom; \R^\dimx)$ be divergence-free and $\dom$-filling. Then there exists a streamline Poincar\'e constant $\cpcrv$ such that
  \begin{equation}
    \label{Eq::poincare}
    \Ltwonorm{\vv}{} \leq \cpcrv \Ltwonorm{\inprod{\vel}{\grad \vv}}{} \qquad {\forall} \vv \in \Hgrphz{\dom}.
  \end{equation}
  The constant $\cpcrv$ is bounded by $\cpcrv\leq 2 T$, where $T$ is the longest time that trajectories of the field $\vel$ spend in the domain $\dom$. In particular, $T \leq \mathrm{diam}(\dom)/{\alpha}$ under the assumption~\eqref{Eq::filling}.
\end{lem}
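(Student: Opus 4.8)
The plan is to build an explicit weight that linearizes the transport operator along the flow of $\vel$ and then integrate by parts, which will yield the constant $2T$ directly. First I would introduce the \emph{residence-time} function $\tau\colon\dom\to[0,\infty)$, where $\tau(x)$ is the time elapsed along the streamline of $\vel$ passing through $x$ since it entered $\dom$ through $\bdomm$. The $\dom$-filling hypothesis is precisely what guarantees that $\tau(x)$ is finite for almost every $x\in\dom$, and I set $T\defeq\sup_{x\in\dom}\tau(x)$ (essential supremum). By construction $\tau$ vanishes on $\bdomm$, satisfies $0\le\tau\le T$, and, since it increases at unit rate along trajectories, solves $\inprod{\vel}{\grad\tau}=1$ in the graph-space sense. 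The key object is then the shifted weight $\psi\defeq T-\tau$, which is nonnegative, bounded by $T$, and satisfies $\inprod{\vel}{\grad\psi}=-1$.

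Next, for $\vv\in\Hgrphz{\dom}$ (first for smooth $\vv$, then by density) I would write $\Ltwonorm{\vv}{}^2=-\integprod{\vv^2}{\inprod{\vel}{\grad\psi}}{}$ and integrate by parts via~\eqref{Eq::IBP_multiD}, using $\divg\vel=0$ and $\inprod{\vel}{\grad(\vv^2)}=2\vv\,\inprod{\vel}{\grad\vv}$. This gives
\[
  \Ltwonorm{\vv}{}^2 = 2\integprod{\vv\,\inprod{\vel}{\grad\vv}}{\psi}{} - \integprod{\inprod{\vel}{\nn}\,\vv^2}{\psi}{\bdom}.
\]
The boundary term is where the choice of $\psi$ pays off: on $\bdomm$ one has $\vv=0$; on $\bdomz$ one has $\inprod{\vel}{\nn}=0$; and on $\bdomp$ one has $\inprod{\vel}{\nn}>0$ together with $\psi\ge0$ and $\vv^2\ge0$, so the whole boundary contribution has the favorable sign and may be dropped. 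Bounding $\psi\le T$ and applying the Cauchy--Schwarz inequality then yields $\Ltwonorm{\vv}{}^2\le 2T\,\Ltwonorm{\vv}{}\,\Ltwonorm{\inprod{\vel}{\grad\vv}}{}$, and dividing by $\Ltwonorm{\vv}{}$ produces~\eqref{Eq::poincare} with $\cpcrv\le 2T$.

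Finally, to obtain $T\le\mathrm{diam}(\dom)/\alpha$ under~\eqref{Eq::filling}, I would track the scalar quantity $t\mapsto\inprod{\bm k}{X(t)}$ along a trajectory $X$ solving $\dot X=\vel(X)$: by the chain rule its derivative equals $\inprod{\vel}{\bm k}\ge\alpha$, so this coordinate grows at least linearly with rate $\alpha$. Since its total increase while the trajectory remains in $\dom$ cannot exceed $\mathrm{diam}(\dom)$, the residence time along any trajectory is at most $\mathrm{diam}(\dom)/\alpha$, whence $T\le\mathrm{diam}(\dom)/\alpha$.

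The main obstacle is the rigorous construction and regularity of $\tau$: although $\vel\in\Cnt^1(\overline\dom;\R^\dimx)$ makes the flow map locally well defined and smooth, $\tau$ is in general only Lipschitz and may fail to be differentiable across streamlines grazing $\bdom$, so the identity $\inprod{\vel}{\grad\tau}=1$, the membership of $\psi$ in a space for which~\eqref{Eq::IBP_multiD} applies, and the integration by parts all require careful justification via approximation and the trace theory underlying~\eqref{Eq::IBP_multiD}; this is exactly the technical content established in~\cite{azerad1996inegalite}. Everything else reduces to the elementary weighted energy estimate above.
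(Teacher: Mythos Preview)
The paper does not actually prove this lemma: it is stated with a citation to~\cite{azerad1996inegalite} and no argument is given in the text. Your sketch is precisely the standard weighted-energy proof from that reference (build the residence-time weight $\psi=T-\tau$ with $\inprod{\vel}{\grad\psi}=-1$, integrate $\Ltwonorm{\vv}{}^2$ by parts, discard the sign-favorable boundary term on $\bdomp$, and apply Cauchy--Schwarz), and the computation you wrote is correct, including the bound $T\le\mathrm{diam}(\dom)/\alpha$ via the monotone coordinate $\inprod{\bm k}{X(t)}$. You also correctly flag the only genuine technicality, namely the regularity of $\tau$ and the justification of the integration by parts, and attribute it to the cited source; so there is nothing to add beyond what the paper itself does, which is to invoke~\cite{azerad1996inegalite}.
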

A similar result can also be obtained for a non divergence-free field, see~\cite{axelsson2014robust}. In the case where the field $\vel$ is constant, one can easily set $\bm k$ as the direction of the flow and $\alpha = \vert \vel \vert$.
A crucial consequence of Lemma~\ref{Lem::Friedrichs_stream} is that one can equip the spaces $\Hgrphz{\dom}$ and $\Hgrphadz{\dom}$ with the norm $\Vert \inprod{\vel}{\grad (\cdot)} \Vert$.

\begin{rem}[functions with mean value zero]
  \label{Rem::curved}
  While, following from Lemma~\ref{Lem::Friedrichs_stream}, the streamline Poincar\'e inequality holds true for functions with zero trace on the inflow of an arbitrary domain $\DD$, such a result is not valid for functions with mean value zero as a variant of the Poincar\'e inequality~\eqref{Eq::Poincare} in multiple spatial dimensions. This leads to significant differences in the analysis of the multi-dimensional case compared to one-dimensional one, and less rigorous results that we are able to present here.
\end{rem}

%

\subsection{Error-residual equivalence}
We consider the multi-dimensional extension of the ultra-weak formulation~\eqref{Eq::weak_two}:  find $\uu \in \Ltwo(\dom)$ such that
\begin{equation}
  \label{Eq::weak_MD}
  - \integprod{\uu}{\inprod{\vel}{\grad \vv}}{}  =  \integprod{\ff}{ \vv}{}, \qquad \forall \vv \in \Hgrphadz{\dom}.
\end{equation}
Define the residual operator $\Res(\uh) \in \Hgrphadz{\dom}'$ and its dual norm as in~\eqref{Eq::Res_two}--\eqref{Eq::Res_two_norm}, upon replacing $\Hbplus{\dom}$ by $\Hgrphadz{\dom}$.
One can extend the equivalence of Theorem~\ref{Thm::Err_Res} to the multi-dimensional case as follows:
\begin{thm}[error-residual equivalence]
  \label{Thm::Err_Res_MD} Let the field $\vel \in \Cnt^1(\overline \dom; \R^\dimx)$ be divergence-free and $\dom$-filling. Let $\uu \in \Ltwo(\dom)$ be the ultra-weak solution of~\eqref{Eq::weak_MD}.
  Then
  \[
    \Ltwonorm{\uu - \uh}{} = \Vert \Res(\uh) \Vert_{\vel; \, \Hgrphadz{\dom}'} \qquad \forall \uh \in \Ltwo(\dom).
  \]
\end{thm}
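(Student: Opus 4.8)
The plan is to mirror the one-dimensional argument of Theorem~\ref{Thm::Err_Res}, replacing the elementary transport solve (which in $d=1$ amounts to direct integration along the line) by the well-posedness of the multi-dimensional transport problem guaranteed by the $\dom$-filling hypothesis. The conceptual heart is the observation that the adjoint operator $\linop^*\colon \zz \mapsto -\inprod{\vel}{\grad \zz}$, restricted to $\Hgrphadz{\dom}$, is an isometric bijection onto $\Ltwo(\dom)$ once $\Hgrphadz{\dom}$ carries the seminorm $\Ltwonorm{\inprod{\vel}{\grad(\cdot)}}{}$, which Lemma~\ref{Lem::Friedrichs_stream} promotes to a genuine norm.

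First I would invoke the well-posedness of the transport equation for the reversed velocity $-\vel$: for every $\vv \in \Ltwo(\dom)$ there exists a unique $\zz \in \Hgrphadz{\dom}$ with
\[
  -\integprod{\inprod{\vel}{\grad \zz}}{\ww}{} = \integprod{\vv}{\ww}{} \qquad \forall \ww \in \Ltwo(\dom).
\]
Reversing $\vel$ swaps the roles of $\bdomm$ and $\bdomp$, so the natural trial space carries the homogeneous condition on $\bdomp$, which is precisely $\Hgrphadz{\dom}$; the $\dom$-filling property ensures this problem is well-posed and the streamline Poincar\'e inequality yields coercivity in the norm $\Ltwonorm{\inprod{\vel}{\grad(\cdot)}}{}$. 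Testing against arbitrary $\ww$ identifies $-\inprod{\vel}{\grad\zz} = \vv$ pointwise, whence $\Ltwonorm{\inprod{\vel}{\grad\zz}}{} = \Ltwonorm{\vv}{}$, and the correspondence $\vv \leftrightarrow \zz$ is a bijection between $\Ltwo(\dom)\setminus\{0\}$ and $\Hgrphadz{\dom}\setminus\{0\}$.

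With this in hand, the remainder is a change of supremum variable. For any $\ww \in \Ltwo(\dom)$ I would write $\Ltwonorm{\ww}{}$ via $\Ltwo$-duality and substitute $\vv = -\inprod{\vel}{\grad\zz}$:
\[
  \Ltwonorm{\ww}{} = \sup_{\vv \in \Ltwo(\dom)\setminus\{0\}} \frac{\integprod{\ww}{\vv}{}}{\Ltwonorm{\vv}{}} = \sup_{\zz \in \Hgrphadz{\dom}\setminus\{0\}} \frac{-\integprod{\ww}{\inprod{\vel}{\grad\zz}}{}}{\Ltwonorm{\inprod{\vel}{\grad\zz}}{}}.
\]
Choosing $\ww = \uu - \uh$ and expanding $-\integprod{\uu-\uh}{\inprod{\vel}{\grad\zz}}{}$ by means of the ultra-weak formulation~\eqref{Eq::weak_MD} for $\uu$ together with the multi-dimensional analog of the residual definition~\eqref{Eq::Res_two} reproduces $\pair{\Res(\uh)}{\zz}$, so the right-hand side is exactly $\Vert \Res(\uh) \Vert_{\vel; \, \Hgrphadz{\dom}'}$, which is the claimed identity.

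The main obstacle is the well-posedness claim in the second paragraph: in $d=1$ it was a one-line integration, whereas in several dimensions it rests essentially on the $\dom$-filling hypothesis and on Lemma~\ref{Lem::Friedrichs_stream}. Concretely, one must ensure both that $\linop^*$ is \emph{injective} on $\Hgrphadz{\dom}$ (immediate from the streamline Poincar\'e inequality, which forces $\Ltwonorm{\inprod{\vel}{\grad(\cdot)}}{}$ to be a norm) and that it is \emph{surjective} onto all of $\Ltwo(\dom)$. The surjectivity is the delicate point; it follows from the standard Friedrichs-systems / graph-space theory for the advection operator, applied with the reversed field $-\vel$ and the boundary condition transferred accordingly, for which I would cite the inf--sup well-posedness framework already alluded to for~\eqref{Eq::weak_MD}, namely~\cite[Theorem~2.6]{ern2006discontinuous} and~\cite[Theorem~2.4]{dahmen2012adaptive}.
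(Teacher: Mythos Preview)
Your proposal is correct and follows essentially the same route as the paper: one invokes the well-posedness of the transport problem for the reversed velocity to produce, for each $\vv\in\Ltwo(\dom)$, a unique $\zz\in\Hgrphadz{\dom}$ with $-\inprod{\vel}{\grad\zz}=\vv$, and then repeats verbatim the supremum argument of Theorem~\ref{Thm::Err_Res}. The paper simply states this well-posedness fact and refers back to the one-dimensional proof; your additional discussion of injectivity/surjectivity and the citations to~\cite{ern2006discontinuous,dahmen2012adaptive} are exactly the justifications needed.
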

\begin{proof}
  %
  We use the fact that for all $\vv \in \Ltwo(\dom)$, there exists a unique $\zz \in \Hgrphadz{\dom}$ such that
  %
  \begin{equation*}
    -\integprod{\inprod{\vel}{\grad \zz}}{\ww}{}=\integprod{\vv}{\ww}{}, \qquad \forall \ww \in \Ltwo(\dom).
  \end{equation*}
  The rest of the proof goes along the lines of that of Theorem~\ref{Thm::Err_Res}.
\end{proof}

\subsection{Local problems and the error indicator}
\label{Sec::local_MD}
In this section, we propose a heuristic approach inspired by the rigorous discussions in the one-dimensional case.
First, let us consider the following reconstruction, mimicking Definition~\ref{Defi::local_second}. Here $\triag$ is a simplicial mesh of $\dom$, $\trinod$ the patch of all simplices which share the given vertex $\nod \in \nodh$, $\patch$ the corresponding open subdomain, and $\psia$ the associated hat basis function.
\begin{defi}[patchwise problems]
  \label{Defi::local_MD}
  Let $\uh \in \Ltwo(\dom)$. For all vertices $\nod \in \nodh$, let $\sha \in \Xha$ be the solution of the following least-squares problem on the patch subdomain $\patch$:
  %
  \begin{align}
    \label{Eq::Recons_MD}
    \sha \defeq \arg \min_{\vh \in \Xha} \Big \{ \Ltwonorm{ \psia(\uh - \vh)}{\patch}^2 + C_{\mathrm{opt}}^{{2}} \Ltwonorm{\ff\psia + \lf(\inprod{\vel}{\grad \psia}\rt) \uh -  \inprod{ \vel}{ \grad (\psia \vh)}}{\patch}^2 \Big \}.
  \end{align}
  %
  %
  For $\poldegp \geq 0$, we take the finite-dimensional space $\Xha \defeq  \polp{\poldegp}(\trinod) \cap \Hgrphz{\patch}$ when the vertex $\nod$ lies in the closure of the inflow boundary $\bdomm$, and $\Xha \defeq  \polp{\poldegp}(\trinod) \cap \Hgrph{\patch}$ otherwise. Here $C_{\mathrm{opt}} > 0$ is a constant to be chosen. The global reconstruction $\sh$ is defined by
  \begin{equation}
    \label{Eq::recons_glob_MD}
    \sh \defeq \sum_{\nod \in \nodh} \psia \sha,
  \end{equation}
  leading to $\sh \in \polp{\poldegp + 1}(\triag) \cap \Hgrphz{\dom}$.
\end{defi}

In order to see the rationale behind the above reconstruction, one might note the following upper bound on the error exploiting Theorem~\ref{Thm::Err_Res_MD}, the integration-by-parts formula~\eqref{Eq::IBP_multiD}, the Cauchy--Schwarz inequality, and the streamline Poincar\'e inequality~\eqref{Eq::poincare}: for any $\sh \in \Hgrphz{\dom}$, we have
\begin{align*}
  \Ltwonorm{\uu - \uh}{} & =  \Vert \Res(\uh) \Vert_{\vel; \, \Hgrphadz{\dom}'} = \sup_{\vv \in  \Hgrphadz{\dom} \setminus \{0\}}  \dfrac{(\ff - \inprod{ \vel}{ \grad \sh}, \vv)  + (\uh - \sh, \inprod{\vel}{\grad \vv})}{\Ltwonorm{\inprod{\vel}{\grad \vv}}{}} \\
  & \leq \Ltwonorm{\uh - \sh}{}  + \cpcrv  \Ltwonorm{\ff -  \inprod{ \vel}{ \grad \sh}}{}.
\end{align*}
Using that almost each point in $\dom$ belongs to $(d+1)$ patch subdomains $\patch$ and the partition of unity~\eqref{Eq::unity_partition}, the construction of $\sh$ via~\eqref{Eq::recons_glob_MD} gives the following upper bound:
\[
  \Ltwonorm{\uu - \uh}{} \leq \lf \{  2 (\dimx+1) \sum_{\nod \in \nodh} \Big[ \Ltwonorm{ \psia(\uh - \sha)}{\patch}^2 +  \cpcrv^2  \Ltwonorm{\ff\psia {+ \lf(\inprod{\vel}{\grad \psia}\rt) \uh} -  \inprod{ \vel}{ \grad (\psia \sha)}}{\patch}^2 \Big] \rt \}^{1/2}.
\]
In particular, the idea of adding $0=\sum_{\nod \in \nodh}\inprod{\vel}{\grad \psia} \uh$ is inspired by the analysis in the one-dimensional case. By comparison to Definition~\ref{Defi::local_MD} one can see that the least-squares problems~\eqref{Eq::Recons_MD} minimize contributions to the upper bound on the error, and a theoretically-motivated choice for $C_{\mathrm{opt}}$ would be $C_{\mathrm{opt}} = \cpcrv$. This in particular leads to the guaranteed estimate $\Ltwonorm{\uu - \uh}{} \leq \Err$ with
\begin{equation}
  \label{Eq::err_indic_MD}
  \Err \defeq \lf\{\sum_{\el \in \triag}  \Vert \uh - \sh \Vert_\el^2 \rt \}^{1/2} +  \cpcrv \lf\{\sum_{\el \in \triag}  \Vert \ff - \inprod{\vel}{\grad \sh} \Vert^2_\el \rt \}^{1/2}.
\end{equation}
Numerical experiments, however, show that the second term in~\eqref{Eq::err_indic_MD} does not converge with the right order.
This apparently comes from the special structure of the minimization term which cannot be approximated up to the projection error, in contrast to the one-dimensional case, where~\eqref{Eq::orthog} holds true. Congruently, the lack of the Poincar\'e inequality in the streamline form (see Remark~\ref{Rem::curved}) implies the loss of the scaling by the mesh element diameters $\hk$ in the second term in~\eqref{Eq::err_indic_MD}, compare with $\ErrOsc$ given by~\eqref{Eq::Err_Res} in one space dimension.

In order to rectify this issue, we heuristically replace $\cpcrv$ (which typically scales as $2 \mathrm{diam}(\dom) / \alpha$, see Lemma~\ref{Lem::Friedrichs_stream}) in the estimate $\Err$ of~\eqref{Eq::err_indic_MD} by local terms $\frac{C' \hk}{\alpha}$,  where one now needs to choose the constant $C'$.
Overall, we suggest the following modified error indicator
\begin{equation}
  \label{Eq::err_indic_MD_mod}
  \Err_\mathrm{mod} \defeq \lf\{\sum_{\el \in \triag} \lf( \ErrNCel{\el}^2 +  \ErrResk{\el}^2 \rt) \rt \}^{1/2}, \quad \ErrNCel{\el} =  \Vert \uh - \sh \Vert_\el, \quad \ErrResk{\el} = \dfrac{C' \hk}{ \alpha }  \Vert \ff - \inprod{\vel}{\grad \sh} \Vert_\el.
\end{equation}
In this setting, one has two free parameters to choose, $C_\mathrm{opt}$ for the local problems in~\eqref{Eq::Recons_MD} and $C'$ in~\eqref{Eq::err_indic_MD_mod}. We set below $C_\mathrm{opt} = 2 \mathrm{diam}(\dom) / \alpha$, as Lemmas~\ref{Lem::om_fil}--\ref{Lem::Friedrichs_stream} suggest, and $C' = 2$. Numerically, our results are actually not sensitive to the choice of the parameter $C_\mathrm{opt}$.

Let us stress that, unlike $\Err$ in~\eqref{Eq::err_indic_MD}, the modified indicator $\Err_\mathrm{mod}$ from~\eqref{Eq::err_indic_MD_mod} is not a guaranteed upper bound of the error; however, our numerical results presented below show its applicability.

\subsection{Numerical experiments}
\label{Sec::num_MD}
In this section, we provide some numerical tests in two space dimensions to illustrate the properties of the error indicator~\eqref{Eq::err_indic_MD_mod}, with $\sh$ constructed following Definition~\ref{Defi::local_MD}. The implementation is done in the framework of FreeFEM++~\cite{hecht2012new} and based on the scripts for the reconstruction-based a posteriori estimation by~\cite{tang}.

One might note that the reconstruction $\sh$ of Definition~\ref{Defi::local_MD}, lying in the space $\Hgrphz{\dom}$, possibly allows capturing the discontinuity that may appear in the exact solution $\uu$ across the streamlines. This is, however, only in reach if the triangulation is aligned with the streamlines. If this is not the case, the reconstruction $\sh$ actually lies in the smoother space $\Hs{1}(\dom)$. Below, we will consider two test cases, one with both the exact solution $\uu$ and the reconstruction $\sh$ lying in $\Hs{1}(\dom)$, and one with a discontinuous solution $\uu \in \Hgrphz{\dom}$, with the triangulation aligned with the streamlines, ideally including also the lines of discontinuity of the exact solution.

Let us consider $\dom = (0, 1)^2$ and uniformly refined structured triangulations aligned with the slope $45 \degree$.  We only test here the {\DG} method~\eqref{Eq::DG}, since it is the only method among those considered in Section~\ref{Sec::Example} which is stable and well-defined in multiple space dimensions.

\subsubsection{Smooth solution}
We apply the right-hand side $\ff$ such that the solution of~\eqref{Eq::adv_multiD} is
\begin{equation} \label{eq_sol_sm}
  \uu(x, y) = \sin(\pi x) \sin(\pi y),
\end{equation}
for different velocity fields $\vel$.
The results are presented in Tables~\ref{Tab::conv_DG_MD_p} and~\ref{Tab::conv_DG_MD_vel} for various polynomial degrees $\poldeg$, with the choice $\poldegp = \poldeg + 1$ in Definition~\ref{Defi::local_MD}. Here $\ErrNC \defeq \lf( \sumk \ErrNCel{\el}^2 \rt)^{1/2}$ and $\ErrReskall \defeq \lf( \sumk \ErrResk{\el}^2 \rt)^{1/2}.$ The error indicator performs well in actually providing the upper bound of the error and simultaneously not overestimating it excessively. Moreover, the efficiency results appear to be robust with respect to both the velocity field $\vel$ and the polynomial degree $\poldeg$.
Compared to Section~\ref{Sec::Exp_vel}, both $\uu$ and $\uh$, but actually also $\sh$ constructed following Definition~\ref{Defi::local_MD}, turn out to be insensitive to the scaling of $\vel$ by a constant, so that the estimators in~\eqref{Eq::err_indic_MD_mod} do not change either.
In Figure~\ref{Fig::Error_DG_MD}, the distribution of the errors $\Ltwonorm{\uu - \uh}{\el}$ and of the error estimators $\Err_{\mathrm{mod}, \el} \defeq  \lf( \ErrNCel{\el}^2 +  \ErrResk{\el}^2 \rt)^{1/2}$ is presented. These distributions show a very close behavior, which suggests that the presented indicators should be suitable for adaptive mesh/polynomial degree refinement.

\begin{table}[H]
  \centering
  \caption{Convergence of the error $\Vert \uu - \uh \Vert$, the error estimators $\Err_\mathrm{mod}$, $\ErrNC$, and $\ErrReskall$, and the effectivity indices $I_{\mathrm{eff}}$ for   the {\DG} method~\eqref{Eq::DG}; smooth solution~\eqref{eq_sol_sm}, $\vel = (1, 1)^{\mathrm{t}}$, and different polynomial degrees $\poldeg$}
  \begin{tabular}{| c | c | c   | c |   c  |c |  c |  }
    \hline

    \multicolumn{7}{|c|}{\bm{$\poldeg = 1, \poldegp= 2$} } \\ \hline
    \# Elements & $\#$ DOF$(\uh)$ & $\ErrNC$  & $\ErrRes$ & $\Vert \uu - \uh \Vert$ & $\Err_\mathrm{mod}$ & $I_\mathrm{eff}$ \\ \hline

    8           & 24              & 9.365e-02 & 2.083e-01 & 1.097e-01               & 2.175e-01           & 1.98             \\
    32          & 96              & 2.584e-02 & 4.156e-02 & 2.963e-02               & 4.871e-02           & 1.64             \\
    128         & 384             & 6.786e-03 & 8.666e-03 & 7.553e-03               & 1.100e-02           & 1.46             \\
    512         & 1536            & 1.727e-03 & 1.983e-03 & 1.897e-03               & 2.630e-03           & 1.39             \\
    2048        & 6144            & 4.347e-04 & 4.773e-04 & 4.749e-04               & 6.456e-04           & 1.35             \\
    8192        & 24576           & 1.088e-04 & 1.173e-04 & 1.187e-04               & 1.601e-04           & 1.34             \\

    \hline \hline
    \multicolumn{7}{|c|}{\bm{$\poldeg = 2, \poldegp= 3$} } \\ \hline
    \# Elements & $\#$ DOF$(\uh)$ & $\ErrNC$  & $\ErrRes$ & $\Vert \uu - \uh \Vert$ & $\Err_\mathrm{mod}$ & $I_\mathrm{eff}$ \\ \hline

    8           & 48              & 2.271e-02 & 4.807e-02 & 1.882e-02               & 3.360e-02           & 1.78             \\
    32          & 192             & 3.106e-03 & 3.785e-03 & 2.476e-03               & 3.495e-03           & 1.41             \\
    128         & 768             & 3.972e-04 & 4.147e-04 & 3.135e-04               & 4.254e-04           & 1.36             \\
    512         & 3072            & 4.995e-05 & 5.012e-05 & 3.929e-05               & 5.280e-05           & 1.34             \\
    2048        & 12288           & 6.253e-06 & 6.216e-06 & 4.934e-06               & 6.592e-06           & 1.33             \\
    8192        & 49152           & 7.822e-07 & 7.843e-07 & 6.270e-07               & 8.322e-07           & 1.32             \\

    %

    \hline
  \end{tabular}
  \label{Tab::conv_DG_MD_p}
\end{table}

\begin{table}[H]
  \centering
  \caption{Convergence of the error $\Vert \uu - \uh \Vert$, the error estimators $\Err_\mathrm{mod}$, $\ErrNC$, and $\ErrReskall$, and the effectivity indices $I_{\mathrm{eff}}$ for the {\DG} method~\eqref{Eq::DG}; smooth solution~\eqref{eq_sol_sm}, different velocity fields, and $\poldeg =1$}
  \begin{tabular}{| c | c | c   | c |   c  |c |  c |  }
    \hline
    \multicolumn{7}{|c|}{\bm{$\poldeg = 1, \poldegp= 2, \vel=(100, 100)^{\mathrm{t}}$} } \\ \hline
    \# Elements & $\#$ DOF$(\uh)$ & $\ErrNC$  & $\ErrRes$ & $\Vert \uu - \uh \Vert$ & $\Err_\mathrm{mod}$ & $I_\mathrm{eff}$ \\ \hline

    8           & 24              & 9.365e-02 & 2.083e-01 & 1.097e-01               & 2.175e-01           & 1.98             \\
    32          & 96              & 2.584e-02 & 4.156e-02 & 2.963e-02               & 4.871e-02           & 1.64             \\
    128         & 384             & 6.786e-03 & 8.666e-03 & 7.553e-03               & 1.100e-02           & 1.46             \\
    512         & 1536            & 1.727e-03 & 1.983e-03 & 1.897e-03               & 2.630e-03           & 1.39             \\
    2048        & 6144            & 4.347e-04 & 4.773e-04 & 4.749e-04               & 6.456e-04           & 1.35             \\
    8192        & 24576           & 1.088e-04 & 1.173e-04 & 1.187e-04               & 1.601e-04           & 1.34             \\

    \hline \hline
    \multicolumn{7}{|c|}{\bm{$\poldeg = 1, \poldegp= 2, \vel=(10, 1)^{\mathrm{t}}$} } \\ \hline
    \# Elements & $\#$ DOF$(\uh)$ & $\ErrNC$  & $\ErrRes$ & $\Vert \uu - \uh \Vert$ & $\Err_\mathrm{mod}$ & $I_\mathrm{eff}$ \\ \hline

    8           & 24              & 8.299e-02 & 2.216e-01 & 1.009e-01               & 2.307e-01           & 2.28             \\
    32          & 96              & 2.057e-02 & 4.714e-02 & 2.896e-02               & 5.137e-02           & 1.77             \\
    128         & 384             & 5.325e-03 & 1.062e-02 & 7.965e-03               & 1.188e-02           & 1.49             \\
    512         & 1536            & 1.370e-03 & 2.684e-03 & 2.069e-03               & 3.014e-03           & 1.46             \\
    2048        & 6144            & 3.459e-04 & 6.807e-04 & 5.241e-04               & 7.636e-04           & 1.45             \\
    8192        & 24576           & 8.667e-05 & 1.711e-04 & 1.316e-04               & 1.918e-04           & 1.45             \\

    \hline \hline
    \multicolumn{7}{|c|}{\bm{$\poldeg = 1, \poldegp= 2, \vel=(y, x+1)^{\mathrm{t}}$} $(\alpha = 1)$ } \\ \hline
    \# Elements & $\#$ DOF$(\uh)$ & $\ErrNC$  & $\ErrRes$ & $\Vert \uu - \uh \Vert$ & $\Err_\mathrm{mod}$ & $I_\mathrm{eff}$ \\ \hline

    8           & 24              & 9.582e-02 & 2.239e-01 & 1.134e-01               & 2.360e-01           & 2.08             \\
    32          & 96              & 2.513e-02 & 5.212e-02 & 3.152e-02               & 5.780e-02           & 1.83             \\
    128         & 384             & 6.478e-03 & 1.233e-02 & 8.007e-03               & 1.393e-02           & 1.74             \\
    512         & 1536            & 1.636e-03 & 2.991e-03 & 2.013e-03               & 3.409e-03           & 1.69             \\
    2048        & 6144            & 4.103e-04 & 7.379e-04 & 5.053e-04               & 8.443e-04           & 1.67             \\
    8192        & 24576           & 1.027e-04 & 1.833e-04 & 1.267e-04               & 2.101e-04           & 1.66             \\

    \hline
  \end{tabular}
  \label{Tab::conv_DG_MD_vel}
\end{table}

\begin{figure}[H]
  \centering

  \begin{subfigure}[b]{\textwidth}
    \includegraphics[width=.49\textwidth]{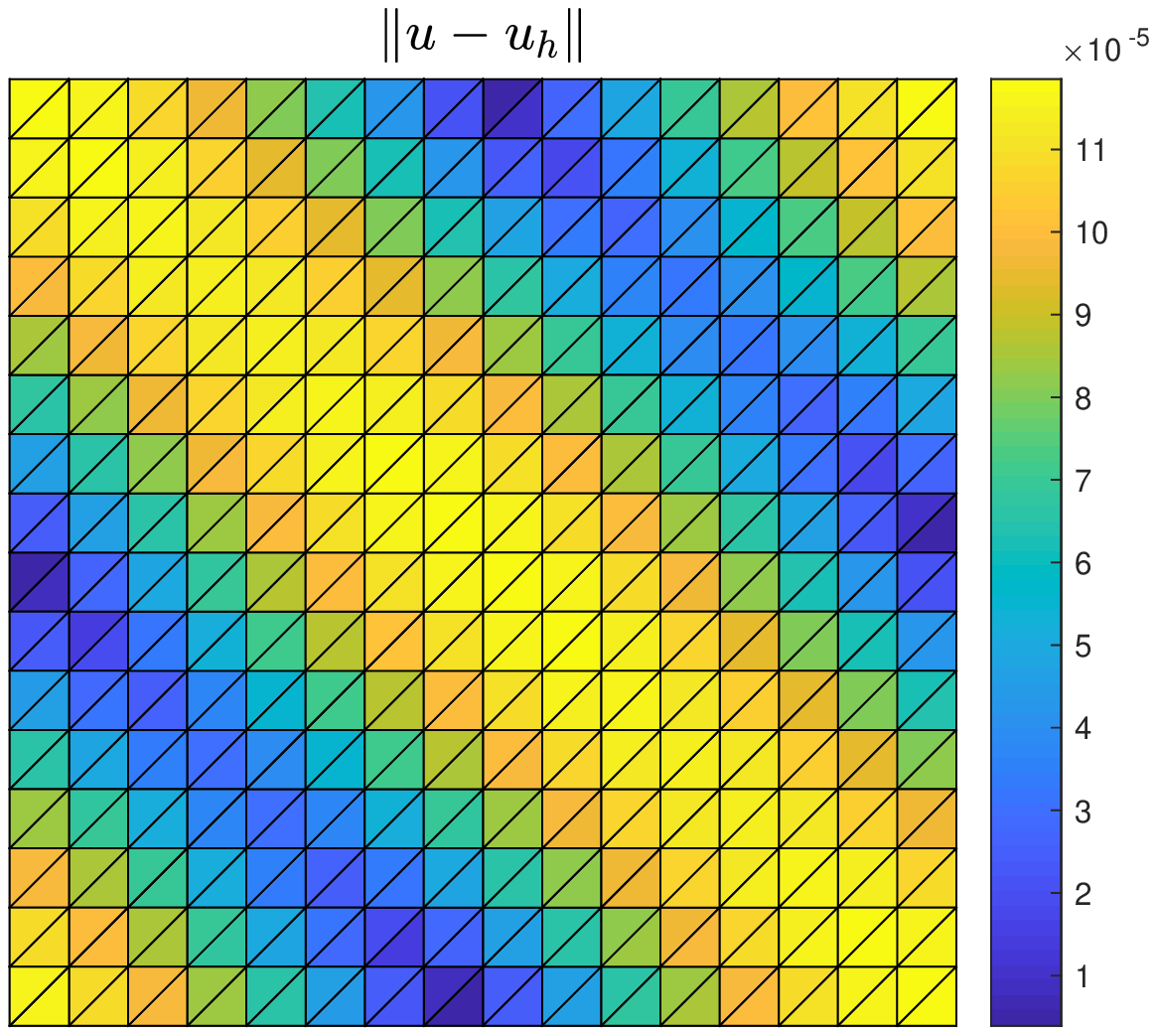}
    \includegraphics[width=.49\textwidth]{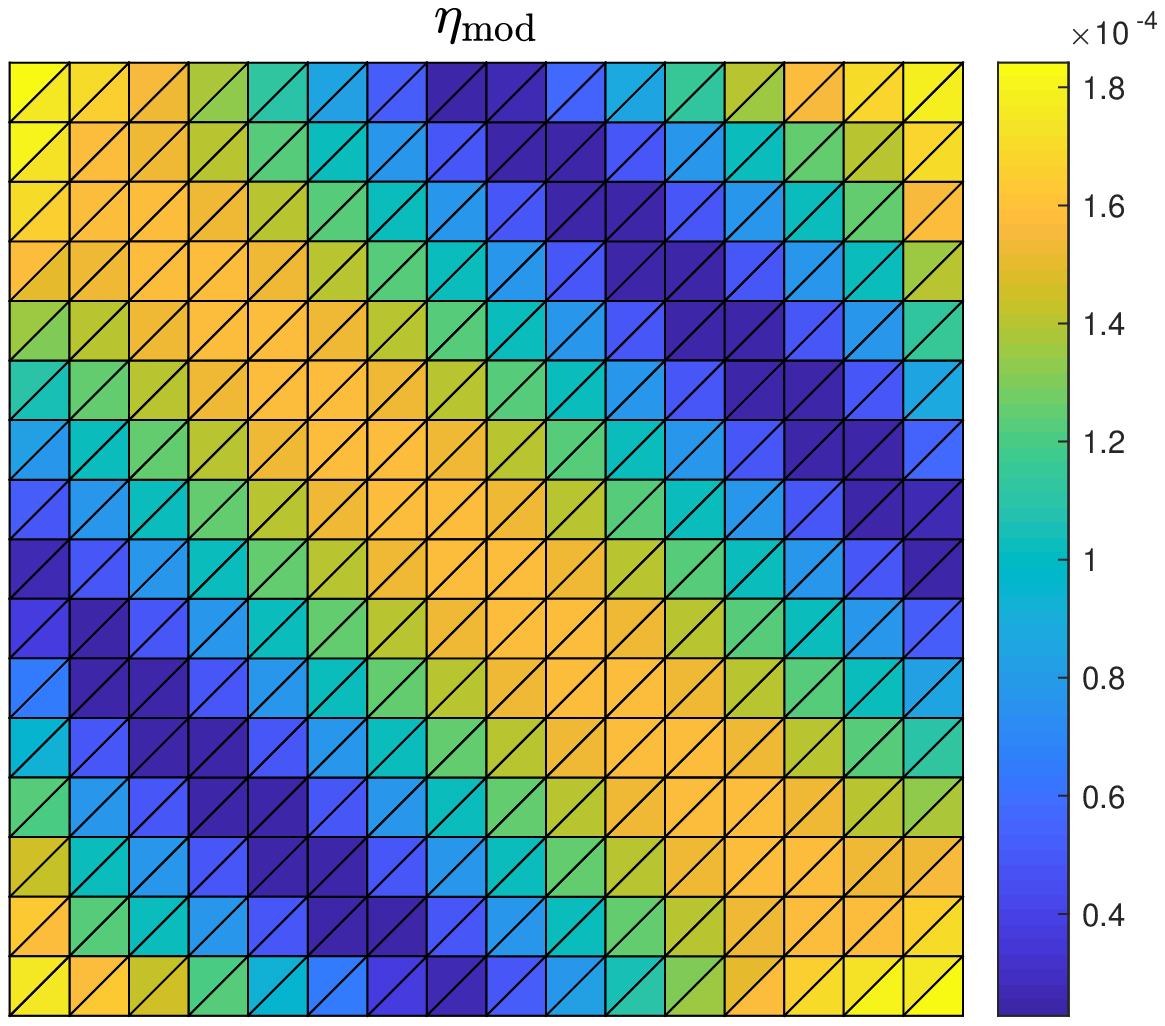}
    \caption{$\poldeg = 1,  \poldegp= 2$}
    \label{fig:trerr1}
  \end{subfigure}
  \\
  \begin{subfigure}[b]{\textwidth}
    \includegraphics[width=.49\textwidth]{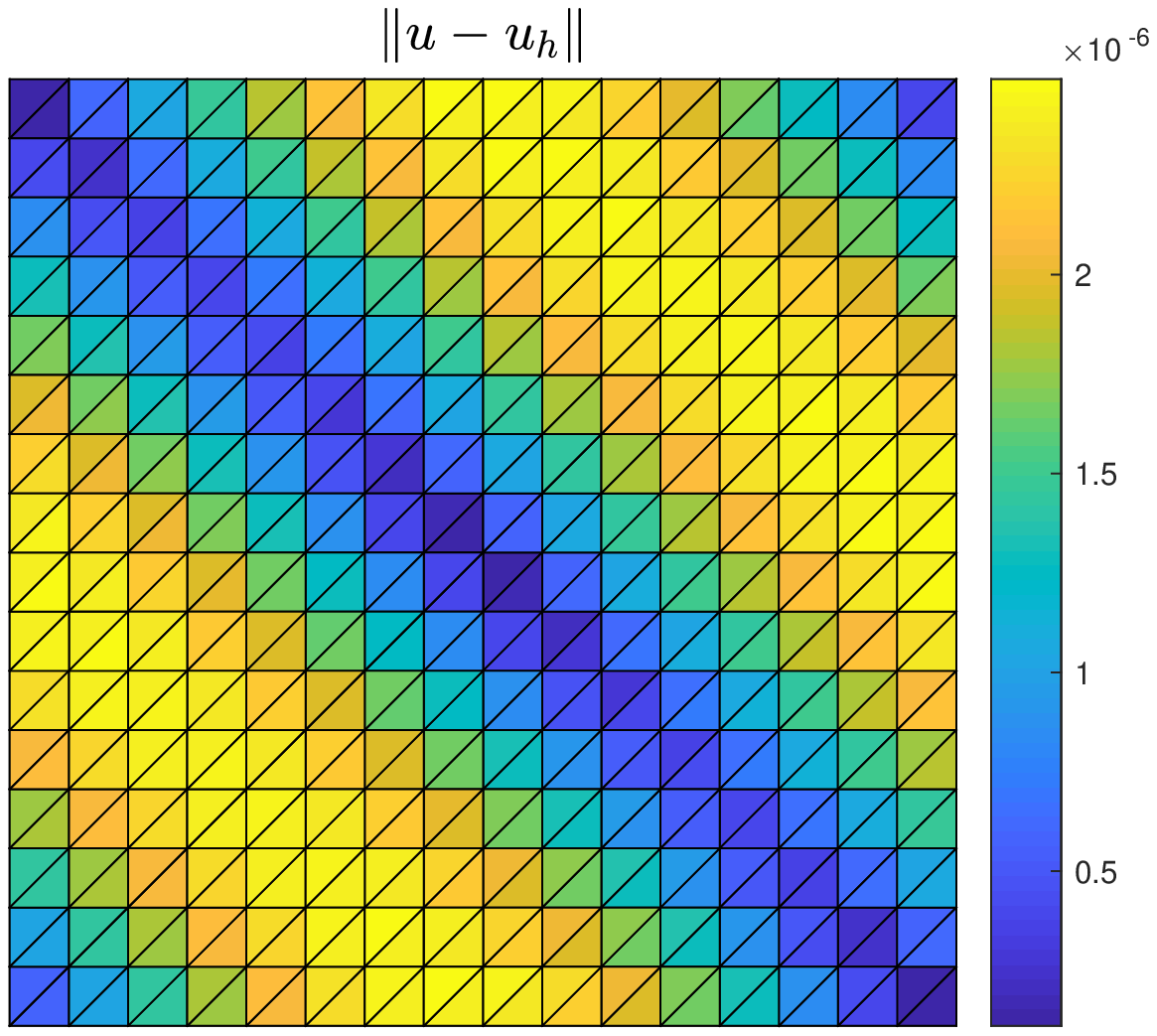}
    \includegraphics[width=.49\textwidth]{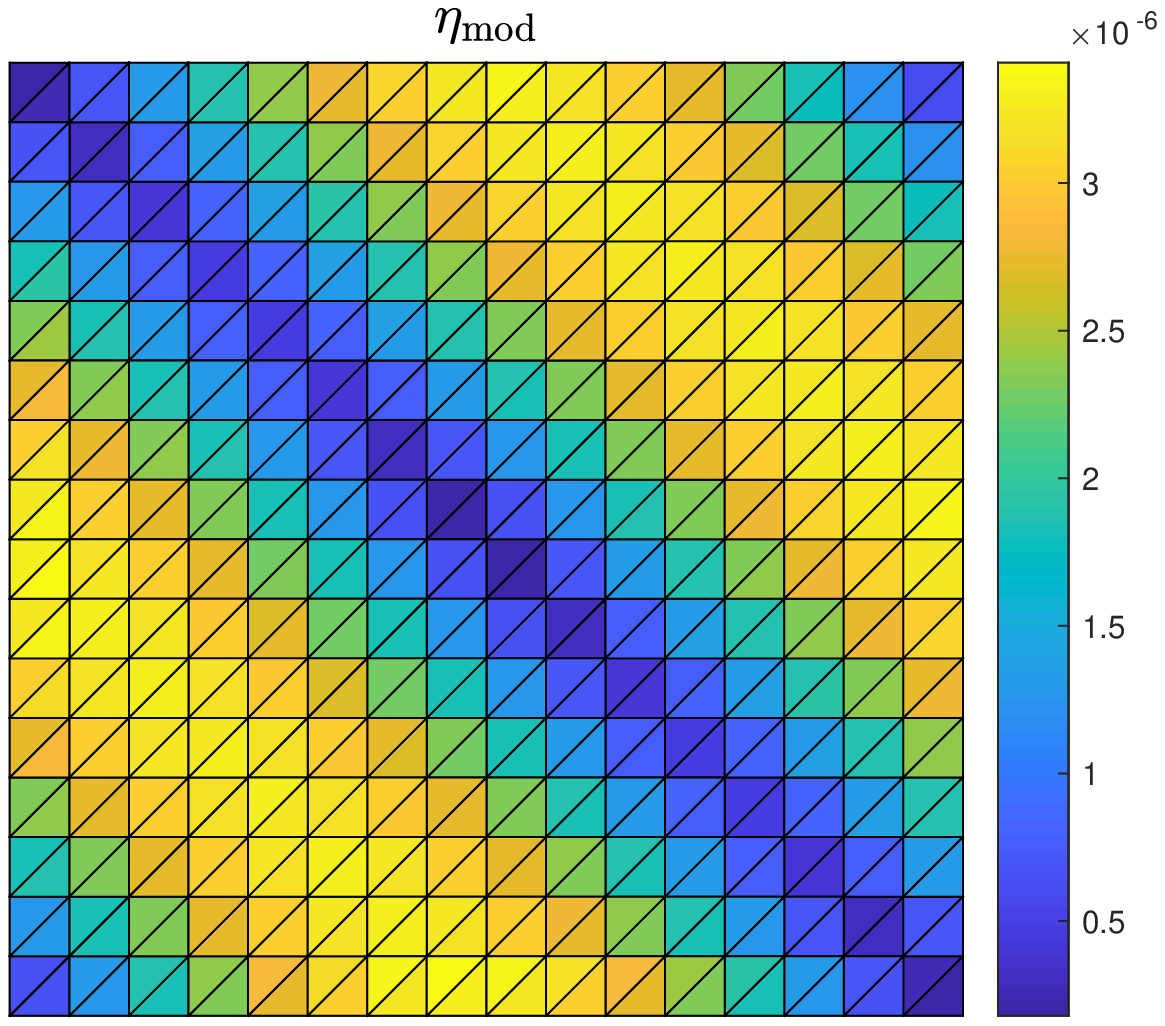}
    \caption{$\poldeg =2, \poldegp= 3$}
    \label{fig:trerr2}
  \end{subfigure}
  \caption{{Distribution of the errors $\Ltwonorm{\uu - \uh}{\el}$ (left) and of the local error estimators $\Err_{\mathrm{mod},\el}$ (right) for the {\DG} method~\eqref{Eq::DG} with $512$ elements; smooth solution~\eqref{eq_sol_sm}, $\vel=(1, 1)^{\mathrm{t}}$, and different polynomial degrees $\poldeg$}}

  \label{Fig::Error_DG_MD}
\end{figure}

\subsubsection{Discontinuous solution}

In this last example, we consider a discontinuous exact solution. For the velocity field $\vel = (1, 1)^{\mathrm{t}}$, we set
\begin{equation} \label{eq_sol_disc}
  \uu(x, y) = \begin{cases}
    0,                       & x < y, \\
    \sin(\pi x) \sin(\pi y), & x > y,
  \end{cases}
\end{equation}
and prescribe accordingly the right-hand side $\ff$. The triangulation is set to be aligned with this discontinuity; hence, the reconstruction $\sh$ is continuous everywhere but not at the discontinuity line of the exact solution. The results are presented in Table~\ref{Tab::conv_DG_MD_disc} for different polynomial degrees $\poldeg$. They show the robustness with respect to the polynomial degree of approximation. In Figure~\ref{Fig::Error_DG_MD_disc}, the distributions of the error and of the error estimators $\Err_{\mathrm{mod}, \el}$ are presented, again showing a very similar behavior.

\begin{table}[H]
  \centering
  \caption{Convergence of the error $\Vert \uu - \uh \Vert$, the error estimators $\Err_\mathrm{mod}$, $\ErrNC$, and $\ErrReskall$, and the effectivity indices $I_{\mathrm{eff}}$ for the {\DG} method~\eqref{Eq::DG}; discontinuous solution~\eqref{eq_sol_disc}, $\vel = (1, 1)^{\mathrm{t}}$, and different polynomial degrees $\poldeg$}
  \begin{tabular}{| c | c | c   | c |   c  |c |  c |  }
    \hline

    \multicolumn{7}{|c|}{\bm{$\poldeg = 1, \poldegp= 2$} } \\ \hline
    \# Elements & $\#$ DOF$(\uh)$ & $\ErrNC$  & $\ErrRes$ & $\Vert \uu - \uh \Vert$ & $\Err_\mathrm{mod}$ & $I_\mathrm{eff}$ \\ \hline

    8           & 24              & 6.622e-02 & 1.473e-01 & 7.758e-02               & 1.538e-01           & 1.98             \\
    32          & 96              & 1.827e-02 & 2.939e-02 & 2.095e-02               & 3.445e-02           & 1.64             \\
    128         & 384             & 4.798e-03 & 6.127e-03 & 5.341e-03               & 7.780e-03           & 1.45             \\
    512         & 1536            & 1.221e-03 & 1.402e-03 & 1.341e-03               & 1.860e-03           & 1.38             \\
    2048        & 6144            & 3.074e-04 & 3.375e-04 & 3.358e-04               & 4.565e-04           & 1.36             \\
    8192        & 24576           & 7.705e-05 & 8.295e-05 & 8.399e-05               & 1.132e-04           & 1.35             \\

    \hline \hline
    \multicolumn{7}{|c|}{\bm{$\poldeg = 2, \poldegp= 3$} } \\ \hline
    \# Elements & $\#$ DOF$(\uh)$ & $\ErrNC$  & $\ErrRes$ & $\Vert \uu - \uh \Vert$ & $\Err_\mathrm{mod}$ & $I_\mathrm{eff}$ \\ \hline

    8           & 48              & 1.626e-02 & 7.463e-02 & 1.330e-02               & 3.680e-02           & 2.76             \\
    32          & 192             & 2.227e-03 & 5.102e-03 & 1.751e-03               & 3.105e-03           & 1.77             \\
    128         & 768             & 2.850e-04 & 3.544e-04 & 2.217e-04               & 3.454e-04           & 1.56             \\
    512         & 3072            & 3.583e-05 & 2.847e-05 & 2.778e-05               & 4.181e-05           & 1.50             \\
    2048        & 12288           & 4.485e-06 & 2.849e-06 & 3.489e-06               & 5.184e-06           & 1.48             \\
    8192        & 49152           & 5.610e-07 & 3.410e-07 & 4.433e-07               & 6.525e-07           & 1.47             \\

    %
    %
    %
    %

    \hline
  \end{tabular}
  \label{Tab::conv_DG_MD_disc}
\end{table}

\begin{figure}[]
  \centering
  \begin{subfigure}[b]{\textwidth}
    \includegraphics[width=.49\textwidth]{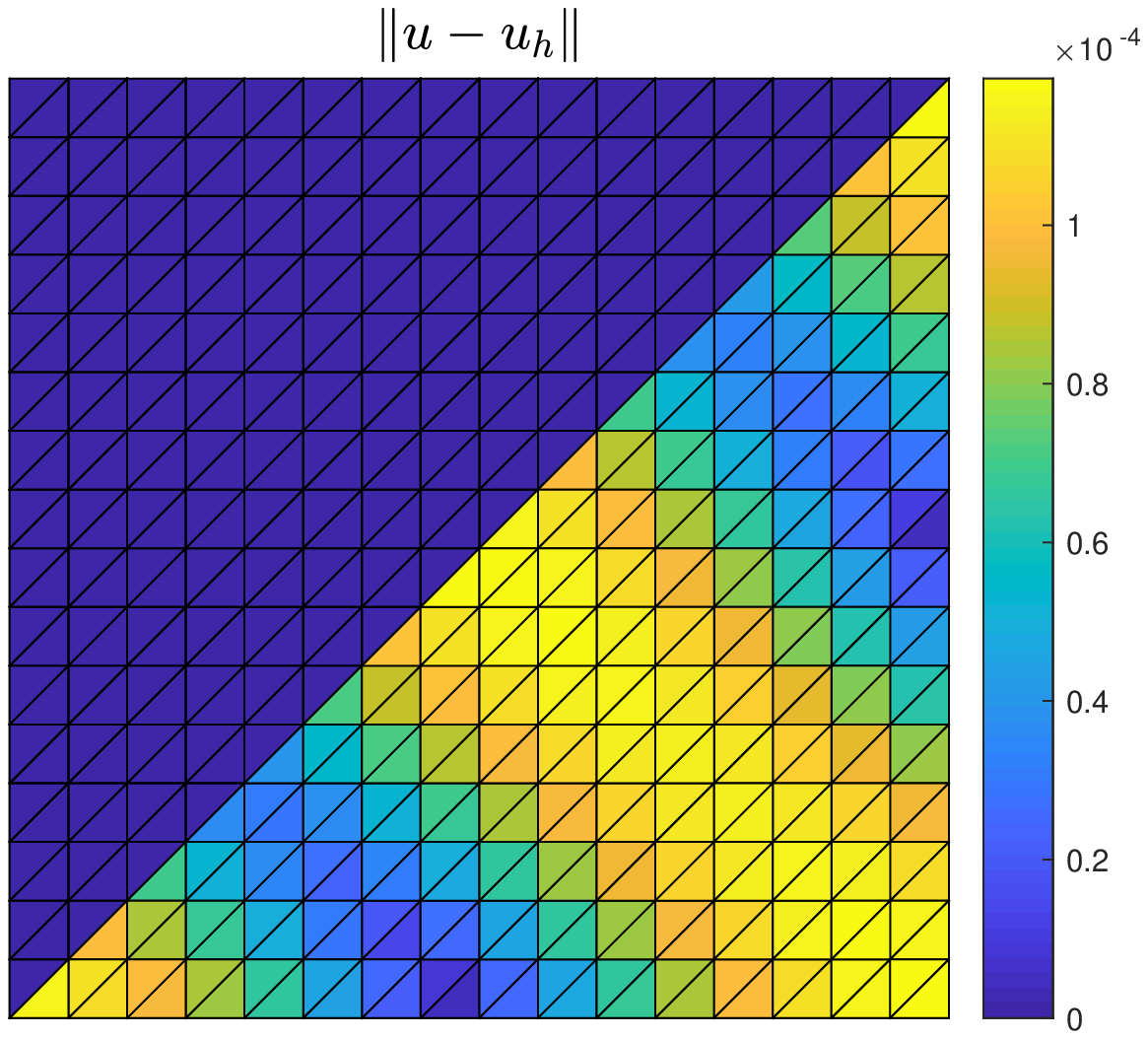}
    \includegraphics[width=.49\textwidth]{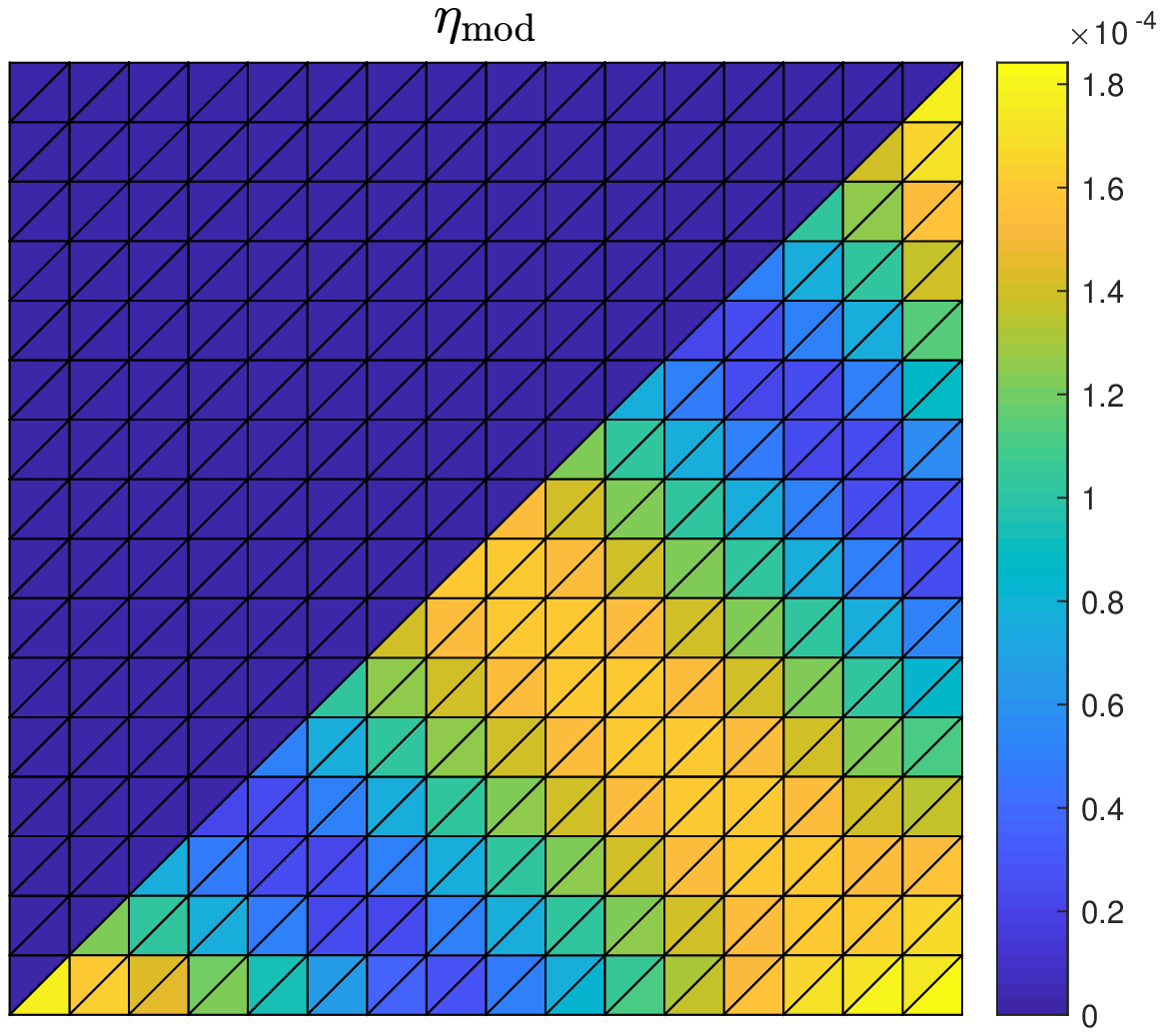}
    \caption{$\poldeg = 1,  \poldegp= 2$}
    \label{fig:trerr1_disc}
  \end{subfigure}
  \\
  \begin{subfigure}[b]{\textwidth}
    \includegraphics[width=.49\textwidth]{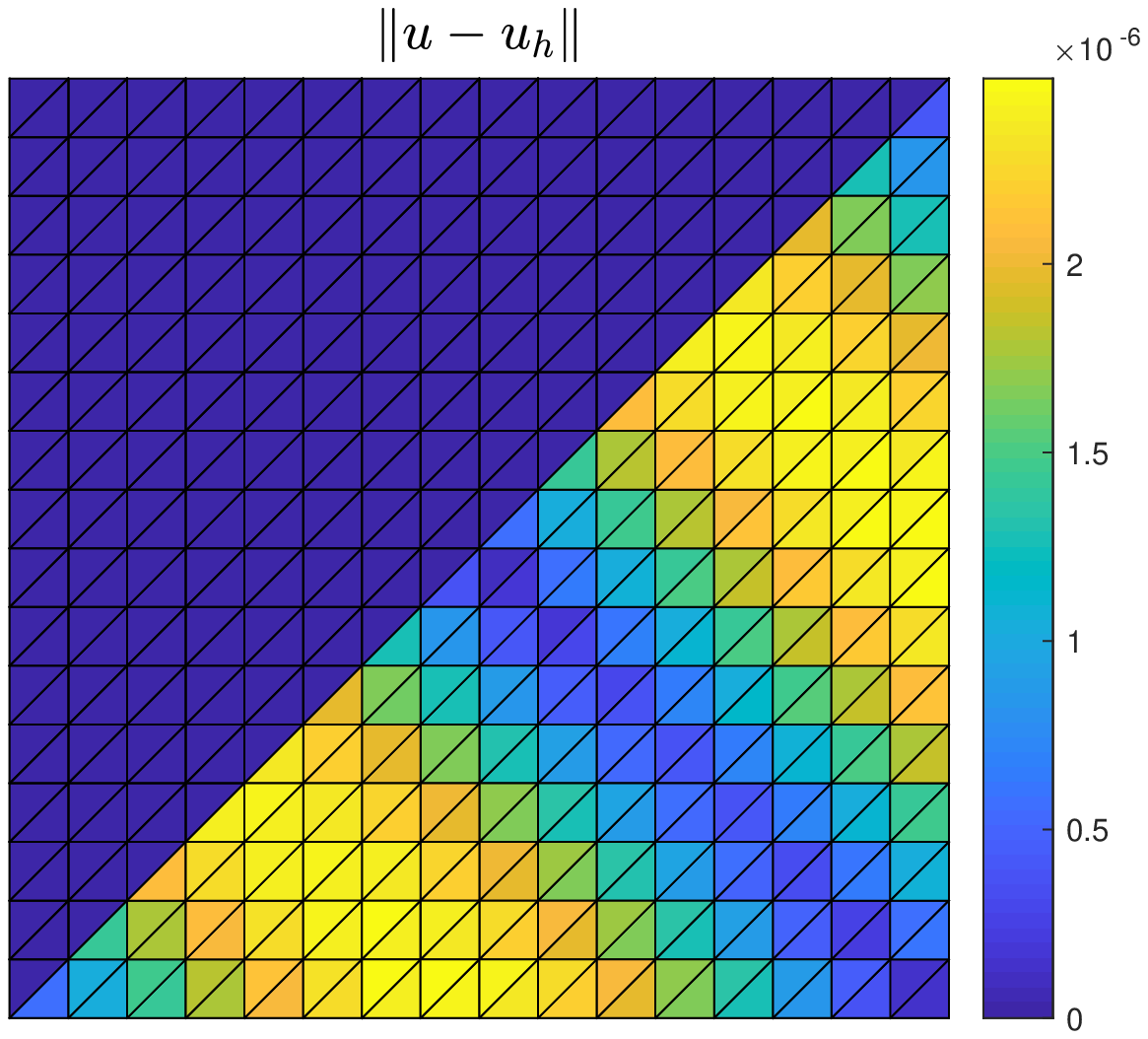}
    \includegraphics[width=.49\textwidth]{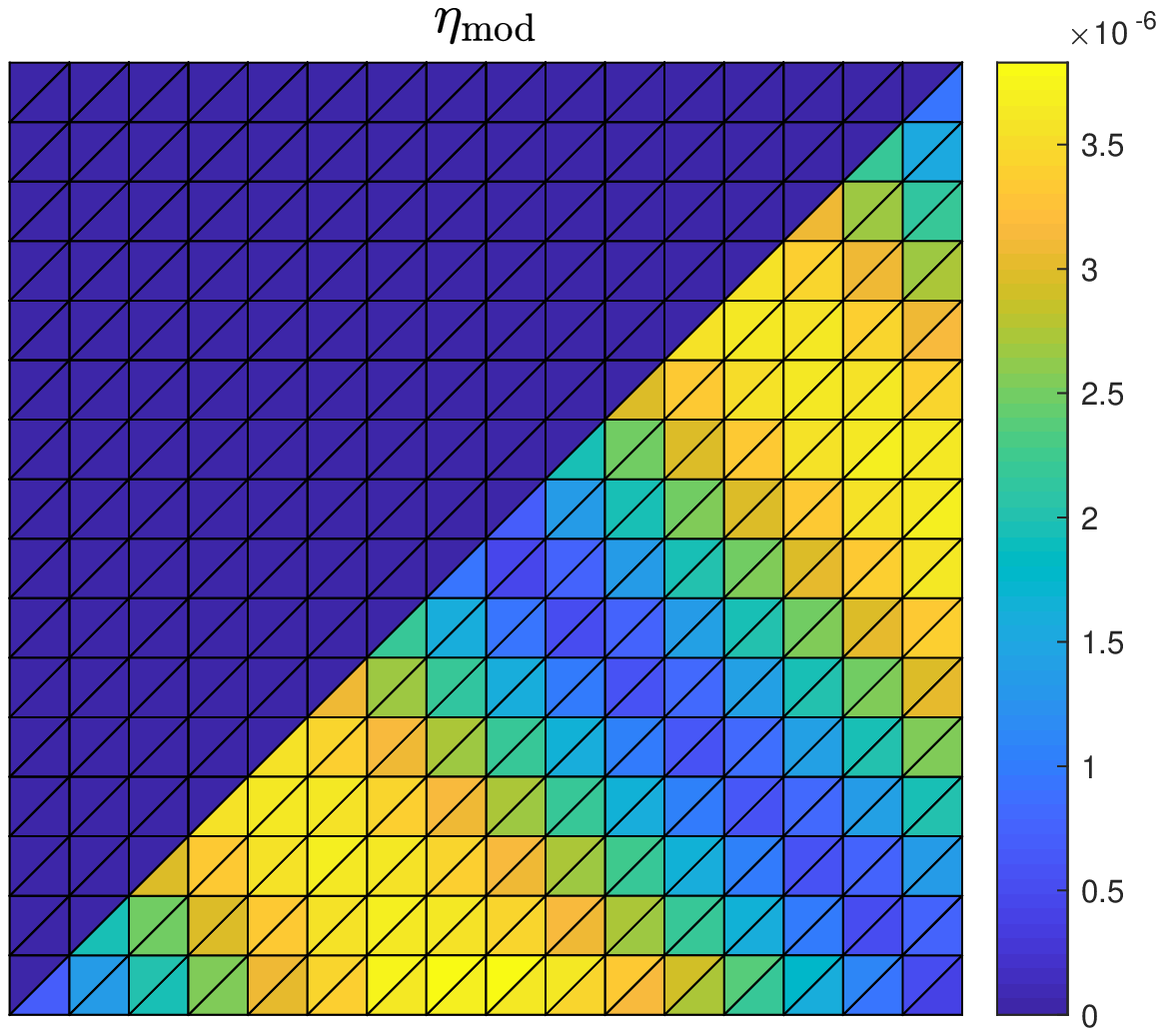}
    \caption{$\poldeg =2, \poldegp= 3$}
    \label{fig:trerr2_disc}
  \end{subfigure}
  \caption{{Distribution of the errors $\Ltwonorm{\uu - \uh}{\el}$ (left) and of the local error estimators $\Err_{\mathrm{mod},\el}$ (right) for the {\DG} method~\eqref{Eq::DG} with $512$ elements; discontinuous solution~\eqref{eq_sol_disc}, $\vel=(1, 1)^{\mathrm{t}}$, and different polynomial degrees $\poldeg$}}

  \label{Fig::Error_DG_MD_disc}
\end{figure}

\section{Conclusions}
\label{sec:conclusions}
In this work, we proposed a local reconstruction for numerical approximations of the one-dimensional linear advection equation, easily and independently obtained on each vertex patch. The reconstruction is proved to be well-posed and leads to a guaranteed upper bound of the $\Ltwo$-norm error between the actual solution $\uu$ and the approximation $\uh$. This error estimator is also proved to be locally efficient and robust with respect to both the advective field and the approximation polynomial degree.
These results hold in a unified framework that only requires the residual of $\uh$ to satisfy an orthogonality condition with respect to the hat basis functions. Numerical illustrations support the theory and additionally suggest asymptotic exactness.
Motivated by these results, a heuristic extension to any space dimension is presented, with numerical experiments in 2D being in line with those in 1D.

\bibliographystyle{siam}
\bibliography{biblio}
\end{document}